\documentclass[11pt]{article}
% style
\pagestyle{empty}
\textheight = 9.03in \textwidth = 6.0in \headsep
= 0.0in \headheight = 0.0in \topmargin = 0.3in
\oddsidemargin=0.1in \evensidemargin=0.1in
\def\Dj{\hbox{D\kern-.73em\raise.30ex\hbox{-}
\raise-.30ex\hbox{}}}
\def\dj{\hbox{d\kern-.33em\raise.80ex\hbox{-}
\raise-.80ex\hbox{\kern-.40em}}}
\usepackage{epsfig}
\usepackage{amsmath,amsthm,amsfonts,amssymb,amscd,cite}
\allowdisplaybreaks

\usepackage{color}
\usepackage{float}
\usepackage{graphicx}
\usepackage{subfigure}
\usepackage{mathtools}
\pagestyle{empty}
\usepackage{blkarray}
\usepackage{xcolor}
\usepackage{multirow}
\usepackage{rotating}
\usepackage{hhline}
\usepackage{array}
\usepackage{tikz}

\usepackage[ruled,linesnumbered]{algorithm2e}
\usepackage{caption}
\usepackage{tgtermes}
\usepackage[T1]{fontenc}
\usepackage{tipa}
\usepackage{multirow}
\usepackage{graphicx}
\usepackage{booktabs}
\usepackage{multirow}
\usepackage{mathrsfs}
\allowdisplaybreaks
\usepackage{mathrsfs}

\usepackage{color}
\usepackage{float}
\usepackage{graphicx}
\usepackage{mathtools}
\pagestyle{empty}
\usepackage{blkarray}
\usepackage{xcolor}
\usepackage{multirow}
\usepackage{rotating}
\usepackage{hhline}
\usepackage{array}
\usepackage{tikz}
\usepackage{array}\newcolumntype{C}[1]{>{\centering\arraybackslash}p{#1}}
% Theorems, Propositions, ...
\newtheorem{theo}{Theorem}[section]

\newtheorem{lemma}[theo]{Lemma}

\newtheorem{coro}[theo]{Corollary}

\newtheorem{problem}[theo]{Problem}

\newtheorem{rem}{Remark}
\newtheorem{defi}[theo]{Definition}

\newcolumntype{C}[1]{>{\centering\arraybackslash}p{#1}}

\begin{document}

\pagestyle{plain}

\title{Extremal graphs  for the maximum $A_{\alpha}$-spectral radius of graphs with order and size }

\author{Jie Zhang$^a$\thanks{ J. Zhang is partly supported by National Natural Science Foundation of China (No. 12371349)}, Ya-Lei Jin$^b$\thanks{Y.-L. Jin partly supported by National Natural Science Foundation of China (Nos. 12371349 and 12471331)}, Hua Wang$^c$, Jin-Xuan Yang$^d$\thanks{J.-X Yang is partly supported by National Natural Science Foundation of China (No. 12361073)}, Xiao-Dong Zhang$^e$\thanks{Corresponding author. E-mail addresses:
xiaodong@sjtu.edu.cn.  X.-D. Zhang is partly supported by National Natural Science Foundation of China (Nos. 12371354), the Science and Technology Commission of Shanghai Municipality (No. 22JC1403600) and  the Montenegrin-Chinese Science and Technology
Cooperation Project (No. 4-3).}
\\{\small \it  a.  School of Insurance, Shanghai Lixin University of Accounting and Finance, }\\{\small \it Shanghai, 201209, P. R. China.}
\\{\small \it  b. Department of Mathematics, Shanghai Normal University, }\\{\small \it Shanghai 200234, P. R. China}
\\{\small \it  c. Department of Mathematical Sciences,  Georgia Southern University, }\\{\small \it Statesboro, GA 30460, USA}
\\{\small \it  d. School of Statistics and Mathematics, Yunnan University of Finance and Economics, }\\{\small \it Kunming, 650221, P.R. China,}
\\{\small \it  e. School of Mathematical Sciences, MOE-LSC and SHL-MAC, }\\{\small \it Shanghai Jiao Tong University, }\\{\small \it
800 Dongchuan Road, Shanghai 200240,  P.R. China}\\
%Email: zhangjie.sjtu@163.com, yaleijin@shnu.edu.cn,   hwang@georgiasouthern.edu,yangjinxuan_2007@163.com,  xiaodong@sjtu.edu.cn
}

\date{}
\maketitle

\begin{abstract}
 In 1986, Brualdi and Solheid  firstly proposed the problem of determining the  maximum spectral radius of graphs in   the set $\mathcal{H}_{n,m}$ consisting of all simple connected graphs with $n$ vertices and $m$ edges, which  is a very tough problem and  far from resolved. The $A_{\alpha}$-spectral radius of  a simple graph  of order $n$, denoted by $\rho_\alpha(G)$, is the largest eigenvalue of the matrix
   $A_{\alpha}(G)$ which is defined as  $\alpha D(G)+(1-\alpha)A(G)$ for $0\le \alpha< 1$, where  $D(G)$ and $A(G)$ are the degree diagonal and adjacency  matrices of $G$, respectively.  In this paper, if  $r$ is a positive integer, $n>30r$ and $n-1\leq m \le rn-\frac{r(r+1)}{2}$, we characterize all extremal graphs  which have the maximum  $A_{\alpha}$-spectral radius of graphs in   the set $\mathcal{H}_{n,m}$. Moreover, the problem  on $A_{\alpha}$-spectral radius proposed by
   Chang and Tam [T.-C. Chang and B.-T. Tam,  Graphs of fixed order and size with maximal $A_{\alpha}$-index. Linear Algebra Appl. 673 (2023), 69-100] has been solved.

\end{abstract}
\vspace{3mm}

\noindent {\bf Keywords}: Brualdi-Solheid problem, $A_{\alpha}$-spectral radius, extremal graph, threshold graph.
 \smallskip

\noindent {\bf AMS subject classification 2020}: 05C50, 05C35.

\maketitle

\section{Introduction}
\label{sec:intro}

In this paper, we only consider  finite undirected  graphs without multiple edges or loops. Let $G$ be a simple graph with vertex set $V(G)=\{v_1,v_2,\ldots,v_n\}$ and edge set $E(G)$. Denote by $|V(G)|=n$ and $|E(G)|=m$ the number of vertices and edges of G, respectively. Let $N_G(u)$ be the set of vertices which are adjacent to $u$ in $G$,
if there is no ambiguous, we denote by $N(u)$ for simplify.  Moreover, denote by $d_i=d_G(v_i)$ the degree of vertex $v_i$ for $i=1, 2, \ldots, n$.
 For two distinct vertices $u, v\in V(G)$, if $u$ is adjacent to $v$ then denote  $u\sim v$,  or for short $uv$;
if $u$ is not adjacent to $v$ then denote  $u\not\sim v$. For $uv\not\in E(G)$, let $G+uv$ be the graph which is obtained from $G$ by adding the new edge $uv$.
For $uv\in E(G)$, let $G-uv$ be the graph which is obtained from $G$ by deleting the edge $uv$.

  The adjacency matrix of  a simple graph $G$ is the $n \times n$ matrix $(a_{ij})_{n\times n}$, where $a_{ij}=1$ if
$v_i$ is adjacent to $v_j$, and 0 otherwise. Moreover, the largest eigenvalue of $A(G)$ is usually called the
spectral radius of $G$.   For given two positive integers $n$ and $m\ge n-1$, let $\mathcal{H}_{n,m}$ be the set of all connected graphs of order $n$ with size $m$,  and let  $\mathcal{G}_{n,m}$ be the set of all  graphs of order $n$ with size $m$. %i.e., $\mathcal{H}_{n,m} =\{G:  G \mbox{ is connected  with}\ |V(G)|=n, |E(G)|=m\}$.
  In 1986, Brualdi and Solheid \cite{bruadis} firstly proposed the following  problem:

%Brualdi and Solheid \cite{bruadis} considered the following problem:

\begin{problem}\cite{bruadis}\label{que1}
 For given two positive integers $n$ and $m\ge n-1$, determine  the maximum spectral radius  of graphs in  $\mathcal{H}_{n,m} $ and characterize all extremal graphs which attain the maximum value.
\end{problem}

Further, for $n\le m\le n+5$,  Brualdi and Solheid \cite{bruadis}  characterized all graphs which have  the maximum spectral radius in  $\mathcal{H}_{n,m}$.  Later, in 1988, for a given positive integer $r\ge 3$ and $m=n+r$, Cvetkovi\'c and Rowlinson \cite{cvet} proved that $S_{n, m}$ is the unique graph with maximum spectral radius in $\mathcal{H}_{n,m} $  for sufficiently large $n$ (the definition of $S_{n,m}$ is given in Section 2).  In 1991, for  $m=n+\binom{r}{2}-1$  with  positive integer $r$,   Bell \cite{bell1} determined all graphs which have maximum spectral radius in  $\mathcal{H}_{n,m} $.    However, up to now,  for given any two integers $n$ and $m\ge n-1$, Problem \ref{que1}  is far from being completely resolved and seems to be very tough.

 In 2017, Nikiforov\cite{niki} introduced the $A_{\alpha}$-matrix of  a simple graph $G$  which is defined to be
$A_{\alpha}(G)=\alpha D(G)+(1-\alpha)A(G)$,
where $\alpha \in [0,1)$. The largest eigenvalues of $A_{\alpha}(G)$ is denoted by $\rho_{\alpha}(G)$, which is called the $A_{\alpha}$-spectral radius of $G$. If $G$ is connected, there exists a unique positive eigenvector corresponding to $\rho_{\alpha}(G)$, which is called the Perron vector of $A_{\alpha}$.
Notice that $A_0(G)=A(G)$ and $A_{\frac{1}{2}}(G)=\frac{1}{2}Q(G)$, where $Q(G)$ is the signless Laplacian matrix of $G$. Hence  Problem \ref{que1} may be generalized to the following problem.

\begin{problem}\label{prob2}
Determine all graphs which have the maximum $A_{\alpha}$-spectral radius in $\mathcal{H}_{n, m}$.
\end{problem}

  Nikiforov, Past\'{e}n, et al. \cite{nikig} proved that the star graph $S_n$ is the unique graph maximizing the $A_{\alpha}$-spectral radius in $\mathcal{H}_{n,n-1}$. Recently, Li, Tam, et.al. in \cite{lity} have made  significant progress on Problem \ref{prob2}.

\begin{theo}\label{t2n-3c}\cite{lity}
Let $n$ and $m$ be two positive integers with $n-1 \le m \le 2n-3$. \\
 {\em (1).} If $\alpha \in (\frac{1}{2},1)$ or $\alpha=\frac{1}{2}$ and $m \neq n+2$, then $S_{n,m}$ is the unique graph that maximizes the $A_{\alpha}$-spectral radius in $\mathcal{H}_{n,m}$.\\
{\em (2).} If $\alpha=\frac{1}{2}$ and $m = n+2$, then $S_{n,n+2}$ and $L_{n,n+2}$ are the two precisely connected graphs that maximize the $A_{\frac{1}{2}}$-spectral radius in $\mathcal{H}_{n,n+2}$ (the definition of  $L_{n,m}$ is given in Section 2).
\end{theo}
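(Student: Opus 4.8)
The plan is to follow the standard two-phase strategy for Brualdi--Solheid type problems: first reduce an arbitrary maximizer to a highly structured family (connected threshold graphs), and then compare the finitely many surviving candidates by analyzing small quotient matrices. Throughout I would use the Perron equation $\rho_\alpha(G)\,x_v = \alpha d_v x_v + (1-\alpha)\sum_{u\sim v} x_u$ and the Rayleigh-quotient form
\[
x^{T}A_\alpha(G)\,x \;=\; \sum_{uv\in E(G)}\bigl[\alpha(x_u^2+x_v^2)+2(1-\alpha)x_u x_v\bigr].
\]

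Phase one (reduction to threshold graphs). Let $G$ maximize $\rho_\alpha$ in $\mathcal{H}_{n,m}$ with positive Perron vector $x$, and order the vertices so that $x_{v_1}\ge x_{v_2}\ge\cdots\ge x_{v_n}>0$. Whenever $i<j$, $w\sim v_j$, $w\not\sim v_i$ and $w\ne v_i$, I would perform the local switch $G'=G-v_jw+v_iw$. By the formula above this changes the numerator of the Rayleigh quotient by $\alpha(x_{v_i}^2-x_{v_j}^2)+2(1-\alpha)x_w(x_{v_i}-x_{v_j})\ge 0$, so $\rho_\alpha(G')\ge\rho_\alpha(G)$ and maximality forces the closed neighborhoods to be nested; that is, $G$ must be a threshold graph. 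Some care is needed to keep $G'$ connected and in $\mathcal{H}_{n,m}$, and to convert the weak inequalities into strict ones for the uniqueness assertions, using the Perron equation to resolve the boundary ties.

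Phase two (enumeration and comparison). Since $m\le 2n-3$, the budget of extra edges over a spanning star is $m-(n-1)\le n-2$, so a connected threshold graph on these parameters has a very restricted creation sequence: it contains a dominating vertex, and deleting it leaves a threshold graph on $n-1$ vertices with only $m-(n-1)$ edges. I would list the resulting candidates explicitly; for this range only $S_{n,m}$ and a handful of competitors (including $L_{n,m}$ when $m=n+2$) remain. Each candidate has many pairwise twin pendant vertices, so its vertex orbits form an equitable partition and $\rho_\alpha$ equals the largest root of a $2\times 2$ or $3\times 3$ quotient matrix. The whole problem then collapses to comparing the largest roots of a few explicit low-degree polynomials in $\rho$, depending on the parameters $\alpha$, $n$, and $m$.

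The main obstacle is precisely this final sign analysis. For $\alpha\in(\tfrac12,1)$ the diagonal degree term is heavily weighted, which rewards concentrating degree on a single vertex and should make $S_{n,m}$ strictly dominant; but as $\alpha$ decreases toward $\tfrac12$ the weighting shifts, and in the borderline case $\alpha=\tfrac12$, $m=n+2$ the competitor $L_{n,n+2}$ catches up \emph{exactly}, producing the tie in part~(2). Isolating the threshold value $\alpha=\tfrac12$ together with the exceptional size $m=n+2$, and establishing strict inequality everywhere else, requires delicate estimates on the characteristic polynomials of the quotient matrices --- typically by evaluating the difference of two such polynomials at a common test value near the dominant root and controlling the remainder term for all admissible $n$. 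This is where I expect the bulk of the technical effort to lie.
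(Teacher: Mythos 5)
Your phase one is fine as far as it goes, but it is not where the content lies: the reduction to connected threshold graphs is exactly Lemma~\ref{threshold}, which this paper simply quotes from \cite{lity} (the same source as the theorem itself). The genuine gap is in your phase two. The claim that after peeling off a dominating vertex ``only $S_{n,m}$ and a handful of competitors remain'' is false. Connected threshold graphs in $\mathcal{H}_{n,m}$ are in bijection with (unlabeled) threshold graphs having $m-n+1$ edges, obtained by joining a dominating vertex to each of them; the number of such graphs equals the number of partitions of $m-n+1$ into distinct parts, and since $m-n+1$ can be as large as $n-2$ in the stated range, the candidate set grows super-polynomially in $n$ and cannot be listed explicitly. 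For the same reason your quotient-matrix step collapses: the equitable partition of a threshold graph into degree classes has as many cells as there are distinct degrees, which is unbounded over this family, so you are not comparing largest roots of a few $2\times 2$ or $3\times 3$ matrices but of arbitrarily large ones. Finally, the decisive assertion --- strict dominance of $S_{n,m}$ for all $\alpha\in[\tfrac12,1)$ except the single tie at $(\alpha,m)=(\tfrac12,\,n+2)$ --- is precisely what you defer to ``delicate estimates'' at the end; that deferred step \emph{is} the theorem, so the proposal does not constitute a proof.

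For comparison, the closest argument in this paper is the proof of Theorem~\ref{t2n-2c} (which covers $n-1\le m\le 2n-2$ and hence this statement's range). It avoids enumeration entirely: assuming a maximizer $G'\neq S_{n,m}$, one first notes $m\ge n+2$ (for $m\le n+1$ the only connected threshold graph is $S_{n,m}$), then applies a single edge-relocation ``Transformation'' to the stepwise adjacency matrix of $G'$, producing $G_1\in\mathcal{H}_{n,m}$ with $\rho_\alpha(G_1)\ge \rho_\alpha(G')$. The inequality is proved not by a Rayleigh quotient with one vector but by pairing the two Perron vectors $x$ of $G'$ and $y$ of $G_1$ in the bilinear form $x^{T}\bigl(A_\alpha(G_1)-A_\alpha(G')\bigr)y$ (Lemmas~\ref{rho}, \ref{trij}, \ref{trg}), which yields an explicit equality condition. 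Maximality of $G'$ forces equality, and that condition pins down $\alpha=\tfrac12$, $m=n+2$ and $G'=\tilde S_{n,n+2}$ ($=L_{n,n+2}$) in one stroke. If you want to salvage your outline, you need a comparison device of this kind (or an induction along the threshold creation sequence) in place of explicit candidate lists and characteristic-polynomial comparisons.
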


   On the other hand,   the problem of determining the maximal $A_{\alpha}$-spectral radius in $ \mathcal{G}_{n, m}$ has attracted much attention.   When $\alpha=0$ (respectively $\alpha=1/2$), the maximal $A_{\alpha}$-spectral radius problem becomes the well-known maximal spectral radius problem (respectively, the maximal $Q$-spectral radius problem).   Whereas the maximal spectral radius  problem over the class $\mathcal {G}_{n,m}$ has been studied by Brualdi and Hoffman \cite{bruadih}, Friedland \cite{fried} and has been completely solved \cite{rowl}. The maximal $Q$-spectral radius in $\mathcal {G}_{n,m}$ has also been investigated by Chang and Tam \cite{changtam}, An\textcrd eli\'{c}  et. al. \cite{ak}, etc. The maximal $A_{\alpha}$-spectral radius problem over the class $\mathcal {G}_{n,m}$ has been treated by  Chang and Tam \cite{changtam2026}, Chen and  Huang \cite{chen}, Li and Qin \cite{liq}, etc.  Recently,  Li and Tam et al \cite{lity} proposed the following problem for $m=2n-2$.
\begin{problem}\cite{changtam} \label{que}
  For given two positive integers $n\ge 4$ and $m=2n-2$, characterize graphs that maximize $A_{1/2}$-spectral radius  of  graphs in $\mathcal{G}_{n,m}$.
\end{problem}

Motivated by the above problems, we have investigated the $A_\alpha$-spectral radius of a simple connected graph of order $n$  with size $m$. The main results can be stated as follows.

\begin{theo}\label{t2n-2-1}
Let $n\ge 4$ and $m = 2n-2$. If  $G'$  is any graph having the maximum  $A_{1/2}$-spectral radius of  graphs in $\mathcal{G}_{n,m}$  which consists of all graphs of order $n$ with size $m$, then $G'\cong K_5 \cup K_1$ for $n=6$, and $G'\cong S_{n,2n-2}$ for $n \neq 6$.
\end{theo}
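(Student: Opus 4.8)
The plan is to pass to the signless Laplacian: since $A_{1/2}(G)=\tfrac12 Q(G)$, maximizing $\rho_{1/2}$ over $\mathcal{G}_{n,2n-2}$ is the same as maximizing $q(G):=2\rho_{1/2}(G)$, the spectral radius of $Q(G)$. First I would shrink the search space. A Kelmans-type shifting never decreases $q$ and turns any graph into a threshold graph while fixing the number of vertices and edges, so an extremizer may be assumed to be a threshold graph; as every threshold graph is the disjoint union of a connected threshold graph with some isolated vertices, this splits the problem into a \emph{connected} case and a \emph{disconnected} case (at least one isolated vertex). Uniqueness will eventually follow from strictness of these shifts together with strictness in the final inequalities.

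In the connected case I would show the maximizer over $\mathcal{H}_{n,2n-2}$ is $S_{n,2n-2}$. With its Section~2 definition this is the complete split graph $K_2\vee\overline{K_{n-2}}$ with one extra edge planted in the independent part, $S_{n,2n-2}\cong K_2\vee(K_2\cup\overline{K_{n-4}})$; partitioning the vertices into the universal pair, the two endpoints of the extra edge, and the remaining $n-4$ leaves is equitable, so $q(S_{n,2n-2})$ is the largest root of an explicit cubic, increasing to $n+2$ from below. For all large $n$ this is exactly the paper's main theorem applied with $r=3$ (legitimate since $2n-2\le 3n-6$ and $n>90$); the remaining small $n$ I would handle directly, adapting the rotation/one-edge-extension argument behind Theorem~\ref{t2n-3c}.

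For the disconnected case, every component is connected on $n_1\le n-1$ vertices with $m_1\le 2n-2$ edges, so by the sharp bound $q\le \frac{2m_1}{n_1-1}+n_1-2$ (equality exactly on complete graphs, with $m_1\le\min\{2n-2,\binom{n_1}{2}\}$) the whole disconnected optimum is at most $B(n):=\max_{n_1\le n-1}\bigl(\tfrac{2m_1}{n_1-1}+n_1-2\bigr)$. The one arithmetic coincidence $2n-2=\binom{5}{2}$ occurs at $n=6$, where $K_5$ fits exactly and realizes $B(6)=q(K_5\cup K_1)=8$; since $q(S_{6,10})$ is the largest root of $q^3-12q^2+36q-24$, namely $\approx 7.77<8$, the disconnected graph genuinely wins and $n=6$ is the unique exception. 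For $n\ge 8$ I would show $B(n)<q(S_{n,2n-2})$ (the binding term in $B(n)$ is the complete-split value $\approx n+1$ at $n_1=n-1$, which stays below $q(S_{n,2n-2})\to (n+2)^-$), while $n=4,5$ carry no disconnected competitor at all; this leaves only $n=7$.

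The hard part is exactly this finite band, and above all $n=7$: there $B(7)=8.8$ exceeds $q(S_{7,12})\approx 8.74$, so the estimate cannot close the gap and one must instead exhibit the exact disconnected extremizer—a $6$-vertex, $12$-edge threshold graph together with an isolated vertex, whose $q\approx 8.61$—and separate $8.61$ from $8.74$ by comparing the roots of the two characteristic cubics \emph{exactly} rather than numerically. The margin is also narrow at $n=8,9$, so a clean proof of $B(n)<q(S_{n,2n-2})$ for all $n\ge 8$ (tightest at $n=8$, then widening) must be secured before the small cases are cleared one by one. Disposing rigorously of each $4\le n\le 7$—pinning down the exact connected and disconnected extremizers and proving a strict inequality between two algebraic numbers—together with the uniqueness bookkeeping, is where essentially all the effort concentrates.
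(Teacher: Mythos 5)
Your proposal is correct and its skeleton is the same as the paper's: reduce to threshold graphs (so any extremal graph is a connected threshold graph plus isolated vertices), kill the disconnected case with the bound $q(G)\le \frac{2m}{n-1}+n-2$ of Lemma~\ref{ue} via a convexity/endpoint analysis, and settle the connected case by the extremal theorem for $\mathcal{H}_{n,m}$. The differences lie in how the two halves are discharged. For the connected case the paper proves one uniform statement (Theorem~\ref{t2n-2c}: for all $n$ and $n-1\le m\le 2n-2$, $S_{n,m}$ is the unique $A_{1/2}$-maximizer in $\mathcal{H}_{n,m}$ when $m\ne n+2$), so it needs neither your appeal to Theorem~\ref{tknc} with $r=3$ for large $n$ nor your ``adaptation'' of Theorem~\ref{t2n-3c} for the remaining $n$; that adaptation is the thinnest step of your plan, since passing from $m\le 2n-3$ to $m=2n-2$ is precisely what required the paper's new transformation machinery (Section~3, e.g.\ Lemma~\ref{trg}), not a routine tweak of the rotation argument. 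For the disconnected case, the paper sidesteps your entire hard band by verifying $n\le 16$ computationally (which absorbs the $n=6$ exception $K_5\cup K_1$ as well as the delicate cases $n=7,8,9$) and running the analytic bound only for $n>16$, where $\max\{\sqrt{16n-15}-1,\ n+1+\tfrac{4}{n-2}\}<n+1.6\le q(S_{n,2n-2})$ holds with room to spare. Your diagnosis of that band is accurate and your numbers check out: $B(7)=8.8>q(S_{7,12})\approx 8.73$, so the crude bound genuinely fails at $n=7$, where the best disconnected competitor is $S_{6,12}\cup K_1=(K_3\vee \overline{K}_3)\cup K_1$ with $q=5+\sqrt{13}\approx 8.61<q(S_{7,12})$; and at $n=8$ the estimate $q\ge n+1.6$ of Lemma~\ref{2n-2} is itself too weak ($B(8)=\tfrac{29}{3}\approx 9.67>9.6$), so one must compare against the actual root $q(S_{8,14})\approx 9.73$. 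So your route closes, but only after the exact algebraic comparisons you describe, which is exactly the work the paper's finite computation replaces.
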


Theorem~\ref{t2n-2-1} fully resolves the  problem  proposed by  Chang and Tam \cite{changtam}.

\begin{theo}\label{tknc}
Let $r$, $n$ and $m$ be three positive integers satisfying $r\ge 3$,  $n >\frac{30r-63+5\sqrt{32r^2-136r+137}}{2}$  and $n-1\le m\le rn-\frac{r(r+1)}{2}$. \\
{\em (1).}  If $\alpha \in (\frac{1}{2}, 1)$,  or $\alpha=\frac{1}{2}$ and $m \neq (r-1)n-\frac{r(r-1)}{2}+3$, then $S_{n, m}$ is the only extremal graph that maximizes the $A_{\alpha}$-spectral radius in $\mathcal{H}_{n,m}$. \\
{\em (2).}  If $\alpha=\frac{1}{2}$ and $m = (r-1)n-\frac{r(r-1)}{2}+3$, then $S_{n, m}$ and $\tilde{S}_{n, m}$ are the  two extremal graphs that  maximize the $A_{\frac{1}{2}}$-spectral radius in $\mathcal{H}_{n,m}$ (the definition of $\tilde{S}_{n, m}$ is given in Section 2).
\end{theo}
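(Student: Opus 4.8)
The plan is to reduce the problem to a bounded comparison among threshold graphs and then to settle that comparison by estimating the characteristic polynomials of small quotient matrices.

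First I would show that any maximizer $G$ of the $A_\alpha$-spectral radius in $\mathcal{H}_{n,m}$ is a connected threshold graph, i.e. its neighborhoods are nested: for all $u,v$ either $N(u)\setminus\{v\}\subseteq N(v)\setminus\{u\}$ or conversely. Let $x$ be the Perron vector of $A_\alpha(G)$. If nestedness fails, there are $u,v$ with $x_u\ge x_v$ and a vertex $a\in N(v)\setminus(N(u)\cup\{u\})$; performing a Kelmans-type shift that moves edges incident to $v$ onto $u$ preserves $n$, $m$ and connectivity, and changes $\rho_\alpha$ by a quantity controlled through the $A_\alpha$-Rayleigh quotient by $(x_u-x_v)$ together with the relevant $x$-entries. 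One checks this shift cannot strictly decrease $\rho_\alpha$ and is strictly increasing unless the neighborhoods were already nested, forcing $G$ to be threshold. Since $S_n$ is optimal for $m=n-1$, and a threshold graph is connected once it has at least one universal vertex, this step is where connectivity must be tracked carefully.

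Next I would localize the structure. A connected threshold graph decomposes into twin classes, yielding an equitable partition with a bounded number of classes, so $\rho_\alpha(G)$ equals the Perron root of a nonnegative quotient matrix $B_\alpha$ of size independent of $n$. Writing $m\in[(s-1)n-\binom{s}{2},\,sn-\binom{s+1}{2})$ for the unique $s\le r$ determined by $m$, I would show that in the maximizer the number of universal vertices is exactly $s-1$, by comparison with $S_{n,m}$: having too few universal vertices pushes many edges into a sparse part whose contribution to $\rho_\alpha$ is provably smaller for large $n$. Peeling off these $s-1$ universal vertices leaves a threshold graph $H$ on $n-s+1$ vertices carrying only $t:=m-[(s-1)n-\binom{s}{2}]\le n-s$ edges. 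Thus the whole optimization reduces to choosing the \emph{sparse} threshold piece $H$, a problem living in essentially the same very sparse regime as the star case, where $H=K_{1,t}$ (which reproduces $S_{n,m}$) is the natural candidate.

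For each admissible $H$, I would expand $\det(\rho I-B_\alpha)$ as a polynomial in $\rho$ with coefficients explicit in $n,m,\alpha$, and compare $H=K_{1,t}$ against every competing sparse threshold piece. Because the dominant balance has $\rho\sim \alpha n$, the leading-order comparison singles out $S_{n,m}$, and the hypothesis $n>\tfrac{30r-63+5\sqrt{32r^2-136r+137}}{2}$ is exactly the threshold ensuring the lower-order error terms cannot overturn this leading term uniformly over the whole range $n-1\le m\le rn-\binom{r+1}{2}$. This yields part (1) for $\alpha\in(\tfrac12,1)$ and for $\alpha=\tfrac12$ away from the exceptional size. At the exceptional value $\alpha=\tfrac12$, $m=(r-1)n-\binom{r}{2}+3$, one has $s=r$ and $t=3$, so the two competing pieces are $K_{1,3}$ (giving $S_{n,m}$) and $K_3$ (giving $\tilde S_{n,m}$); I would verify that their $A_{1/2}$-quotient matrices produce identical characteristic factors, so that $\rho_{1/2}(S_{n,m})=\rho_{1/2}(\tilde S_{n,m})$, while for $\alpha>\tfrac12$ the degree-diagonal term breaks the tie in favor of $S_{n,m}$.

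The main obstacle is the third step: extracting \emph{strict} inequalities uniformly across every $m$ in the range, and pinning down the exact bound on $n$, requires delicate control of the error terms in the polynomial comparison, since the margin by which $S_{n,m}$ beats its nearest competitors shrinks near the endpoints of the range. Correspondingly, the tie at $\alpha=\tfrac12$ must be established as an exact algebraic identity between characteristic polynomials rather than through an estimate, and the asymmetry for $\alpha>\tfrac12$ must be shown to persist for all sufficiently large $n$ in the stated range.
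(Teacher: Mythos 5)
There is a genuine gap at the heart of your plan, and it sits in the step you yourself flag as the main obstacle. You assert that a connected threshold graph decomposes into twin classes ``with a bounded number of classes, so $\rho_\alpha(G)$ equals the Perron root of a nonnegative quotient matrix $B_\alpha$ of size independent of $n$.'' That is false for the competitors you must rule out: a threshold graph in $\mathcal{H}_{n,m}$ can have on the order of $\sqrt{m}$ distinct degrees (take a stepwise adjacency matrix whose staircase profile is a strictly decreasing staircase, then join a dominating vertex to keep it connected), hence $\Theta(\sqrt{n})$ twin classes even in the sparse band $m\le 2n-2$. The same is true of the residual piece $H$ after peeling off universal vertices. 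So your third step --- expanding $\det(\rho I-B_\alpha)$ and comparing $K_{1,t}$ ``against every competing sparse threshold piece'' --- is not a finite comparison of bounded-degree polynomials; it is the original Brualdi--Solheid-type extremal problem in disguise, over quotient matrices of unbounded size. Your second step has the same character: the claim that a maximizer has exactly $s-1$ universal vertices is only sketched (``pushes many edges into a sparse part whose contribution is provably smaller''), and even granting it, maximizing the Perron root of $A_\alpha(K_{s-1}\vee H)$ over all threshold pieces $H$ with $t\le n-s$ edges is an extremal problem of essentially the same difficulty as the one you started with. The proposal defers the core difficulty rather than resolving it.

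For contrast, the paper never performs a global polynomial comparison. After invoking the threshold property of maximizers (Lemma~\ref{threshold}, which your first step re-derives), it designs local edge-relocation transformations on the stepwise adjacency matrix (Definitions~\ref{def0}, \ref{def}, \ref{def2}) and proves via the bilinear form $x^{T}\bigl(A_\alpha(G')-A_\alpha(G)\bigr)y$ of the two Perron vectors that each transformation weakly increases $\rho_\alpha$ for $\alpha\in[1/2,1)$, with equality only in one rigid configuration ($\alpha=\tfrac12$, $p=h+1=q+3$); see Lemmas~\ref{trij}, \ref{trg}, \ref{trg1}. Applying these to a hypothetical maximizer $G'\neq S_{n,m}$ forces strong constraints on the multiplicities $\delta_j$ of the small degrees (Lemmas~\ref{lemma12}, \ref{lemma13}, \ref{lemma14}, \ref{lemma15}), and a counting argument (Lemma~\ref{lemma18}) shows that if the pattern $\delta_h>0$, $\delta_{h+1}=0$, $\delta_{h+2}>0$ never occurred, then $n\le 5\kappa-14\le\frac{30r-63+5\sqrt{32r^2-136r+137}}{2}$, contradicting the hypothesis on $n$. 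This is where the curious bound on $n$ actually comes from --- a combinatorial counting contradiction --- not, as you conjecture, from controlling lower-order error terms in a characteristic-polynomial comparison; that mismatch is further evidence your route would not reproduce the theorem with the stated hypothesis. The one part of your plan that is sound and consistent with the paper is the endgame: $\rho_{1/2}(S_{n,m})=\rho_{1/2}(\tilde S_{n,m})$ at $m=(r-1)n-\frac{r(r-1)}{2}+3$ is an exact identity, and for $\alpha>\tfrac12$ the equality analysis in the transformation lemmas breaks the tie in favor of $S_{n,m}$.
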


The rest of this paper is organized as follows.  In Section 2, we introduce some definitions and some known results  which  are useful in this paper. In Section 3, several new graph transformations of $A_{\alpha}$-spectral radius are proposed  which will be interesting on their own.  In Section 4, based on these  transformations,  the proofs of  Theorems \ref{t2n-2-1} and \ref{tknc} are presented.

\section{Preliminaries }

In this section, we introduce some notations and some known results which will be used later.
Let $H_1$ and $H_2$ be two disjoint graphs. Denote by $H_1\bigcup H_2$ the sum of $H_1$ and $H_2$, where $V(H_1\bigcup H_2)=V(H_1)\bigcup V(H_2), E(H_1\bigcup H_2)=E(H_1)\bigcup E(H_2)$.
Denote by $H_1\bigvee H_2$  the product of $H_1$ and $H_2$, obtained by adding all edges between $H_1$ and $H_2$, i.e.
$V(H_1\bigvee H_2)=V(H_1)\bigcup V(H_2)$, the edges set of $H_1\bigvee H_2$ consisting of $E(H_1)\bigcup E(H_2)$ and $\{uv\}$ for each $u\in V(H_1)$ and $v\in V(H_2)$.
Moreover, denote by $K_n$ the complete graph of order $n$,  $K_{1,n-1}$ the star of order $n$, $\bar{K}_{n}$(or $nK_1$) the graph consisting $n$ isolated vertices. In particular, $K_{1,0}$ has only one isolated vertex.

 For two positive integers $n$, $m$ with  $n-1\le m$,  let  $k$ be the largest integer such that $m\geq \sum_{i=1}^{k}(n-i)$ and ${a}=m-\sum_{i=1}^{k}(n-i)$.  The graph  $S_{n,m}$ of order $n$ with size $m$  is defined to be
   $$S_{n,m}=K_{k}\bigvee \left(K_{1, a}\bigcup (n-a-k-1)K_1\right),$$
    which is called quasi-star graph.
 Clearly, if $m=n-1$, then $S_{n,m}$ is the star $K_{1, n-1}$;  if $m=n(n-1)/2$, then $S_{n,m}$ is the complete graph $K_n$.
 In  addition, if  $m = kn-\frac{k(k+1)}{2}+3$, then $a=3$ and   the graph $\tilde{S}_{n, m}$ of order $n$ with size $m$ is defined to be
  $$\tilde{S}_{n, m}=K_{k}\bigvee \left(K_3\bigcup (n-k-3)K_1\right).$$
On the other hand, let  $\bar{k}$  be the largest integer such that $m-n+1\geq \sum_{i=1}^{\bar{k}-1}i$ and $\bar{a}=m-n+1-\sum_{i=1}^{\bar{k}-1}i$.
The graph $L_{n,m}$  of order $n$ with size $m$ is defined to be
$$L_{n,m}=\left \{ \begin{array}{ll}
\left(K_{\bar{k}}\bigcup (n-\bar{k}-1)K_1\right)\bigvee K_1, & {  \mbox{for  } \bar{a}=0};\\
\left(K_{\bar{a}}\bigvee \left( K_{\bar{k}-\bar{a}}\bigcup K_{1}\right)\bigcup (n-\bar{k}-2)K_1\right)\bigvee K_1, & {   \mbox{for } \bar{a}> 0  }.
 \end{array}\right.$$
%$L_{n,m}=\left(K_{k}\cup (n-k-1)K_1\right)\vee K_1$  for $a=0$,
% and\\ $L_{n,m}=\left(K_{a}\vee \left( K_{k-a-1}\cup K_{1}\right)\cup (n-k-1)K_1\right)\vee K_1$ for $a> 0$.
%, where $k$ is the largest integer such that
%$m\geq \sum_{i=1}^{k}(n-i)$ and $a=m-\sum_{i=1}^{k}(n-i)$.
 For example, if $n=6$ and $m=10$, then $S_{6,10}$ and $L_{6,10}$ are depicted in Figure \ref{fig:11}, where $k=2$, ${a}=1$;  $\bar{k}=3$, $\bar{a}=2$.
\begin{figure}[htbp]
\centering  %
\subfigure[ $S_{6,10}$]{
\begin{minipage}{3cm}
\centering
\begin{tikzpicture}[scale=.8]
        \node[fill=white,circle,inner sep=1pt] (t1) at (-2,0) {1};
        \node[fill=white,circle,inner sep=1pt] (t2) at (2,0) {2};
		\node[fill=white,circle,inner sep=1pt] (t3) at (-1,1) {3};
        \node[fill=white,circle,inner sep=1pt] (t4) at (1,1) {4};
        \node[fill=white,circle,inner sep=1pt] (t5) at (-1,-1) {5};
        \node[fill=white,circle,inner sep=1pt] (t6) at (1,-1) {6};

        \draw (t1)--(t2) (t1)--(t3) (t1)--(t4) (t1)--(t5) (t1)--(t6);
        \draw  (t2)--(t6) (t2)--(t4) (t2)--(t3) (t2)--(t5) (t3)--(t4);
        \end{tikzpicture}
%\caption{Graph S(6,9)}
\end{minipage}
}
\subfigure[ $L_{6,10}$]{ %
\begin{minipage}{5cm}
\centering    %
\begin{tikzpicture}[scale=.8]
        \node[fill=white,circle,inner sep=1pt] (t1) at (-2,0) {1};
        \node[fill=white,circle,inner sep=1pt] (t2) at (2,0) {2};
		\node[fill=white,circle,inner sep=1pt] (t3) at (-1,1) {3};
        \node[fill=white,circle,inner sep=1pt] (t4) at (1,1) {4};
        \node[fill=white,circle,inner sep=1pt] (t5) at (-1,-1) {5};
        \node[fill=white,circle,inner sep=1pt] (t6) at (1,-1) {6};

        \draw (t1)--(t2) (t1)--(t3) (t1)--(t4) (t1)--(t5) (t1)--(t6);
        \draw  (t2)--(t5) (t2)--(t4) (t2)--(t3) (t3)--(t4) (t3)--(t5);
        \end{tikzpicture}
%\caption{Graph S(6,9)}
\end{minipage}
}
\caption{ $S_{n,m}$ and $L_{n,m}$ with $n=6$, $m=10$}
\label{fig:11}
\end{figure}
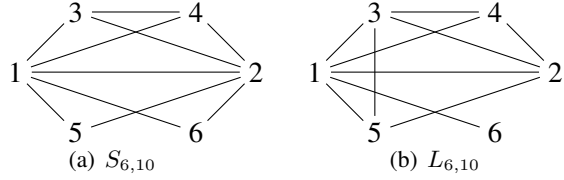

%%定义$\tilde{S}_{n, m}$ 并且画图

A simple graph $G = (V, E)$ is called a threshold graph if $G$ has no induced subgraph of the forms $2K_2$, $C_4$, $P_4$. Clearly, $S_{n,m}$ and $L_{n,m}$ are threshold graphs.
Threshold graphs have a beautiful structure and possess many important mathematical properties such as being the extreme cases of certain graph
properties (see \cite{peled1999}). For more information on threshold graphs, the reader may refer to the monograph \cite{mp}. We  state the following characterizations of threshold graphs which are useful in this paper.
\begin{lemma} \label{1} \cite{peled1999}  Let $G=(V, E)$ be a simple graph with degree sequence $(d_1, d_2,\ldots, d_n)$.
The following statements are equivalent:
\\ {\em (1).}  $G$ is a threshold graph.
\\ {\em (2).} $G$ can be constructed from the one-vertex graph by repeatedly adding an isolated vertex or a universal ( or dominating ) vertex ( a vertex adjacent to every other vertex ).
\\ {\em (3).} Every three distinct vertices $i, j, k$ of $G$ satisfy:
if $d_i \ge d_j$ and $jk$ is an edge, then $ik$ is an edge.
\end{lemma}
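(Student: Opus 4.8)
The plan is to establish the three-way equivalence by a cycle of implications, namely $(1)\Rightarrow(3)\Rightarrow(2)\Rightarrow(1)$, organized around one structural lemma: in a graph with no induced $2K_2$, $C_4$, or $P_4$, the neighborhoods are \emph{nested}, meaning that for any two distinct vertices $x,y$ one has $N(x)\setminus\{y\}\subseteq N(y)$ or $N(y)\setminus\{x\}\subseteq N(x)$. This nestedness is the linchpin: statement (3) is essentially the degree-sorted form of it, while the constructive statement (2) is what nestedness buys once we can peel off extreme vertices.

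First I would prove the nestedness lemma, which is the technical heart. Suppose it fails for some pair $x,y$; then there exist $p\in N(x)$ with $p\notin N(y)$ and $p\ne y$, and $q\in N(y)$ with $q\notin N(x)$ and $q\ne x$. A quick check shows $x,y,p,q$ are four distinct vertices with $xp,yq\in E$ and $xq,yp\notin E$, while the two pairs $xy$ and $pq$ are unconstrained. Splitting into the four cases according to whether $xy$ and $pq$ are edges, the subgraph induced on $\{x,y,p,q\}$ is a $C_4$ (both edges present), a $P_4$ (exactly one present), or a $2K_2$ (neither present) --- in every case a forbidden configuration, a contradiction. With nestedness in hand, $(1)\Rightarrow(3)$ is a short degree comparison: given $d_i\ge d_j$ and an edge $jk$ with $k\ne i$, nestedness applied to $i,j$ must orient so that $N(j)\setminus\{i\}\subseteq N(i)$ (a counting argument rules out the opposite orientation unless the two neighborhoods coincide, which suffices), whence $k\in N(i)$ and $ik\in E$.

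Next, for $(3)\Rightarrow(2)$ I would argue that a graph satisfying (3) always has an isolated or a universal vertex. Take a vertex $u$ of minimum degree. If $u$ is isolated we are done; otherwise pick a neighbor $a$ of $u$. For any $v\notin\{u,a\}$ we have $d_v\ge d_u$, so (3) applied to $i=v$, $j=u$, $k=a$ forces $va\in E$; together with $ua\in E$ this makes $a$ adjacent to every other vertex, i.e.\ $a$ is universal. Deleting such an extreme vertex preserves (3): removing an isolated vertex changes nothing, while removing a universal vertex lowers every remaining degree by exactly one and so preserves both the degree order and all adjacencies among the survivors. Induction on $|V|$ then yields the construction sequence of (2). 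Finally $(2)\Rightarrow(1)$ follows by induction on the construction: none of $2K_2$, $C_4$, $P_4$ has, on its four vertices, a vertex of degree $0$ or of degree $3$; hence a newly added isolated vertex (degree $0$ in any induced $4$-set) or universal vertex (degree $3$ in any induced $4$-set) cannot belong to an induced copy of a forbidden graph, so no such copy is created, closing the cycle.

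The main obstacle is the nestedness lemma together with the subtlety that statement (3) is phrased using \emph{global} degrees $d_i,d_j$ in $G$, whereas the forbidden subgraphs in (1) are a \emph{local}, induced condition; bridging the two requires the degree-counting step that fixes the direction of the nesting, and, in the induction for $(3)\Rightarrow(2)$, the observation that deleting a universal vertex shifts all degrees uniformly and therefore respects the ordering implicit in (3). Everything else is routine bookkeeping over the (at most four) cases and the two vertex-deletion types.
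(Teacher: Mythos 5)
The paper offers no proof of this lemma: it is imported verbatim from \cite{peled1999} as a known characterization of threshold graphs, so there is no in-paper argument to compare yours against; what follows is an assessment of your proof on its own terms. It is correct. The nestedness lemma is proved properly: with $xp,yq\in E$ and $xq,yp\notin E$, the four vertices are indeed distinct ($p\neq q$ because $p\in N(x)$ while $q\notin N(x)$), and the four cases on the pairs $xy$, $pq$ give an induced $C_4$, $P_4$, $P_4$, $2K_2$ respectively, contradicting (1). The one step you leave compressed, the counting argument in $(1)\Rightarrow(3)$, does close: if the nesting goes the ``wrong'' way, i.e.\ $N(i)\setminus\{j\}\subseteq N(j)$, then since $i\notin N(i)$ we in fact have $N(i)\setminus\{j\}\subseteq N(j)\setminus\{i\}$; both sides differ from $d_i$ and $d_j$ by the same correction (one if $ij\in E$, zero otherwise), so $d_i\le d_j$, and with the hypothesis $d_i\ge d_j$ the inclusion becomes an equality of finite sets of the same size, whence $k\in N(j)\setminus\{i\}=N(i)\setminus\{j\}$ gives $ik\in E$. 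The remaining implications are also sound: in $(3)\Rightarrow(2)$, a minimum-degree vertex $u$ is either isolated or any neighbor $a$ of it is universal (apply (3) with $i=v$, $j=u$, $k=a$ for each $v\notin\{u,a\}$), and deleting an isolated or universal vertex preserves (3) because degrees shift uniformly and adjacencies among survivors are untouched, so induction applies; in $(2)\Rightarrow(1)$, every vertex of $2K_2$, $C_4$, $P_4$ has degree $1$ or $2$ within that subgraph, so an induced forbidden copy can never contain the most recently added vertex, which has degree $0$ or $3$ in any induced $4$-vertex set containing it, hence no forbidden copy is ever created. This cycle $(1)\Rightarrow(3)\Rightarrow(2)\Rightarrow(1)$ is the standard proof of the theorem and would serve as a legitimate self-contained substitute for the citation.
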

\begin{lemma} \label{54} \cite{hk}
Any threshold graph  is uniquely determined by  its degree sequence.
\end{lemma}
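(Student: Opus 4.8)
The plan is to prove that a threshold graph is uniquely determined by its degree sequence by exploiting the inductive construction from part~(2) of Lemma~\ref{1}: every threshold graph arises from a single vertex by repeatedly adding either an isolated vertex or a dominating vertex. The key observation is that, at each stage of this construction, the \emph{type} of the last vertex added (isolated versus dominating) is forced by the degree sequence, so the entire construction sequence—and hence the graph—is recoverable from the degrees alone. I would set up an induction on the order $n$ of the graph, the base case $n=1$ being trivial since a single vertex has the unique degree sequence $(0)$.

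First I would establish the crucial extremal fact: in any threshold graph $G$ on $n\ge 2$ vertices, there is a vertex that is either isolated (degree $0$) or dominating (degree $n-1$). This follows directly from part~(2) of Lemma~\ref{1}, since the last vertex added in the construction is exactly of one of these two types. Next I would argue that the degree sequence tells us which case we are in and, moreover, identifies the vertex to peel off. Specifically, order the degrees as $d_1\ge d_2\ge\cdots\ge d_n$. If $d_1=n-1$, then $v_1$ is a dominating vertex; if $d_n=0$, then $v_n$ is isolated. The content to check is that these two possibilities cannot both fail, and that when both could in principle hold, either choice leads to the same reconstruction. The former is guaranteed by the extremal fact just stated.

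The heart of the argument is the peeling step. Suppose $G$ and $G'$ are two threshold graphs with the same degree sequence $(d_1,\ldots,d_n)$. By the extremal fact, each contains a dominating or isolated vertex; using the sorted degree sequence I would show both graphs admit a vertex of the \emph{same} type to remove. If $d_1=n-1$, remove a dominating vertex $u$ from $G$ and $u'$ from $G'$; the resulting graphs $G-u$ and $G'-u'$ are again threshold graphs (an induced subgraph of a threshold graph is threshold, since the forbidden subgraphs $2K_2$, $C_4$, $P_4$ are hereditary), and each has degree sequence obtained from $(d_1,\ldots,d_n)$ by deleting $d_1$ and subtracting $1$ from every remaining entry—so $G-u$ and $G'-u'$ have \emph{identical} degree sequences. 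Symmetrically, if instead $d_n=0$, remove an isolated vertex from each, and the two reduced graphs again share a degree sequence (namely $(d_1,\ldots,d_{n-1})$). By the induction hypothesis $G-u\cong G'-u'$, and restoring the dominating (resp.\ isolated) vertex, which attaches to \emph{all} (resp.\ \emph{none}) of the remaining vertices in a canonical way, yields $G\cong G'$.

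The main obstacle is the well-definedness of the peeling step when the degree sequence permits \emph{both} a dominating vertex ($d_1=n-1$) and an isolated vertex ($d_n=0$) simultaneously, or when there are several vertices of the chosen extremal type. I would handle the first by fixing a deterministic convention—always peel a dominating vertex when one exists, otherwise peel an isolated vertex—and then verifying this convention is consistent: a dominating vertex is adjacent to every isolated vertex, which is impossible unless no isolated vertex exists, so in fact $d_1=n-1$ and $d_n=0$ cannot coexist for $n\ge 2$, eliminating the ambiguity entirely. For the second, any two dominating vertices are interchangeable by the symmetry of their neighborhoods (both adjacent to all others), and likewise any two isolated vertices, so the isomorphism type of the peeled graph is independent of the choice; this is where part~(3) of Lemma~\ref{1} is convenient, as it shows vertices of equal degree in a threshold graph have nested (here, equal) neighborhoods and are therefore related by an automorphism. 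Assembling these observations completes the induction.
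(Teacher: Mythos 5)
Your proof is correct. Note that the paper does not prove this lemma at all: it is quoted from Hammer and Kelmans \cite{hk}, so there is no internal argument to compare against; your write-up supplies a self-contained proof where the paper relies on a citation. Your peeling induction is the standard route to this fact and meshes well with the paper's own toolkit: the extremal fact (every threshold graph on $n\ge 2$ vertices has an isolated or a dominating vertex) comes from part (2) of Lemma \ref{1}, the observation that a dominating vertex and an isolated vertex cannot coexist removes any ambiguity in reading the type off the sorted degree sequence, and heredity of thresholdness (via the forbidden induced subgraphs $2K_2$, $C_4$, $P_4$) legitimizes applying the induction hypothesis to the reduced graphs. One remark: your discussion of well-definedness when several vertices of the extremal type exist is unnecessary. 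Since you remove one dominating (resp.\ isolated) vertex from each of $G$ and $G'$ and the two reduced graphs have identical degree sequences regardless of which such vertex is chosen, the induction hypothesis applies directly and no canonical choice or automorphism argument is needed. In effect your argument proves the slightly stronger, well-known statement that the entire creation sequence of a threshold graph is recoverable from its degree sequence, which is exactly the content of the cited result.
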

\noindent %\vspace{2mm}

%\begin{lemma}\label{nkc} \cite{lity}
%If $\alpha \in(1/2,1)$ and $3\le k \le n-3$, then $S_{n,n+k}$ is the unique graph with the maximum $A_{\alpha}$-index over
%all connected graphs in $\mathcal{H}_{n,n+k}$.
%\end{lemma}

%\begin{lemma}\label{nk} \cite{zhaih}
%If $k\ge 3$ and $n \ge k+3$, then $S_{n,n+k}$ is the unique graph with the maximum Q-index over
%all graphs in $\mathcal{G}_{n,n+k}$.
%\end{lemma}

Let $D=(d_1, d_2,\ldots, d_n)$ be a non-increasing positive integral sequence. The
 Ferrers matrix (or Ferrers diagram;  see e.g. \cite[p62]{mp}) of $D$ is an $n \times n$ matrix $F$ of $\circ$'s, $\bullet$'s
and, $+$'s  such that
(i). All the diagonal entries and no others are $+$;  (ii).  For each $i,~i\in[n]$, the number of $\bullet$'s contained in the $i$th row is $d_i$;
(iii).  The signals $\bullet$'s in each row are to the left.
%\vspace{2mm}
%\end{defi}
Moreover,  the Ferrers matrix $F(G)$ of a  graph $G$ is defined to be the Ferrers matrix of the degree sequence $D(G)$ of $G$.  If $G$ is a threshold graph, it is easy to see that the adjacency matrix of $G$ is obtained from the Ferrers matrix of $G$  by  replacing symbols  $\circ$ and $+$ with   $0$, and replacing the symbol $\bullet$    with   $1$.  So  the  Ferrers matrix of a threshold graph is symmetric.   For example, the Ferrers matrix of a threshold graph $S_{6,9}$ is  symmetric  (see Figure \ref{fig:1}), the Ferrrers matrix of a non-threshold graph $G_{6,9}$ (see  Figure \ref{fig:2}) is  asymmetric.
 Brualdi and Hoffman in \cite{bruadih} defined a class of matrices in studying the spectral radius of graphs of order $n$  with size $m$.
  %We only concern and describe the entries below the diagonal of the Ferrers matrix.\vspace{2mm}

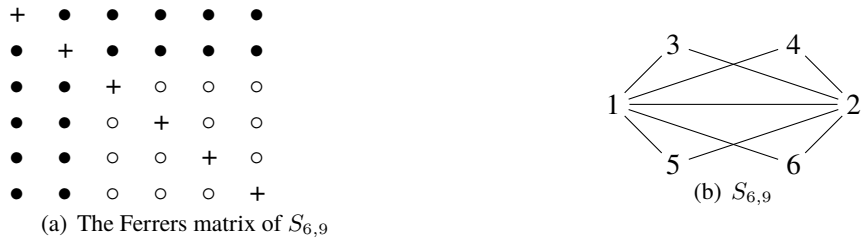
\begin{figure}[htbp]
\centering
\subfigure[The Ferrers matrix of $S_{6,9}$]{
\begin{minipage}{7cm}
\centering
\begin{tabular}{lllllllll}
+           & $\bullet$   & $\bullet$   & $\bullet$   & $\bullet$   & $\bullet$   &             &           &           \\
$\bullet$   & +           & $\bullet$   & $\bullet$   & $\bullet$   & $\bullet$   &             &           &           \\
$\bullet$   & $\bullet$   & +           & $\circ$     & $\circ$     & $\circ$     &             &           &           \\
$\bullet$   & $\bullet$ & $\circ$   & +           & $\circ$     & $\circ$     &             &           &           \\
$\bullet$   & $\bullet$   & $\circ$     & $\circ$     & +           & $\circ$     &             &           &           \\
$\bullet$ & $\bullet$ & $\circ$     & $\circ$     & $\circ$     & +           &             &           &           \\
\end{tabular}
%\caption{Ferrer matrix of $S(6,9)$}
\end{minipage}
}
\subfigure[ $S_{6,9}$]{
\begin{minipage}{7cm}
\centering
\begin{tikzpicture}[scale=.8]
        \node[fill=white,circle,inner sep=1pt] (t1) at (-2,0) {1};
        \node[fill=white,circle,inner sep=1pt] (t2) at (2,0) {2};
		\node[fill=white,circle,inner sep=1pt] (t3) at (-1,1) {3};
        \node[fill=white,circle,inner sep=1pt] (t4) at (1,1) {4};
        \node[fill=white,circle,inner sep=1pt] (t5) at (-1,-1) {5};
        \node[fill=white,circle,inner sep=1pt] (t6) at (1,-1) {6};

        \draw (t1)--(t2) (t1)--(t3) (t1)--(t4) (t1)--(t5) (t1)--(t6);
        \draw  (t2)--(t6) (t2)--(t4) (t2)--(t3) (t2)--(t5);
        \end{tikzpicture}
%\caption{Graph S(6,9)}
\end{minipage}
}
\caption{The Ferrers matrix of a threshold graph is symmetric}
\label{fig:1}
\end{figure}

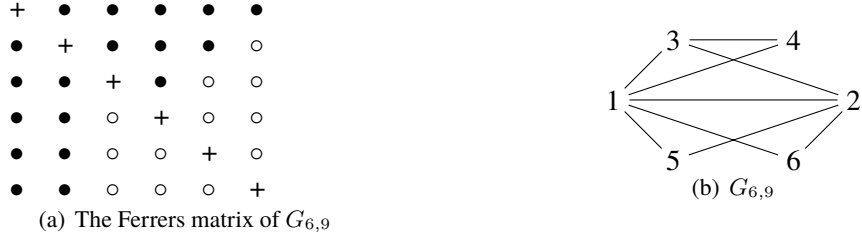
\begin{figure}[htbp]
\centering
\subfigure[The Ferrers matrix of $G_{6,9}$]{
\begin{minipage}{7cm}
\centering
\begin{tabular}{lllllllll}
+           & $\bullet$   & $\bullet$   & $\bullet$   & $\bullet$   & $\bullet$   &             &           &           \\
$\bullet$   & +           & $\bullet$   & $\bullet$  & $\bullet$   & $\circ$ &             &           &           \\
$\bullet$   & $\bullet$   & +           & $\bullet$    & $\circ$     & $\circ$     &             &           &           \\
$\bullet$   & $\bullet$ & $\circ$   & +           & $\circ$     & $\circ$     &             &           &           \\
$\bullet$   & $\bullet$   & $\circ$     & $\circ$     & +           & $\circ$     &             &           &           \\
$\bullet$ & $\bullet$   & $\circ$     & $\circ$     & $\circ$     & +           &             &           &           \\
\end{tabular}
%\caption{Ferrer matrix of $S(6,9)$}
\end{minipage}
}
\subfigure[ $G_{6,9}$]{
\begin{minipage}{7cm}
\centering
\begin{tikzpicture}[scale=.8]
        \node[fill=white,circle,inner sep=1pt] (t1) at (-2,0) {1};
        \node[fill=white,circle,inner sep=1pt] (t2) at (2,0) {2};
		\node[fill=white,circle,inner sep=1pt] (t3) at (-1,1) {3};
        \node[fill=white,circle,inner sep=1pt] (t4) at (1,1) {4};
        \node[fill=white,circle,inner sep=1pt] (t5) at (-1,-1) {5};
        \node[fill=white,circle,inner sep=1pt] (t6) at (1,-1) {6};

        \draw (t1)--(t2) (t1)--(t3) (t1)--(t4) (t1)--(t5) (t3)--(t4);
        \draw  (t1)--(t6) (t2)--(t6) (t2)--(t3) (t2)--(t5);
        \end{tikzpicture}
%\caption{Graph S(6,9)}
\end{minipage}
}
\caption{The Ferrers matrix of a non-threshold graph $G_{6,9}$ is asymmetrical}
\label{fig:2}
\end{figure}

\begin{defi}\cite{bruadih}
Let $A=(a_{ij})_{n\times n}$ be a $(0,1)$ matrix, and $tr(A)=0$. $A$ is said to be stepwise if it has the following property:
$$\mbox{ If }  h>k \mbox{ and } a_{hk}=1, \mbox{ then $ a_{ij}=1$   for all $j<  i\le h $ and $ j\le k$}.$$
\end{defi}
\noindent It is easy to see  that the following lemma holds.
\begin{lemma}\label{lemma-step}
A connected graph $G$ is threshold graph if and only if there exists a permutation matrix $P$ such that  $P^TA(G)P$ is a stepwise matrix.
\end{lemma}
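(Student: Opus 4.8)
The plan is to prove both implications directly from the degree characterization in Lemma~\ref{1}(3), namely that $G$ is threshold iff for all distinct vertices $i,j,k$ with $d_i\ge d_j$ and $jk\in E(G)$ one has $ik\in E(G)$. The guiding observation is that whenever a stepwise matrix arises as $P^TA(G)P$ it is symmetric with zero diagonal, so its $1$-entries in the strictly lower triangle form a ``staircase'' region that is closed both to the left and downward; the content of the lemma is that this staircase shape is exactly the nesting of neighbourhoods forced by the threshold property.

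For the forward direction (threshold $\Rightarrow$ stepwise), I would let $P$ relabel the vertices so that $d_1\ge d_2\ge\cdots\ge d_n$ and set $B=(b_{ij})=P^TA(G)P$, so $b_{ij}=1$ iff the corresponding vertices are adjacent and $\mathrm{tr}(B)=0$ since $G$ has no loops. To verify the stepwise condition, assume $b_{hk}=1$ with $h>k$ and fix indices with $j<i\le h$ and $j\le k$; the goal is $b_{ij}=1$. The argument is a two-step ``push'': first, since $i\le h$ gives $d_i\ge d_h$, applying (3) to the edge $hk$ moves the lower endpoint up to yield $b_{ik}=1$; then, since $j\le k$ gives $d_j\ge d_k$, applying (3) to the edge $ki$ moves the other endpoint left to yield $b_{ij}=1$. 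The only care needed is for the few degenerate coincidences among the indices (for instance $i=k$, where one instead applies (3) directly to $hk$ using $d_j\ge d_h$), because Lemma~\ref{1}(3) requires three distinct vertices; each such case is immediate.

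For the reverse direction (stepwise $\Rightarrow$ threshold), I would start from $P$ with $B=P^TA(G)P$ stepwise and show that the order $1<2<\cdots<n$ witnesses the threshold condition. The core claim is the nesting of neighbourhoods: if $i<j$ then $b_{j\ell}=1\Rightarrow b_{i\ell}=1$ for every $\ell\notin\{i,j\}$. This follows by applying the stepwise definition to whichever of the entries $(j,\ell),(\ell,j)$ lies in the strictly lower triangle and reading off the forced entry, splitting into the cases $\ell<i$, $i<\ell<j$, and $\ell>j$; in each case the left/down closure of the staircase supplies the required $1$. From the nesting one gets $|N(j)\setminus\{i\}|\le|N(i)\setminus\{j\}|$ and hence $d_i\ge d_j$ whenever $i<j$, so the row-sum order is non-increasing; combining the monotonicity with the nesting gives precisely property (3) of Lemma~\ref{1} (the equal-degree pairs being handled by noting their truncated neighbourhoods then coincide), so $G$ is threshold.

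This is essentially a bookkeeping lemma and I do not expect a deep obstacle; the only delicate points are the case analysis forced by the distinctness hypothesis in Lemma~\ref{1}(3) and the faithful translation of the purely combinatorial staircase shape into the neighbourhood-nesting statement. I would structure the write-up around these two observations and invoke Lemma~\ref{1} at the close of each direction, rather than routing through the forbidden induced subgraphs $2K_2$, $C_4$, $P_4$ directly.
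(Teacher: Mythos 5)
Your proposal is correct, and both directions check out in detail: the two-step ``push'' in the forward direction (using $d_i\ge d_h$ to move from $b_{hk}=1$ to $b_{ik}=1$, then $d_j\ge d_k$ to get $b_{ij}=1$) is a valid use of Lemma~\ref{1}(3), the degenerate coincidences ($i=h$, $i=k$, $j=k$) are exactly the ones that need separate treatment and each is handled as you indicate, and in the reverse direction the three-case analysis on $\ell$ correctly exploits the symmetry of $B=P^TA(G)P$ together with the staircase closure to obtain neighbourhood nesting, monotone degrees, and hence property (3). There is, however, nothing in the paper to compare against: the authors state this lemma with only the remark that ``it is easy to see'' it holds, offering no proof at all. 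So your write-up is not an alternative route but a complete verification of a step the paper takes for granted; it is consistent with the machinery the paper implicitly relies on (sorting vertices by degree, the Ferrers/stepwise picture, and the degree characterization of threshold graphs), and it proves slightly more than stated, since connectivity of $G$ is never used in either direction.
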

\noindent Hence in the sequel,  for a threshold graph $G$, we always assume that the adjacency matrix of $G$ is a stepwise matrix with degree sequence $d_1 \ge d_2 \ge \cdots \ge d_n$. The following Lemma presents  some structure properties of graphs having maximum $A_\alpha$-spectral radius in $\mathcal{H}_{n,m}$.
%\begin{rem}
%If $G\in \mathcal{H}_{n, m}$, then $G$ is a threshold graph if and only if there exists a permutation matrix $P$ such that $P^TA(G)P$ is a stepwise adjacency matrix. Without loss of generality, for a threshold graph $G$, we assume that the adjacency matrix of $G$ is a stepwise adjacency matrix. So $d_1 \ge d_2 \ge \cdots \ge d_n$.
%\end{rem}
%Denote by $\mathcal{A}(n, m)=\{A(G) |  G\in \mathcal{G}_{n, m}\}$; $\mathcal{A}_c(n, m)=\{A(G) |  G\in \mathcal{H}_{n, m}\}$ and
%$\mathcal{A}^*(n, m)=\{A(G) |  A(G) \mbox{ is the stepwise adjacency  matrix of a threshold graph } G \in \mathcal{H}_{n, m}\}.$

\begin{lemma}\cite{lity}\label{threshold} For given two positive integers $n$ and $m\ge n-1$, and  $0\le \alpha <1$,  if a connected graph $G$ maximizes the $A_\alpha$-spectral radius in $\mathcal{H}_{n,m}$,  then $G$ must be  a threshold graph.
\end{lemma}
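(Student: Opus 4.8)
The plan is to deduce the threshold property from the structure that the Perron vector forces on a maximizer. Let $G\in\mathcal{H}_{n,m}$ maximize $\rho_\alpha$, write $\rho=\rho_\alpha(G)$, and let $\mathbf{x}=(x_1,\dots,x_n)^T>0$ be its Perron vector (positive and unique up to scaling, since $G$ is connected and $A_\alpha(G)$ is nonnegative and irreducible for $0\le\alpha<1$). The one computation I would lean on throughout is the edge form
\[
\mathbf{x}^TA_\alpha(G)\mathbf{x}=\sum_{ij\in E(G)}\big[\alpha(x_i^2+x_j^2)+2(1-\alpha)x_ix_j\big],
\]
together with the Rayleigh bound $\rho_\alpha(G')\ge \mathbf{x}^TA_\alpha(G')\mathbf{x}/\mathbf{x}^T\mathbf{x}$ for any competitor $G'\in\mathcal{H}_{n,m}$. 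The goal is to show that the neighbourhoods of $G$ are linearly nested with respect to $\mathbf{x}$, which is exactly the stepwise condition, and then invoke Lemma~\ref{lemma-step}.

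The heart is a single edge-relocation claim: for every pair $u,v$ with $x_u\ge x_v$ one has $N(v)\setminus\{u\}\subseteq N(u)$. Suppose not, and pick $w\in N(v)$ with $w\ne u$ and $uw\notin E(G)$; form $G'=G-vw+uw$. Since only the endpoints $u,v$ of the moved edge change weight-contribution, the form changes by
\[
\Delta=\mathbf{x}^TA_\alpha(G')\mathbf{x}-\mathbf{x}^TA_\alpha(G)\mathbf{x}=(x_u-x_v)\big[\alpha(x_u+x_v)+2(1-\alpha)x_w\big]\ge 0 .
\]
Hence $\rho_\alpha(G')\ge\rho$; if $G'$ is connected, maximality gives $\rho_\alpha(G')=\rho$, so $\mathbf{x}$ attains the top eigenvalue of the symmetric matrix $A_\alpha(G')$ and, by Perron--Frobenius on the connected $G'$, must be its Perron vector, i.e. $A_\alpha(G')\mathbf{x}=\rho\mathbf{x}$. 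But comparing the $u$-th coordinate, where $u$ has gained exactly the neighbour $w$,
\[
\big(A_\alpha(G')\mathbf{x}\big)_u=\big(A_\alpha(G)\mathbf{x}\big)_u+\alpha x_u+(1-\alpha)x_w=\rho x_u+\alpha x_u+(1-\alpha)x_w>\rho x_u,
\]
a contradiction. I would stress that this one inequality simultaneously disposes of the strict case $x_u>x_v$ and the tie $x_u=x_v$; applying the claim to both orderings of a tied pair moreover yields $N(u)\setminus\{v\}=N(v)\setminus\{u\}$, so nesting does not depend on how ties are broken. Ordering the vertices by non-increasing $x_i$, the claim says the neighbourhoods form a chain under inclusion, which is precisely the statement that $A(G)$ is stepwise; by Lemma~\ref{lemma-step}, $G$ is a threshold graph.

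The step I expect to be the \emph{main obstacle} is guaranteeing that the relocated graph $G'$ stays connected, since the contradiction above is only valid for $G'\in\mathcal{H}_{n,m}$. Adding the edge $uw$ to a connected graph cannot disconnect it, so $G'$ can fail to be connected only when $vw$ is a bridge whose removal splits $G$ into components $C_v\ni v$ and $C_w\ni w$ with $u\in C_w$. In that case I would instead relocate to $G''=G-vw+uv$ (legitimate when $uv\notin E(G)$): deleting the bridge $vw$ and joining $v\in C_v$ to $u\in C_w$ manifestly reconnects the graph, and here
\[
\Delta''=(x_u-x_w)\big[\alpha(x_u+x_w)+2(1-\alpha)x_v\big],
\]
which is positive whenever $x_u>x_w$. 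The residual subcase, where this alternative move is either illegal ($uv\in E(G)$) or non-improving ($x_u\le x_w$, forcing $x_v\le x_u\le x_w$), is the genuinely delicate one; I would settle it by applying the nesting claim to the pair $(w,v)$ and inducting on the bridge decomposition, i.e. repeatedly re-attaching the hanging block $C_v$ to a vertex of largest Perron weight until no such bridge survives. This bridge-and-pendant bookkeeping is the only place where real work is required; outside of it, the entire lemma reduces to the single relocation inequality together with the coordinate comparison above.
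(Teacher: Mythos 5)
Your core machinery is correct: the edge form of $\mathbf{x}^TA_\alpha(G)\mathbf{x}$, the relocation inequality $\Delta=(x_u-x_v)\bigl[\alpha(x_u+x_v)+2(1-\alpha)x_w\bigr]\ge 0$, the forced equality $\rho_\alpha(G')=\rho$ by maximality, and the coordinate comparison $(A_\alpha(G')\mathbf{x})_u=\rho x_u+\alpha x_u+(1-\alpha)x_w>\rho x_u$ that produces the contradiction; moreover, Perron-ordered nesting of neighbourhoods does yield a stepwise adjacency matrix, hence a threshold graph by Lemma~\ref{lemma-step}. (Note that the paper itself gives no proof of this lemma; it is quoted from \cite{lity}, whose argument is of this same relocation type, so the comparison here is with the standard literature proof.) The genuine gap is exactly the one you flag, and your sketch does not close it. First, a simplification you missed: in your bridge case the subcase ``$uv\in E(G)$'' cannot occur, since $uv$ would be an edge joining the two components $C_v$ and $C_w$ of $G-vw$, contradicting that $vw$ is a bridge; so the fallback move $G''=G-vw+uv$ is always legal there. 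What truly remains is the subcase $x_v\le x_u<x_w$, where $\Delta''<0$ and neither move improves the Rayleigh quotient. Your proposed fix fails on both counts: the nesting claim for the pair $(w,v)$ can hold vacuously (e.g.\ when $v$ is a pendant with $N(v)=\{w\}$, precisely a configuration this subcase allows), so it says nothing about the pair $(u,v)$ you must refute; and ``repeatedly re-attaching $C_v$'' is illegitimate because every inequality you use is tied to the Perron vector $\mathbf{x}$ of the original maximizer $G$ --- after one modification you would need the (unknown) Perron vector of the modified graph.

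The repair is to run your argument first with $u=z^*$, a vertex of maximum Perron weight. For that choice both moves are improving: $\Delta\ge 0$ since $x_{z^*}\ge x_v$, and in the bridge case $\Delta''=(x_{z^*}-x_w)\bigl[\alpha(x_{z^*}+x_w)+2(1-\alpha)x_v\bigr]\ge 0$ since $x_{z^*}\ge x_w$; the coordinate contradiction then yields $N(v)\setminus\{z^*\}\subseteq N(z^*)$ for every $v$. This forces $z^*$ to be dominating: if $ab\in E(G)$ with $a,b\ne z^*$, then $b\in N(a)\setminus\{z^*\}\subseteq N(z^*)$, so every edge avoiding $z^*$ has both endpoints in $N(z^*)$, and a vertex outside $N(z^*)\cup\{z^*\}$ would therefore be isolated, contradicting connectedness. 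Now take an arbitrary pair $(u,v)$ with $x_u\ge x_v$ and a witness $w$. Necessarily $u\ne z^*$ (else no witness exists), $v\ne z^*$ (else $x_u=x_{z^*}$, so $u$ is itself a maximum-weight vertex, hence dominating and $uw\in E(G)$), and $w\ne z^*$ (else $uw=uz^*\in E(G)$ because $z^*$ dominates). Hence the removed edge $vw$ avoids $z^*$, so $G-vw$ still contains the spanning star centred at $z^*$, and $G'=G-vw+uw$ is automatically connected: your bridge case never arises, and your original argument concludes.
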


\begin{lemma} \label{uve} \cite{cfz,lity}  Let $u$, $v$ be distinct vertices of a connected graph $G$  and $x=(x_w)_{w\in V(G)}^T$ be the Perron vector of $A_{\alpha}(G)$ with $\alpha \in [0,1)$.\\
{\em (i).} If $N(u)\setminus\{v\} \supset N(v)\setminus\{u\}$, then $x_u >x_v$.\\
{\em (ii).} If $N(u)\setminus\{v\} = N(v)\setminus\{u\}$, then $x_u=x_v$.
\end{lemma}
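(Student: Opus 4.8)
The plan is to argue directly from the eigenvalue equation for the Perron vector, exploiting that the ``diagonal shift'' $\rho-\alpha d_w$ is strictly positive at every vertex. Write $\rho:=\rho_\alpha(G)$ and recall $A_\alpha(G)=\alpha D(G)+(1-\alpha)A(G)$. First I would read off the relation $A_\alpha(G)x=\rho x$ coordinatewise: for each vertex $w$,
\[
(\rho-\alpha d_w)\,x_w=(1-\alpha)\sum_{z\in N(w)}x_z .
\]
Since $G$ is connected, $A_\alpha(G)$ is nonnegative and irreducible, so the excerpt guarantees $x>0$ and $\rho>0$. As every vertex of a connected graph on at least two vertices has a neighbor (the statement is vacuous for $n=1$), the right-hand side above is strictly positive, and dividing by $x_w>0$ yields $\rho-\alpha d_w>0$ for \emph{every} $w$. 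This positivity is the fact that will drive the whole argument.

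Next I would specialize this identity to $w=u$ and $w=v$ and subtract. It is convenient to put $S_u=N(u)\setminus\{v\}$ and $S_v=N(v)\setminus\{u\}$ and to split into the cases $u\not\sim v$ and $u\sim v$, because adjacency contributes an extra $(1-\alpha)x_v$ to the equation at $u$ and $(1-\alpha)x_u$ to the equation at $v$. Using the algebraic identity $d_ux_u-d_vx_v=d_v(x_u-x_v)+(d_u-d_v)x_u$ to absorb the degree terms, and the fact that in case (i) one has $S_v\subseteq S_u$ so that $\sum_{S_u}x_z-\sum_{S_v}x_z=\sum_{z\in S_u\setminus S_v}x_z$, the subtraction rearranges in both adjacency cases to a single scalar identity
\[
c\,(x_u-x_v)=\alpha(d_u-d_v)\,x_u+(1-\alpha)\!\!\sum_{z\in S_u\setminus S_v}\!\!x_z ,
\]
where $c=\rho-\alpha d_v$ when $u\not\sim v$ and $c=\rho-\alpha d_v+(1-\alpha)$ when $u\sim v$. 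By the first paragraph, $c>0$ in either case.

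The conclusions then read off immediately. Under hypothesis (i), $S_u\supsetneq S_v$ forces $d_u-d_v=|S_u|-|S_v|>0$ and makes the sum over $S_u\setminus S_v$ strictly positive (as $x>0$), so the right-hand side is positive and $x_u>x_v$. Under hypothesis (ii), $S_u=S_v$ gives $d_u=d_v$ and an empty sum, so the right-hand side vanishes and, with $c>0$, we get $x_u=x_v$.

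I expect the only real obstacle to be the bookkeeping rather than any deep idea: the crux is establishing $\rho-\alpha d_w>0$ at every vertex, and then correctly tracking the mutual-adjacency contributions so that the coefficient $c$ is identified precisely and seen to be positive in \textbf{both} the adjacent and non-adjacent cases. Once the displayed scalar identity is in hand, everything reduces to reading off signs, so no delicate estimates are needed.
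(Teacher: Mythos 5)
The paper does not actually prove this lemma: it is imported verbatim from the cited references \cite{cfz,lity} and used as a black box, so there is no in-paper argument to compare yours against. Judged on its own merits, your proof is correct and complete. The coordinatewise eigen-equation $(\rho-\alpha d_w)x_w=(1-\alpha)\sum_{z\in N(w)}x_z$ is right, and your key observation that this forces $\rho-\alpha d_w>0$ at every vertex (using connectivity, $n\ge 2$, $x>0$ and $1-\alpha>0$) is exactly what makes the coefficient $c$ positive in both adjacency cases; your values $c=\rho-\alpha d_v$ (non-adjacent) and $c=\rho-\alpha d_v+(1-\alpha)$ (adjacent) check out, as does the absorption identity $d_ux_u-d_vx_v=d_v(x_u-x_v)+(d_u-d_v)x_u$. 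The sign analysis is also clean because $d_u-d_v=|S_u|-|S_v|$ holds whether or not $u\sim v$, so under (i) both terms on the right are nonnegative and the sum over $S_u\setminus S_v\neq\emptyset$ is strictly positive, giving $x_u>x_v$, while under (ii) the right side vanishes identically, giving $x_u=x_v$. This direct subtract-the-eigen-equations argument is essentially the standard proof found in the cited sources, so you have in effect filled in the omitted proof rather than diverged from it.
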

\noindent It follows from Lemma \ref{uve} that we have the following corollary.
\begin{coro} \label{y1y2}
Let $G$ be a connected threshold graph with non-increasing degree sequence $(d_1, d_2, \ldots, d_n)$ and  $(y_1,y_2,\cdots,y_n)^T$ be the Perron vector of $A_{\alpha}(G)$, then\\
{\em (i).} $y_1 \ge y_2 \ge \cdots \ge y_n$. \\
{\em (ii).} If $d_i=d_j$, then $y_i=y_j$ for $1\le i, j\le n$.
\end{coro}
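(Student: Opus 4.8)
The plan is to derive both statements directly from Lemma \ref{uve}, using the degree-monotonicity characterization of threshold graphs in Lemma \ref{1}(3) to verify that its hypotheses are met. The crucial structural fact I would establish first is a neighborhood-nesting property: whenever $d_i \ge d_j$ for distinct vertices $v_i, v_j$, one has
$$N(v_j)\setminus\{v_i\} \subseteq N(v_i)\setminus\{v_j\}.$$
To prove this inclusion I would take an arbitrary $v_k \in N(v_j)\setminus\{v_i\}$. Then $v_k \sim v_j$ forces $v_k \ne v_j$, and by choice $v_k \ne v_i$; hence $v_i, v_j, v_k$ are three distinct vertices. Since $d_i \ge d_j$ and $v_jv_k \in E(G)$, Lemma \ref{1}(3) gives $v_iv_k \in E(G)$, i.e. $v_k \in N(v_i)$, and as $v_k \ne v_j$ we conclude $v_k \in N(v_i)\setminus\{v_j\}$, proving the claim.

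Next I would control the cardinalities to pin down whether the inclusion is strict. Writing $\epsilon = 1$ if $v_iv_j \in E(G)$ and $\epsilon = 0$ otherwise (adjacency is symmetric, so this single quantity governs both sets), we get $|N(v_i)\setminus\{v_j\}| = d_i - \epsilon$ and $|N(v_j)\setminus\{v_i\}| = d_j - \epsilon$. This yields a clean dichotomy. If $d_i > d_j$, the two finite sets in the inclusion have different sizes, so the inclusion is proper, $N(v_i)\setminus\{v_j\} \supset N(v_j)\setminus\{v_i\}$, and Lemma \ref{uve}(i) gives $y_i > y_j$. If $d_i = d_j$, the sets have equal size, so the inclusion is an equality $N(v_i)\setminus\{v_j\} = N(v_j)\setminus\{v_i\}$, and Lemma \ref{uve}(ii) gives $y_i = y_j$.

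Both conclusions then follow immediately. For (i), I would apply this dichotomy to each consecutive pair of indices: since the degree sequence is non-increasing, $d_i \ge d_{i+1}$ for every $i$, whence $y_i \ge y_{i+1}$; chaining these inequalities gives $y_1 \ge y_2 \ge \cdots \ge y_n$. Statement (ii) is exactly the equality branch $d_i = d_j$ established above.

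I do not expect a genuine obstacle in this corollary, since it is essentially a specialization of Lemma \ref{uve}. The only points demanding care are bookkeeping ones: first verifying the distinctness of $v_i, v_j, v_k$ before invoking the threshold characterization, and second deleting $\{v_i\}$ and $\{v_j\}$ from the respective neighborhoods so that the inclusion (and equality) is between $N(u)\setminus\{v\}$ and $N(v)\setminus\{u\}$ in the precise form required by Lemma \ref{uve}, rather than between the raw neighborhoods $N(v_i)$ and $N(v_j)$.
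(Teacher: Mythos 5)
Your proof is correct and follows exactly the route the paper intends: the paper gives no explicit proof, stating only that the corollary ``follows from Lemma \ref{uve}'', and your argument is precisely the missing verification --- using the threshold characterization in Lemma \ref{1}(3) to establish the neighborhood nesting $N(v_j)\setminus\{v_i\} \subseteq N(v_i)\setminus\{v_j\}$ when $d_i \ge d_j$, then settling proper inclusion versus equality by the cardinality count and invoking Lemma \ref{uve}(i) or (ii) accordingly. The bookkeeping points you flag (distinctness of the three vertices, deleting $v_i$ and $v_j$ before comparing) are handled correctly.
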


Let $M$ be a real symmetric $n\times n$ matrix, and let ${V}=\{1,2,\cdots,n\}=[n]$. Given a partition
$\Pi:{V}={V}_1 \bigcup {V}_2 \cdots \bigcup {V}_k$, the matrix $M$ can be correspondingly partitioned as
$$
\begin{pmatrix}
M_{1,1} & M_{1,2}& \cdots & M_{1,k}\\
M_{2,1} & M_{2,2}& \cdots & M_{2,k}\\
\vdots &\vdots & \ddots & \vdots\\
M_{k,1} & M_{k,2}& \cdots & M_{k,k}\\
\end{pmatrix}
$$
    The quotient matrix of $M$ with respect to $\Pi$ is defined as the $k \times k$ matrix
$M_{\Pi}=(b_{i,j})^k_{i,j=1}$ where $b_{i,j}$ is the average value of all row sums of $M_{i,j}$. The partition $\Pi$ is called equitable if each block $M_{i,j}$ of $M$ has constant row sum $b_{i,j}$. We also say that the quotient matrix $M_{\Pi}$ is equitable if $\Pi$ is an equitable partition of $M$.  The relationship between  eigenvalues of $M$ and $M_{\Pi}$ may be stated as follows.
\begin{lemma}\label{quot}  \cite{bh}
Let $M$ be a real symmetric matrix and $\lambda(M)$ be its largest eigenvalue. If $M_{\Pi}$
be an equitable quotient matrix of $M$, then the eigenvalues of $M_{\Pi}$ are also eigenvalues
of $M$. Furthermore, if $M$ is nonnegative and irreducible, then $\lambda(M)=\lambda(M_{\Pi})$.
\end{lemma}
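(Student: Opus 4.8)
The plan is to realize the quotient matrix concretely through the characteristic matrix of the partition and then read off both eigenvalue statements from an invariant-subspace decomposition. First I would introduce the $n\times k$ characteristic matrix $S=(s_{v,i})$ of $\Pi$, where $s_{v,i}=1$ if $v\in V_i$ and $0$ otherwise; its columns are the indicator vectors of the parts, so they are nonzero with pairwise disjoint supports. Hence $S$ has full column rank $k$ and $S^{T}S=\mathrm{diag}(|V_1|,\ldots,|V_k|)$ is invertible. The equitability hypothesis translates exactly into the identity $MS=SM_{\Pi}$: for $v\in V_i$ the $(v,j)$ entry of $MS$ is the row sum $\sum_{w\in V_j}M_{v,w}$, which by the constant-row-sum condition equals $b_{i,j}$, and this is precisely the $(v,j)$ entry of $SM_{\Pi}$. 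Equivalently $M_{\Pi}=(S^{T}S)^{-1}S^{T}MS$. Writing $U=\mathrm{col}(S)$, the relation $MS=SM_{\Pi}$ says that $U$ is an invariant subspace of $M$, and in the column basis of $S$ the restriction $M|_{U}$ is represented exactly by $M_{\Pi}$, so $M|_U$ and $M_{\Pi}$ have the same eigenvalues.

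For the first assertion I would take any eigenpair $(\mu,y)$ of $M_{\Pi}$ with $y\neq 0$ and compute $M(Sy)=(MS)y=SM_{\Pi}y=\mu(Sy)$. Since $S$ has full column rank, $Sy\neq 0$, so $\mu$ is an eigenvalue of $M$ with eigenvector $Sy$; thus every eigenvalue of $M_{\Pi}$ is an eigenvalue of $M$, and in particular $\lambda(M_{\Pi})\le\lambda(M)$.

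For the second assertion I would invoke symmetry and Perron--Frobenius. Because $M$ is symmetric and $U$ is $M$-invariant, the orthogonal complement $U^{\perp}$ is $M$-invariant as well. Now $U^{\perp}$ consists of the vectors whose entries sum to zero on every part $V_i$, so any nonzero vector of $U^{\perp}$ has a strictly negative entry and cannot be positive. Since $M$ is nonnegative and irreducible, Perron--Frobenius furnishes a positive eigenvector $x$ for $\lambda(M)$. Decomposing $x=u+w$ with $u\in U$ and $w\in U^{\perp}$, and using that $Mu\in U$ while $Mw\in U^{\perp}$, the equation $Mx=\lambda(M)x$ splits by uniqueness of the orthogonal decomposition into $Mu=\lambda(M)u$ and $Mw=\lambda(M)w$. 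The component $u$ is the projection $S(S^{T}S)^{-1}S^{T}x$, and because $x>0$ while $S$ is a nonnegative full-rank matrix, $u$ is in fact entrywise positive, hence $u\neq 0$. Therefore $u$ is an eigenvector of $M|_{U}$ for $\lambda(M)$, which means $\lambda(M)$ is an eigenvalue of $M_{\Pi}$; combined with $\lambda(M_{\Pi})\le\lambda(M)$ from the first part, this forces $\lambda(M)=\lambda(M_{\Pi})$.

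The hard part will be exactly this last step, namely certifying that the Perron eigenvalue is captured by the small matrix $M_{\Pi}$ rather than living entirely inside $U^{\perp}$. The decisive ingredients are the invariance of $U^{\perp}$ (which needs the symmetry of $M$) together with the observation that $U^{\perp}$ contains no positive vector, so the positive Perron vector must have a nonzero $U$-component. One could alternatively average the positive eigenvector over the parts and verify directly that the averaged vector is a positive eigenvector of $M_{\Pi}$ with eigenvalue $\lambda(M)$, but establishing that $x$ is constant on the parts amounts to the same projection argument.
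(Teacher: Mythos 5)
Your proof is correct. The paper itself gives no proof of this lemma---it is quoted with a citation to Brouwer and Haemers---and your argument (the characteristic matrix $S$ of the partition, the identity $MS=SM_{\Pi}$ encoding equitability, lifting eigenpairs of $M_{\Pi}$ through the injective map $S$, and splitting the positive Perron vector of $M$ along $\mathrm{col}(S)\oplus\mathrm{col}(S)^{\perp}$ to show its projection is a positive, hence nonzero, eigenvector inside $\mathrm{col}(S)$) is essentially the standard argument from that reference, with every step (full column rank of $S$, invariance of the orthogonal complement via symmetry, positivity of the part-averaged projection) justified correctly.
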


\noindent In order to obtain our results, we also present some properties of the signless Laplacian spectral radius  $q(G)$ of the signless Laplacian matrix $Q(G)=D(G)+A(G)$ of a simple graph $G$.

\begin{lemma}\label{ue} \cite{das,feng}
If $G \in \mathcal{H}_{n,m}$, then
$$
q(G)=\max_{v \in V(G)} \{ d_v+\frac{1}{d_v}\sum\limits_{w \in N(v)} d_w \} \le \frac{2m}{n-1}+n-2.
$$
Moreover, $q(G)=\frac{2m}{n-1}+n-2$ if and only if $G$ is isomorphic to either $S_{n}$ or $K_n$.

\end{lemma}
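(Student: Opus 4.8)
The plan is to read the displayed relation as a chain $q(G)\le \max_{v}\{d_v+m_v\}\le \frac{2m}{n-1}+n-2$, where I abbreviate $m_v=\frac{1}{d_v}\sum_{w\in N(v)}d_w$ and $S_v=\sum_{w\in N(v)}d_w=d_v m_v$, and then to extract the equality case by tracing back where each inequality is tight. The two inequalities are proved by completely different tools, so I would treat them separately and merge the tightness conditions only at the end.

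\emph{First inequality.} I would pass from $Q(G)=D(G)+A(G)$ to the similar matrix $M=D(G)^{-1}Q(G)D(G)=D(G)+D(G)^{-1}A(G)D(G)$, which is nonnegative and, since $G$ is connected, irreducible; hence $q(G)=\rho(M)$. A one-line computation shows that the $v$-th row sum of $M$ equals $d_v+\frac{1}{d_v}\sum_{w\sim v}d_w=d_v+m_v$. Since the spectral radius of a nonnegative matrix never exceeds its largest row sum, $q(G)=\rho(M)\le\max_v\{d_v+m_v\}$. For an irreducible nonnegative matrix this is an equality exactly when all row sums coincide, a fact I will reuse below.

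\emph{Second inequality.} Everything reduces to bounding $S_v$ for a fixed vertex of degree $d:=d_v$, and I would use two elementary estimates: (I) each of the $d$ neighbours has degree at most $n-1$, so $S_v\le d(n-1)$; and (II) writing $S_v=2m-d-\sum_{w\neq v,\,w\not\sim v}d_w$ and using that connectivity forces each of the $n-1-d$ non-neighbours to have degree at least $1$, so $S_v\le 2m-(n-1)$. Substituting into $f(v):=d+S_v/d$ and splitting on the sign of $d-\bigl(\frac{2m}{n-1}-1\bigr)$ finishes it: when $d\le \frac{2m}{n-1}-1$, estimate (I) gives $f(v)\le d+(n-1)$, which is increasing in $d$ and equals $T:=\frac{2m}{n-1}+n-2$ at the threshold; when $d\ge\frac{2m}{n-1}-1$, estimate (II) gives $f(v)\le d+\frac{2m-(n-1)}{d}$, and $d+\frac{2m-(n-1)}{d}\le T$ is equivalent to the quadratic inequality $(d-(n-1))\bigl(d-(\tfrac{2m}{n-1}-1)\bigr)\le 0$, which holds on exactly that range. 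Hence $f(v)\le T$ for every $v$, giving the second inequality.

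\emph{Equality.} This is the delicate part, and the one I expect to cost the most work. If $q(G)=T$, both inequalities are tight, so by the row-sum criterion $d_v+m_v$ is constant and equal to $T$. Re-examining the regime split, any vertex with $d_v<\frac{2m}{n-1}-1$ would satisfy $f(v)\le d_v+(n-1)<T$ strictly; hence every vertex has $d_v\ge\frac{2m}{n-1}-1$, tightness of the quadratic forces $d_v\in\{\frac{2m}{n-1}-1,\;n-1\}$, and tightness of (II) forces every non-neighbour of every vertex to be a pendant. If all degrees equal $n-1$ then $G=K_n$. Otherwise some vertex has degree $\frac{2m}{n-1}-1<n-1$ and thus a non-neighbour, which must be a pendant; since the pendant's degree $1$ must also lie in $\{\frac{2m}{n-1}-1,n-1\}$, we get $\frac{2m}{n-1}-1=1$, i.e. $m=n-1$, so $G$ is a tree all of whose degrees are $1$ or $n-1$, forcing $G=K_{1,n-1}=S_n$; the converse for $K_n$ and $S_n$ is a direct check. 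The main obstacle is precisely this back-tracking: one must show the two tightness conditions are simultaneously compatible only for these two graphs while keeping the irreducibility and constant-row-sum bookkeeping consistent with the degree constraints.
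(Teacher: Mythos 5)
Your proof is correct, but note that the paper itself contains no proof of this lemma to compare against: it is quoted verbatim from the cited references \cite{das,feng}, so the "paper's route" is citation only. Your argument is, in substance, the standard one underlying those references, and it is complete: the similarity $M=D^{-1}QD$ plus the Perron--Frobenius row-sum bound gives $q(G)\le\max_v\{d_v+m_v\}$, the counting estimate $\sum_{w\sim v}d_w\le 2m-d_v-(n-1-d_v)$ together with the factorization $\bigl(d-(n-1)\bigr)\bigl(d-(\tfrac{2m}{n-1}-1)\bigr)\le 0$ yields the numerical bound, and your back-tracking of tightness (constant row sums of the irreducible matrix $M$, degrees forced into $\{\tfrac{2m}{n-1}-1,\,n-1\}$, non-neighbours forced to be pendant, hence $m=n-1$ and $G=K_{1,n-1}$ unless all degrees are $n-1$) correctly isolates $S_n$ and $K_n$. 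One point worth flagging explicitly: as printed, the lemma asserts the identity $q(G)=\max_v\{d_v+\tfrac{1}{d_v}\sum_{w\in N(v)}d_w\}$, which is false in general --- for $P_4$ the maximum is $3.5$ while $q(P_4)=2+\sqrt{2}<3.5$; the correct statement, the one proved in \cite{das,feng}, the one you actually prove, and the only one the paper ever uses (in the proof of Theorem \ref{t2n-2-1}), replaces that equality sign by "$\le$". Your reading of the display as the chain $q(G)\le\max_v\{d_v+m_v\}\le\frac{2m}{n-1}+n-2$ is therefore the right correction of a misprint, not a deviation from the intended result.
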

\begin{lemma}\label{2n-2}
If $n\ge 4$, then $q(S_{n,2n-2}) \ge n+1.6$ and $q(S_{n,2n-1}) \ge n+1.75$.
\end{lemma}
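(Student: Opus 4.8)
The plan is to realize each graph explicitly as a join, partition its vertices into degree classes (which is an equitable partition for these threshold graphs, since vertices of equal degree share the same neighbourhood apart from themselves), and apply Lemma~\ref{quot} to $Q(G)=D(G)+A(G)$ so that $q(G)$ equals the largest eigenvalue of a small quotient matrix $B$; a sign test on the characteristic polynomial of $B$ at the target value then delivers the bound. Concretely, from the definition of $S_{n,m}$ one checks $S_{n,2n-2}=K_2\vee\big(K_2\cup (n-4)K_1\big)$ for $n\ge 5$ and $S_{n,2n-1}=K_2\vee\big(K_{1,2}\cup(n-5)K_1\big)$ for $n\ge 6$; the small boundary cases are handled directly, namely $S_{4,6}=K_4$ (so $q=6\ge 5.6$) and $S_{5,9}=K_5-e=K_3\vee\overline{K_2}$ (whose $2\times2$ quotient matrix $\left(\begin{smallmatrix}6&2\\3&3\end{smallmatrix}\right)$ gives $q=\tfrac{9+\sqrt{33}}{2}>6.75$). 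Note that $S_{n,2n-1}$ only exists for $n\ge 5$, so the second inequality is proved in that range.

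For the generic ranges I would write the quotient matrices of $Q$ with respect to the degree classes. For $S_{n,2n-2}$ the classes are the two dominating vertices (degree $n-1$), the two vertices of the extra $K_2$ (degree $3$), and the $n-4$ remaining vertices (degree $2$), giving
\[
B_1=\begin{pmatrix} n & 2 & n-4\\ 2 & 4 & 0\\ 2 & 0 & 2\end{pmatrix},
\]
while for $S_{n,2n-1}$ the classes are the two dominating vertices (degree $n-1$), the centre of $K_{1,2}$ (degree $4$), its two leaves (degree $3$), and the $n-5$ remaining vertices (degree $2$), giving
\[
B_2=\begin{pmatrix} n & 1 & 2 & n-5\\ 2 & 4 & 2 & 0\\ 2 & 1 & 3 & 0\\ 2 & 0 & 0 & 2\end{pmatrix}.
\]
Since each graph is connected, $Q$ is nonnegative and irreducible, so Lemma~\ref{quot} yields $q(S_{n,2n-2})=\lambda(B_1)$ and $q(S_{n,2n-1})=\lambda(B_2)$; moreover each $B_i$ is similar to a symmetric matrix, hence has only real eigenvalues with $\lambda(B_i)$ the largest.

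The key step is the lower bound on $\lambda(B_i)$. Because $\det(xI-B_i)$ is monic (positive leading coefficient) and its largest root is $\lambda(B_i)$, it suffices to show the polynomial is nonpositive at the target. For $B_1$ one computes $\det(xI-B_1)=(x-n)(x-4)(x-2)-4(x-2)-2(n-4)(x-4)$, and substituting $x=n+1.6$ collapses this to $-0.4n^2+4.32n-16.064$; the associated quadratic $0.4n^2-4.32n+16.064$ has negative discriminant, so $\det((n+1.6)I-B_1)<0$ for all $n$, forcing $\lambda(B_1)>n+1.6$. For $B_2$ the quartic factors as $\det(xI-B_2)=(x-2)\,g(x)$ with $g(x)=(x-n)(x-2)(x-5)-2(n-5)(x-5)-6x+14$; discarding the small root $x=2$, it suffices to show $g(n+1.75)\le 0$, and substitution gives $g(n+1.75)=-0.25n^2+4.375n-27.578125$, whose companion quadratic again has negative discriminant, so $g(n+1.75)<0$ and $\lambda(B_2)>n+1.75$.

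The main obstacle I anticipate is purely computational rather than conceptual: carrying out the $4\times 4$ determinant for $B_2$ cleanly and confirming the factorization $(x-2)g(x)$, then verifying the two discriminant inequalities without arithmetic slips. A secondary point requiring care is the bookkeeping of boundary cases where a degree class becomes empty (e.g. $n=4,5$), which must be excluded from the quotient-matrix argument and treated by hand as above. If one prefers to avoid the quartic entirely, an alternative is to exhibit, for each $B_i$, an explicit positive vector $v$ with $B_i v\ge \mu v$ componentwise at $\mu=n+1.6$ (resp.\ $n+1.75$) and invoke the Collatz–Wielandt bound; this trades the determinant computation for guessing a suitable test vector, but the characteristic-polynomial route above is self-contained and already complete.
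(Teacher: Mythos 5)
Your proof is correct and follows essentially the same route as the paper: you construct the identical equitable quotient matrices for $Q(S_{n,2n-2})$ and $Q(S_{n,2n-1})$ and invoke Lemma~\ref{quot} together with a sign analysis of the characteristic polynomials. You in fact supply the details the paper dismisses as ``easy to see''---the explicit evaluations $f(n+1.6)=-0.4n^2+4.32n-16.064<0$ and, via the factorization $\det(xI-B_2)=(x-2)g(x)$, the bound $g(n+1.75)=-0.25n^2+4.375n-27.578125<0$---and you correctly handle the boundary cases $n=4,5$ in which a partition class would be empty.
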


\begin{proof}
Let $\Pi=\{ \{v_1,v_2\},\{v_3,v_4\},\{v_5,\cdots,v_n\}\}$. Then $Q(S_{n,2n-2})_{\Pi}$ is an
equitable quotient matrix of $Q(S_{n,2n-2})$.
Hence
\begin{equation*}
Q(S_{n,2n-2})_{\Pi}=\begin{pmatrix}
n &2 & n-4\\
2 &4 &0\\
2 &0  &2\\
\end{pmatrix}
\end{equation*}
Let $f(x)=det(xI_3-Q(S_{n,2n-2})_{\Pi})=x^{3} + \left(- n - 6\right) x^{2} + \left(4 n + 12\right) x - 24$. It is easy to see that the largest eigenvalue of $Q(S_{n,2n-2})_{\Pi}$ is greater than $n+1.6$.
 By Lemma \ref{quot}, $q(S_{n,2n-2}) \ge n+1.6$.

Let $\Pi=\{ \{v_1,v_2\},\{v_3\},\{v_4,v_5\},\{v_6,\cdots,v_n\}\}$. Then $Q(S_{n,2n-1})_{\Pi}$ is an equitable quotient matrix of $Q(S_{n,2n-1})$. Hence
\begin{equation*}
Q(S_{n,2n-1})_{\Pi}=\begin{pmatrix}
n &1& 2& n-5\\
2 &4& 2&0\\
2 &1 &3&0\\
2&0&0&2
\end{pmatrix}
\end{equation*}
Let $g(x)=det(xI_4-Q(S_{n,2n-1})_{\Pi})=x^{4} + \left(- n - 9\right) x^{3} + \left(7 n + 28\right) x^{2} + \left(- 10 n - 64\right) x + 72$. It is easy to see that $q(S_{n,2n-1}) \ge n+1.75$ by Lemma \ref{quot}.
\end{proof}

\section{Transformations}

In this section, we introduce several transformations and their properties, which will play a key role in the proof of the main results.
 Denote by $E_{pq}=(e_{ij})_{n\times n}$  the  $(0, 1)$- matrix with $e_{pq}=e_{qp}=1$, and $0$ in the other positions.
  %Denote by $\mathcal{A}(n, m)=\{A(G) |  G\in \mathcal{G}_{n, m}\}$; $\mathcal{A}_c(n, m)=\{A(G) |  G\in \mathcal{H}_{n, m}\}$ and
%$\mathcal{A}^*(n, m)=\{A(G) |  A(G) \mbox{ is the stepwise adjacency  matrix of a threshold graph } G \in \mathcal{H}_{n, m}\}.$

\begin{defi}\label{def0}
Let $A(G)=(a_{ij})_{n\times n}$ %\in \mathcal{A}^*(n,m)$
be the stepwise adjacency matrix of a connected threshold graph $G$.
We say the graph $G'$ is obtained from $G$ by a Transformation $( p, q; h, k)$, if there exist four positive integers $p$, $q$, $h$, $k$ such that the following conditions hold: \\
{\em (i).}   $2\le q<k<h<p$.\\
{\em (ii).}  $a_{pq}=0$, $a_{pj}=1$ whenever $j<q$; $a_{iq}=1$ whenever $q<i<p$.\\
{\em (iii).}  $a_{hk}=1$, $a_{hj}=0$ whenever $j>k$; $a_{ik}=0$ whenever $i>h$.\\
{\em (iv).}  $G'=G-v_hv_k+v_pv_q$.
\end{defi}
\noindent Clearly, $G'$ is also a connected threshold graph and $A(G')=A(G)-E_{hk}+E_{pq}$.

\begin{lemma}\label{rho}
Let $A(G)=(a_{ij})_{n\times n}$ %\in \mathcal{A}^*(n,m)$
be the stepwise adjacency matrix of a connected threshold graph $G$ and the graph $G^{\prime}$ be obtained from $G$ by a Transformation $(p,q; h,k)$.
If  $x=(x_1,x_2,\cdots,x_n)^T$ and $y=(y_1,y_2,\cdots,y_n)^T$ are the Perron vectors corresponding to $\rho_1=\rho_{\alpha}(G)$ and $\rho_2=\rho_{\alpha}(G')$, respectively, then %here $G'$ is obtained by a Transformation $(p, q; h, k)$ from $G$. We have
\begin{equation}\label{rho1}
(\rho_1-k\alpha)(x_h-x_p) =(k-q+1)\alpha x_p+(1-\alpha)(x_{q}+\cdots+x_{k}),
\end{equation}
\begin{equation}\label{rho2}
(\rho_2-p\alpha+1 )(y_q-y_k) =(p-h+1)\alpha y_k+(1-\alpha)(y_h+\cdots+y_{p}).
\end{equation}
\end{lemma}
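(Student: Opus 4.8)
The plan is to derive both identities purely from the local eigen-equations of $A_\alpha$, once the relevant neighborhoods have been pinned down. Recall that if $z=(z_i)$ is the Perron vector of $A_\alpha(H)$ with eigenvalue $\rho$, then for every vertex $i$ one has the scalar identity
\begin{equation*}
(\rho-\alpha d_i)\,z_i=(1-\alpha)\sum_{j\in N_H(i)} z_j .
\end{equation*}
I would apply this to the distinguished vertices $v_h,v_p$ in $G$ (for \eqref{rho1}) and to $v_q,v_k$ in $G'$ (for \eqref{rho2}), and then subtract the resulting equations in pairs so that a common factor $\rho-\text{const}$ can be extracted.

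First I would read off the exact neighborhoods from Definition \ref{def0}. Using that $A(G)$ is stepwise together with conditions (ii)--(iii), the rows $h$ and $p$ are initial segments, $N_G(v_h)=\{v_1,\dots,v_k\}$ and $N_G(v_p)=\{v_1,\dots,v_{q-1}\}$, so $d_h=k$ and $d_p=q-1$. The local identity then gives $(\rho_1-k\alpha)x_h=(1-\alpha)(x_1+\cdots+x_k)$ and $(\rho_1-(q-1)\alpha)x_p=(1-\alpha)(x_1+\cdots+x_{q-1})$. Rewriting the second equation with the common factor $\rho_1-k\alpha$ (which produces the correction term $(k-q+1)\alpha x_p$) and subtracting, the two initial-segment sums telescope to $(1-\alpha)(x_q+\cdots+x_k)$, which is exactly \eqref{rho1}.

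For \eqref{rho2} I would repeat the argument in $G'$ with $v_q$ and $v_k$. Since $G'=G-v_hv_k+v_pv_q$, the same structural reading gives $N_{G'}(v_q)=\{v_1,\dots,v_{q-1},v_{q+1},\dots,v_p\}$ and $N_{G'}(v_k)=\{v_1,\dots,v_{k-1},v_{k+1},\dots,v_{h-1}\}$, hence $d'_q=p-1$ and $d'_k=h-2$. The two local identities read
\begin{align*}
(\rho_2-(p-1)\alpha)\,y_q&=(1-\alpha)\Big(\textstyle\sum_{j=1}^{p}y_j-y_q\Big),\\
(\rho_2-(h-2)\alpha)\,y_k&=(1-\alpha)\Big(\textstyle\sum_{j=1}^{h-1}y_j-y_k\Big).
\end{align*}
Subtracting, the two initial-segment sums collapse to $\sum_{j=h}^{p}y_j$ together with the boundary terms $-y_q+y_k$; collecting everything as a multiple of $(y_q-y_k)$ forces the common factor to be $\rho_2-p\alpha+1$ and leaves precisely the coefficient $(p-h+1)\alpha$ on $y_k$, which is \eqref{rho2}.

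The only genuinely non-mechanical step is the first one: justifying that the neighborhoods of the four vertices are exactly the claimed intervals. This is where the stepwise hypothesis and the symmetry of the threshold adjacency matrix are essential — for instance, ruling out an edge $v_pv_j$ with $q\le j<p$ uses that $a_{pj}=1$ would force $a_{pq}=1$ by the stepwise property, contradicting $a_{pq}=0$, and the analogous one-sided arguments fix the interval neighborhoods of $v_h,v_k,v_q$. Once the degrees $d_h=k$, $d_p=q-1$, $d'_q=p-1$, $d'_k=h-2$ and these neighborhoods are in hand, both identities are bookkeeping; the one place to stay careful is tracking the constant shifts so that the specific factors $\rho_1-k\alpha$ and $\rho_2-p\alpha+1$ (rather than the raw factors $\rho-\alpha d_i$) emerge.
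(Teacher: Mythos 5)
Your proposal is correct and follows essentially the same route as the paper: the paper likewise writes the $A_\alpha$-eigenvalue equations at rows $h,p$ of $G$ and rows $q,k$ of $G'$ (using exactly the interval neighborhoods $N_G(v_h)=\{v_1,\dots,v_k\}$, $N_G(v_p)=\{v_1,\dots,v_{q-1}\}$, $N_{G'}(v_q)$, $N_{G'}(v_k)$ that you identify), subtracts in pairs, and rearranges to extract the factors $\rho_1-k\alpha$ and $\rho_2-p\alpha+1$. Your added justification of the interval structure via the stepwise property is a correct elaboration of what the paper leaves implicit.
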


\begin{proof}
By the $h$-th and $p$-th equations of $\rho_{\alpha}(G)x=A_{\alpha}(G)x$, we have
\begin{subequations}
\begin{align}
\rho_1x_h &=k\alpha x_h+(1-\alpha)(x_1+x_2+x_3+\cdots+x_{k}),\label{Zc}\\
\rho_1x_p &= (q-1)\alpha x_p+(1-\alpha)(x_1+x_2+x_3+\cdots+x_{q-1}).\label{Zd}
\end{align}
\end{subequations}
By subtracting \eqref{Zd} from \eqref{Zc},
\begin{equation}
\rho_1(x_h-x_p) =k\alpha x_h-(q-1)\alpha x_p+(1-\alpha)(x_{q}+\cdots+x_{k}),
\end{equation}
which implies
\begin{equation*}
(\rho_1-k\alpha)(x_h-x_p) =(k-q+1)\alpha x_p+(1-\alpha)(x_{q}+\cdots+x_{k}),
\end{equation*}
i.e. (1) holds.

By the $q$-th and $k$-th equations of $\rho_{\alpha}(G')y=A_{\alpha}(G')y$, we have
\begin{subequations}
\begin{align}
\rho_2y_q &= p \alpha y_q-y_q+(1-\alpha)(y_1+y_2+y_3+\cdots+y_{p}),\label{Ya}
\\\rho_2y_k &= (h-1)\alpha y_k-y_k+(1-\alpha)(y_1+y_2+y_3+\cdots+y_{h-1}).\label{Yb}
\end{align}
\end{subequations}
By subtracting \eqref{Yb} from \eqref{Ya},
\begin{equation}
\rho_2(y_q-y_k) =p \alpha y_q-(h-1)\alpha y_k-(y_q-y_k)+(1-\alpha)(y_h+\cdots+y_{p}),
\end{equation}
which implies
\begin{equation*}
(\rho_2-p\alpha+1 )(y_q-y_k) =(p-h+1)\alpha y_k+(1-\alpha)(y_h+\cdots+y_{p}),
\end{equation*}
i.e. (2) holds.
\end{proof}

\begin{lemma}\label{trij}
Let $G'$ be a connected threshold graph obtained from a connected threshold graph $G$ with degree sequence $(d_G(v_1),$ $ d_G(v_2), \cdots, d_G(v_n))$ by a Transformation $(p,q; h,k)$ with $k=q+1$.
If $\alpha \in [1/2,1)$, then $\rho_{\alpha}(G') \ge \rho_{\alpha}(G)$,
with equality if and only if $\alpha=1/2$ and $p=h+1=q+3$.
\end{lemma}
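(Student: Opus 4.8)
The plan is to compare $\rho_1=\rho_\alpha(G)$ and $\rho_2=\rho_\alpha(G')$ through the exact identity furnished by their two Perron vectors, and then to read off the sign of the resulting bilinear form from the structure of Definition \ref{def0} together with $\alpha\ge\frac12$. Writing $x,y$ for the Perron vectors of $A_\alpha(G),A_\alpha(G')$ and using that both matrices are symmetric, $y^{T}A_\alpha(G)x=\rho_1 y^{T}x$ and $y^{T}A_\alpha(G')x=\rho_2 y^{T}x$, so $(\rho_2-\rho_1)\,y^{T}x=S$, where $S:=y^{T}(A_\alpha(G')-A_\alpha(G))x$; since $x,y>0$ the numbers $\rho_2-\rho_1$ and $S$ share the same sign, and the lemma reduces to showing $S\ge0$. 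As the transformation only raises $d(p),d(q)$ and lowers $d(h),d(k)$ by $1$, expanding gives $S=\alpha(x_py_p+x_qy_q-x_hy_h-x_ky_k)+(1-\alpha)(x_qy_p+x_py_q-x_ky_h-x_hy_k)$. Next I would feed in the structure: with $k=q+1$, Definition \ref{def0} makes $h$ and $p$ adjacent in $G'$ to exactly $\{1,\dots,q\}$ and non-adjacent to each other, so Lemma \ref{uve}(ii) forces $y_h=y_p=:\eta$; Corollary \ref{y1y2} gives $x_q\ge x_k\ge x_h\ge x_p$ in $G$ and $y_q\ge y_k\ge y_h=y_p$ in $G'$. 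Substituting $\eta$ and regrouping, $S=\delta_xB+(\delta_yA-\Delta_xC)$ with $\delta_y=y_q-y_k\ge0$, $\delta_x=x_q-x_k\ge0$, $\Delta_x=x_h-x_p>0$, $A=\alpha x_q+(1-\alpha)x_p$, $B=\alpha y_k+(1-\alpha)\eta$, $C=\alpha\eta+(1-\alpha)y_k$, and $B-C=(2\alpha-1)(y_k-\eta)\ge0$; since $\delta_xB\ge0$ it suffices to show $\delta_yA-\Delta_xC\ge0$.

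For the remaining term I would invoke Lemma \ref{rho}: with $k=q+1$, \eqref{rho1} reads $(\rho_1-k\alpha)\Delta_x=R_\Delta$ and \eqref{rho2} reads $(\rho_2-p\alpha+1)\delta_y=R_y$, where $R_\Delta=2\alpha x_p+(1-\alpha)(x_q+x_k)$ and $R_y=(p-h+1)\alpha y_k+(1-\alpha)(y_h+\cdots+y_p)$, and both denominators are positive because $\Delta_x,\delta_y,R_\Delta,R_y>0$. Two majorizations, each a consequence only of the orderings above and of $\alpha\ge\frac12$, are the engine of the argument: $R_\Delta\le2A$ (indeed $2A-R_\Delta\ge(4\alpha-2)(x_k-x_p)\ge0$) and $R_y\ge2B$ (since $p-h+1\ge2$ and $y_h+\cdots+y_p\ge y_h+y_p=2\eta$). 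Suppose, for contradiction, $\rho_2\le\rho_1$. As $p\ge h+1\ge k+2$ we have $(p-k)\alpha\ge1$, whence $\rho_2-p\alpha+1\le\rho_1-p\alpha+1\le\rho_1-k\alpha$ and so $\delta_y\ge R_y/(\rho_1-k\alpha)$. Using also $\Delta_xC=R_\Delta C/(\rho_1-k\alpha)$, $R_yA\ge2BA$, $R_\Delta C\le2AC$ and $B\ge C$,
$$\delta_yA-\Delta_xC\ \ge\ \frac{R_yA-R_\Delta C}{\rho_1-k\alpha}\ \ge\ \frac{2A(B-C)}{\rho_1-k\alpha}\ \ge\ 0,$$
hence $S\ge\delta_xB\ge0$. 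But $\rho_2<\rho_1$ would give $S<0$, a contradiction; therefore $\rho_2\ge\rho_1$.

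For equality, $\rho_2=\rho_1$ forces $S=0$ and hence equality throughout the chain: $\delta_xB=0$ gives $\delta_x=0$, i.e. $x_q=x_k$, so by Lemma \ref{uve} $d_G(q)=d_G(k)$, that is $p=h+1$; and tightness of $\delta_y\ge R_y/(\rho_1-k\alpha)$ forces $\rho_2-p\alpha+1=\rho_1-k\alpha$, i.e. $(p-k)\alpha=1$, which with $p-k\ge2$ and $\alpha\ge\frac12$ gives $\alpha=\frac12$ and $p-k=2$; together with $k=q+1$ these yield $p=h+1=q+3$. Conversely, in this configuration I would deduce equality either by applying the inequality just proved to the reverse operation $G'\to G$ (which is again a transformation of the same type there), or by exhibiting a common equitable quotient matrix for $G$ and $G'$ via Lemma \ref{quot}.

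The crux is the estimate of the second paragraph. A naive test vector fails: $x^{T}(A_\alpha(G')-A_\alpha(G))x$ can be negative (for instance in the equality configuration, where at $\alpha=\frac12$ it equals $\tfrac12[(x_q+x_p)^2-(x_k+x_h)^2]<0$), so one is forced to work with the genuinely bilinear $S$ and to compare quantities attached to the two \emph{different} graphs $G$ and $G'$. What makes that comparison legitimate is the reduction of everything to the single denominator $\rho_1-k\alpha$, possible exactly because $\alpha\ge\frac12$ and $p-k\ge2$ force $(p-k)\alpha\ge1$; after that reduction the two clean bounds $R_\Delta\le2A$ and $R_y\ge2B$ close the proof. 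Locating these two inequalities, and recognizing that $y_h=y_p$ is what collapses $S$ into the three-term form, is the main difficulty.
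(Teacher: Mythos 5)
Your proof of the inequality $\rho_\alpha(G')\ge\rho_\alpha(G)$, and of the \emph{necessity} of the conditions $\alpha=\tfrac12$, $p=h+1=q+3$, is correct and is essentially the paper's own argument: the same bilinear form $S=y^{T}(A_\alpha(G')-A_\alpha(G))x$, the same four-term regrouping that collapses because $y_h=y_p$ (the paper gets $y_h=\cdots=y_p$ from Corollary \ref{y1y2}(ii), you from Lemma \ref{uve}(ii)), the same two identities \eqref{rho1}--\eqref{rho2} of Lemma \ref{rho}, and the same three estimates (your $B\ge C$, $R_\Delta\le 2A$, $R_y\ge 2B$ are the paper's \eqref{XX2}, \eqref{XX2-1} combined with monotonicity of the Perron entries). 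The only difference is bookkeeping: you assume $\rho_2\le\rho_1$ and use $(p-k)\alpha\ge1$ to replace the denominator $\rho_2-p\alpha+1$ by the larger $\rho_1-k\alpha$, whereas the paper keeps both denominators and rearranges so that $\rho_2-\rho_1$ appears multiplied by a manifestly positive quantity; the two organizations are interchangeable.

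The genuine gap is in the converse ("if") direction of the equality claim, and \emph{neither} of your two suggested routes works. First, the reverse operation $G'\to G$ is \emph{not} a Transformation $(p,q;h,k)$ in the sense of Definition \ref{def0}: that definition requires $q'<k'<h'<p'$, so the \emph{added} edge must be incident to the lowest-indexed (largest-degree) of the four vertices while the \emph{deleted} edge joins the two middle ones; but undoing your move deletes the edge at the lowest-indexed vertex $v_q$ and adds an edge between two of the interchangeable degree-$q$ vertices $v_{q+1},v_{q+2},v_{q+3}$ of $G'$, and no relabelling of those interchangeable vertices can repair the inequality $q'<k'$. Second, a common equitable quotient matrix need not exist: in the smallest equality configuration ($q=2$, $n=5$) one has $G=\tilde S_{5,7}=K_1\vee(K_3\cup K_1)$ and $G'=S_{5,7}=K_2\vee \bar K_3$, and the coarsest equitable quotients of $Q(G)$ and $Q(G')$ are a $3\times3$ matrix with characteristic polynomial $(\lambda-3)(\lambda^2-7\lambda+4)$ and the $2\times2$ matrix $\left(\begin{smallmatrix}5&3\\2&2\end{smallmatrix}\right)$ with characteristic polynomial $\lambda^2-7\lambda+4$: the top root $\tfrac{7+\sqrt{33}}{2}$ is shared, but no quotient matrix is, so "exhibiting a common equitable quotient" is not available and one must instead actually compute and compare characteristic polynomials (or cite the known coincidence $\rho_{1/2}(S_{n,n+2})=\rho_{1/2}(L_{n,n+2})$). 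In fairness, the paper has the same blind spot --- its chain of inequalities likewise yields only necessity, and sufficiency is asserted rather than proved --- so on everything the paper actually proves your argument matches it; but the converse direction you claim to close is not closed by the arguments you sketch.
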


\begin{proof}
Denote $t=p+q-h-k=p-h-1\geq 0$, $\rho_1=\rho_{\alpha}(G), \rho_2=\rho_{\alpha}(G')$. Let $x=(x_1, x_2, \cdots, x_n)^T$ and $y=(y_1, y_2, \cdots, y_n)^T$ be the Perron vectors of $G$ and
$G'$, respectively. By Corollary \ref{y1y2}, we have $x_1\ge x_2 \ge \cdots \ge x_n>0$ and $y_1 \ge y_2 \ge \cdots \ge y_n>0$. By $k=q+1$, we have $d_{G'}(v_h)=d_{G'}(v_{h+1})=\cdots=d_{G'}(v_p)$.
Furthermore, by $d_{G'}(v_h)=d_{G'}(v_{h+1})=\cdots=d_{G'}(v_p)$ and  (ii) in Corollary~\ref{y1y2},
 \begin{equation}\label{XX1}
 y_h=y_{h+1}=\cdots=y_p.
 \end{equation}
By the definition of Transformation $(p,q; h,k)$, $2\le q<k<h<p$, which implies that $y_k \ge y_h$. Hence by $\alpha \in [1/2,1)$, we have
\begin{equation}\label{XX2}
\alpha y_k+(1-\alpha)y_h \ge \alpha y_h+(1-\alpha)y_k,
\end{equation}
with equality if and only if $\alpha=\frac{1}{2}$ or $h=k+1$.
%(If $y_h=y_k$, then $d_{G'}(v_k)=d_{G'}(v_h)= k-1$, so $d_{G}(v_k)=h-1=k$).
 Similarly by $x_q \ge x_p$, we have
\begin{equation}\label{XX2-1}
\alpha x_q+(1-\alpha)x_p \ge \alpha x_p+(1-\alpha)x_q,
\end{equation}
with equality if and only if $\alpha=\frac{1}{2}$.
%(If $x_q=x_p$, then $d_{G}(v_q)=d_{G}(v_p)= q-1$, which contradicts the fact that $d_{G}(v_q)=p-2>q-1$).
On the one hand, by (2) in Lemma~\ref{rho} and $y_h=y_{h+1}=\cdots=y_p$,
\begin{equation}\label{XXa}
(\rho_{2}-p\alpha+1)(y_q-y_k)=(p-h+1)\alpha y_k+(1-\alpha)(y_h+\cdots+y_p)=(p-h+1)(\alpha y_k+(1-\alpha)y_h).
\end{equation}
On the other hand, by (1) in Lemma~\ref{rho} and $x_q\ge x_{q+1} \ge \cdots \ge x_k$,
\begin{equation}\label{XXb}
(\rho_{1}-k\alpha)(x_h-x_p)=(k-q+1)\alpha x_p+(1-\alpha)(x_q+\cdots+x_k)\le(k-q+1)(\alpha x_p+(1-\alpha)x_q).
\end{equation}
In addition,
\begin{equation}\label{XXc}
x^T A_{\alpha}(G)y=\alpha \sum \limits_{v_iv_j \in E(G)}(x_iy_i+x_jy_j)+
(1-\alpha)\sum\limits_{v_iv_j \in E(G)}(x_iy_j+x_jy_i).
\end{equation}
Hence, by $x_q \ge x_k$ and by \eqref{XX1}-%, \eqref{XX2}, \eqref{XX2-1}, \eqref{XXa}, \eqref{XXb} and
 \eqref{XXc},
\begin{equation}\label{eqgij}
 \begin{array}{lll}
&&x^T(\rho_2-\rho_1)y\\
&=&x^T(A_{\alpha}(G')-A_{\alpha}(G))y\\
&=&\alpha(x_py_p+x_qy_q-x_hy_h-x_ky_k)+(1-\alpha)(x_py_q+x_qy_p-x_hy_k-x_ky_h)\\
&=&(x_q-x_k)(\alpha y_k+(1-\alpha)y_h)+(y_q-y_k)(\alpha x_q+(1-\alpha)x_p)\\
&&+(y_p-y_h)(\alpha x_p+(1-\alpha)x_q)+(x_p-x_h)(\alpha y_h+(1-\alpha)y_k)\\
&\ge&(y_q-y_k)(\alpha x_q+(1-\alpha)x_p)+(x_p-x_h)(\alpha y_h+(1-\alpha)y_k) \\
&\ge &(\alpha x_q+(1-\alpha)x_p)(\alpha y_k+(1-\alpha)y_h)\frac{p-h+1}{\rho_2-p\alpha+1}\\
&&-(\alpha y_h+(1-\alpha)y_k)(\alpha x_p+(1-\alpha)x_q)\frac{k-q+1}{\rho_1-k\alpha}\\
&\ge&(\alpha x_q+(1-\alpha)x_p)(\alpha y_k+(1-\alpha)y_h)[\frac{p-h+1}{\rho_2-p\alpha+1}-\frac{k-q+1}{\rho_1-k\alpha}]\\
&=&(\alpha x_q+(1-\alpha)x_p)(\alpha y_k+(1-\alpha)y_h)\frac{(p-h+1)(\rho_1-k\alpha)-(k-q+1)
(\rho_2-p\alpha+1)}{(\rho_2-p\alpha+1)(\rho_1-k\alpha)}\\
&=&(\alpha x_q+(1-\alpha)x_p)(\alpha y_k+(1-\alpha)y_h)\frac{(k-q+1)(\rho_1-\rho_2)+(k-q+1)
(p\alpha-1-k\alpha)+t(\rho_1-k\alpha)}{(\rho_2-p\alpha+1)(\rho_1-k\alpha)}.\\
\end{array}
\end{equation}
 Hence,  \eqref{eqgij} yields
 %\begin{small}
\begin{equation}
 \begin{array}{lll}
&&(\rho_2-\rho_1)(x^Ty\frac{(\rho_2-p\alpha+1)(\rho_1-k\alpha)}{(\alpha x_q+(1-\alpha)x_p)(\alpha y_k+(1-\alpha)y_h)}+k-q+1)\\
 &\ge&(k-q+1)(p\alpha-1-k\alpha)+t(\rho_1-k\alpha)\\
 &\ge& 0.
\end{array}
\end{equation}
%\end{small}
It follows that $\rho_2\ge\rho_1$.

Moreover, $\rho_2=\rho_1$ holds if and only if $x_q=x_k$, $t=0$, $p\alpha-1-k\alpha=0$ and $\alpha=\frac{1}{2}$, therefore, $p=k+2=h+1=q+3$ and $\alpha=\frac{1}{2}$.
\end{proof}

\begin{lemma}\label{tr24}
Let $G'$ be the graph obtained from a connected threshold graph $G$ by a Transformation $(p,q;h,k)$ with $k=q+2$. If $\alpha \in [1/2,1)$ and $p>h+1$, then  $\rho_{\alpha}(G') > \rho_{\alpha}(G)$.
\end{lemma}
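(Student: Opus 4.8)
The plan is to avoid a direct eigenvector estimate and instead \emph{factor} the single transformation of type $(p,q;h,k)$ with $k=q+2$ into two transformations of the type $k=q+1$ already settled by Lemma~\ref{trij}, then compose the two resulting (weak) inequalities and check that at least one of them is strict. A direct computation of $x^{T}(\rho_{2}-\rho_{1})y$ as in Lemma~\ref{trij} is possible, but here $v_{h}$ and $v_{p}$ have different degrees in $G'$ (namely $q+1$ and $q$), so $y_{h}\neq y_{p}$ and the term $(y_{p}-y_{h})(\alpha x_{p}+(1-\alpha)x_{q})$ no longer vanishes; the factorization is a way of sidestepping exactly this extra term. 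Concretely, $G'=G-v_{h}v_{q+2}+v_{p}v_{q}$ deletes one edge at the ``high corner'' $v_{h}v_{q+2}$ and inserts one at the ``low corner'' $v_{p}v_{q}$, and the idea is to route this through an intermediate threshold graph $G_{1}$ obtained by first \emph{parking} the edge at $v_{q+1}$, so that each step moves only one endpoint by one position in the degree staircase.

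First I would record the local degree structure forced by Definition~\ref{def0}. Conditions (ii) and (iii) give $d_{G}(v_{h})=k=q+2$ and $d_{G}(v_{p})=q-1$, while every middle vertex $v_{i}$ with $h<i<p$ is adjacent to $v_{q}$ but not to $v_{q+2}=v_{k}$, so that $q\le d_{G}(v_{i})\le q+1$. Since $G$ is stepwise, the degree-$(q+1)$ middle vertices form an initial block: putting $h_{0}=\max\{\,i:d_{G}(v_{i})\ge q+1\,\}$ one gets $h\le h_{0}\le p-1$, with $d_{G}(v_{i})=q+1$ for $h<i\le h_{0}$ and $d_{G}(v_{i})=q$ for $h_{0}<i<p$. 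Note also $h_{0}\ge h\ge q+3$ because $h>k=q+2$. The hypothesis $p>h+1$ guarantees at least one middle vertex, which is what lets the parking vertex below be chosen distinct from $v_{p}$.

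Next I would split into two cases according to whether the degree-$q$ block is non-empty. If $h_{0}\le p-2$, set $G_{1}=G-v_{h}v_{q+2}+v_{h_{0}+1}v_{q+1}$; checking neighbourhoods shows that $G\to G_{1}$ realizes the transformation $(h_{0}+1,q+1;h,q+2)$ and $G_{1}\to G'$ realizes $(p,q;h_{0}+1,q+1)$, both with $k=q+1$, so Lemma~\ref{trij} yields $\rho_{\alpha}(G)\le\rho_{\alpha}(G_{1})\le\rho_{\alpha}(G')$. If instead $h_{0}=p-1$ (all middle vertices have degree $q+1$), set $G_{1}=G-v_{p-1}v_{q+1}+v_{p}v_{q}$; then $G\to G_{1}$ realizes $(p,q;p-1,q+1)$ and $G_{1}\to G'$ realizes $(p-1,q+1;h,q+2)$, and again Lemma~\ref{trij} gives $\rho_{\alpha}(G)\le\rho_{\alpha}(G_{1})\le\rho_{\alpha}(G')$.

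Finally I would upgrade one inequality to a strict one using the equality clause of Lemma~\ref{trij}, which forces $\alpha=\tfrac{1}{2}$ together with (in the notation of that lemma) the removed-edge vertex having index $q+2$. In the first case the step $(p,q;h_{0}+1,q+1)$ could be an equality only if $h_{0}+1=q+2$, i.e.\ $h_{0}=q+1$, which is impossible since $h_{0}\ge q+3$; in the second case the step $(p,q;p-1,q+1)$ could be an equality only if $p-1=q+2$, i.e.\ $p=q+3$, impossible since $p\ge h+1\ge q+4$. Hence in each case one of the two steps is strict, giving $\rho_{\alpha}(G')>\rho_{\alpha}(G)$. The main work, and the only place anything can go wrong, is verifying that both intermediate steps genuinely satisfy conditions (i)--(iii) of Definition~\ref{def0} for the stepwise forms of $G$ and of $G_{1}$ (after the degree-preserving reordering of equal-degree vertices that puts $G_{1}$ in stepwise form, which by Corollary~\ref{y1y2} and isomorphism-invariance of $\rho_{\alpha}$ changes nothing); this verification is where the block description of the middle vertices and the assumption $p>h+1$ are used.
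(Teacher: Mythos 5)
Your proposal is correct and follows essentially the same route as the paper's proof: the same case split (your $h_0$ is the paper's $p_1=d_G(v_{q+1})+1$, so your two cases $h_0\le p-2$ and $h_0=p-1$ are exactly the paper's), the same intermediate graphs $G_1$, and the same double application of Lemma~\ref{trij} with its equality clause to force strictness. The only cosmetic difference is that you show one specific step can never be an equality, while the paper assumes overall equality and derives contradictory conditions from both steps.
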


\begin{proof}
Let $d_G(v_1)\geq d_G(v_2)\geq \cdots\geq d_G(v_n)$ be the degree sequence of a connected threshold graph $G$.
Clearly $p>d_G(v_{q+1})+1 \ge h$. Denote by $p_1=d_G(v_{q+1})+1$. We consider the following two cases:

{\bf Case 1: $d_G(v_{q+1})<p-2$. }

Clearly $d_G(v_{q+1})\ge h-1$ and $p \ge d_G(v_{q+1})+3$. Let $G_1$ be the graph obtained from $G$ by a Transformation $(p_1+1,q+1;h,q+2)$. Hence, $G'$ can be obtained from $G_1$ by a Transformation $(p,q;p_1+1,q+1)$. By Lemma \ref{trij}, $\rho_{\alpha}(G') \ge \rho_{\alpha}(G_1)$ and $\rho_{\alpha}(G_1) \ge \rho_{\alpha}(G)$. Furthermore, by Lemma \ref{trij}, $\rho_{\alpha}(G') = \rho_{\alpha}(G)$ implies $p_1+1=q+4$ and  $p_1+1=q+2$, which is a contradiction. Therefore, $\rho_{\alpha}(G') >\rho_{\alpha}(G)$.

{\bf Case 2: $d_G(v_{q+1})=p-2$. }

Clearly $d_G(v_q)=p-2$, $d_G(v_{q+1})=p-2$ and $p\ge h+2$. Let $G_1$ be the graph from  $G$ by a Transformation $(p,q;p-1,q+1)$. Hence, $G_1$ is a connected threshold graph in $\mathcal{H}_{n, m}$. By Lemma \ref{trij},  $\rho_{\alpha}(G_1) \ge \rho_{\alpha}(G)$.
Then it is easy to see that $G'$ is the graph also obtained from $G_1$ by a Transformation $(p-1,q+1;h,q+2)$. Hence by Lemma \ref{trij}, $\rho_{\alpha}(G') \ge \rho_{\alpha}(G_1)\ge \rho_{\alpha}(G)$.  Furthermore, by Lemma \ref{trij}, $\rho_{\alpha}(G') = \rho_{\alpha}(G)$ implies $p=q+3$ and $p-1=q+4$, which is a contradiction. Therefore, $\rho_{\alpha}(G') >\rho_{\alpha}(G)$.
\end{proof}

\noindent In order to prove the main results, we also introduce  the other two general transformations:

\begin{defi}\label{def}
Let $A(G)=(a_{ij})_{n\times n}$ %\in \mathcal{A}^*(n,m)$
 be the stepwise adjacency matrix of a connected threshold graph $G$.
 % and $l$ be a nonnegative integer.
We say the graph $G'$ is obtained from $G$ by a Transformation $(p,q;h,k;l+1,1)$, if there exist four positive integers $p$, $q$, $h$, $k$ and one nongective integer $l$ such that the following conditions holds:  \\
{\em (i).}  $q<k \le k+l<h<p-l$.\\
{\em (ii).}  $a_{iq}=0$, $a_{ij}=1$ whenever for all integers $p-1\le i\le p$, $j<q$; $a_{iq}=1$ whenever $i<p-l$.\\
{\em (iii).}  $a_{hi}=1$, $a_{hj}=0$, $a_{h+1,i}=0$ whenever for all integers $k\le i\le k+l]$, $j>k+l$.\\
{\em (iv).}  $$G'=G-\sum\limits_{j=0}^{l} v_{h}v_{k+j}+\sum\limits_{j=0}^{l} v_{p-j}v_{q}.$$
\end{defi}
\noindent Clearly, $G'$ is also a connected threshold graph and
$$A(G')=A(G)-\sum\limits_{j=0}^{l} E_{h,k+j}+\sum\limits_{j=0}^{l} E_{p-j,q}.$$
\begin{rem}
%Clearly $A'$  can be  obtained from $A$ by interchanging the $(p-s,q)$ and $(h,k+s)$ entries and
%interchanging the $(q,p-s)$ and $(k+s,h)$ entries for all integers $s \in[0,l]$.
If $l=0$, then the Transformation $(p,q;h,k;l+1,1)$ is actually the Transformation $(p,q;h,k)$.
\end{rem}

\begin{rem}
Let $G$ be a connected threshold graph with the non-increasing degree sequence $(d_1, \ldots, d_n)$. If $G'$ is obtained from $G$ by a Transformation $(p,q;h,k;l+1,1)$, then the non-increasing degree sequence $d'=(d'_1, \ldots, d'_n)$ of $G'$ is the same as  the degree sequence of $G$ except $d'_{q}=p-1$, $d'_{h}=k-1$, $d'_{k}=\cdots=d'_{k+l}=h-2$ and $d'_{p}=\cdots=d'_{p-l}=q$.
 %then we say $G'$ can be obtained by Transformation$(p,q;h,k;l+1,1)$ on $G$). we also call this transformation as Transformation from $G$ to $G'$ with respect to $(p,q;h,k;l+1,1)$.
\end{rem}

%Obviously,
%%Assume $A=[a_{ij}]$ and $A'=[a'_{ij}]$ are the stepwise adjacency matrix of $G$ and $G'$ respectively, then we have
%\\ {\em (1)}. $q<k$, $k+l<h$, $h<p$, and
%\\ {\em (2).} $a_{sq}=1$, $a_{sj}=0$ whenever $j>q$, $a_{iq}=0$ whenever $i>p+l$, for all integers $s \in [p,p+l]$, and
%\\ {\em (3).} $a_{hs}=0$, $a_{hj}=1$ whenever $j<k$, $a_{is}=1$ whenever $s<i<h$, , for all integers $s \in [j,j+l]$.

\noindent For example, there is a concrete transformation  from $L_{7, 12}$ to $S_{7, 12}$ with respect to $(7,2;5,3; 2, 1)$ which  is depicted in Figure  \ref{tt3}.
\begin{figure}[htbp]
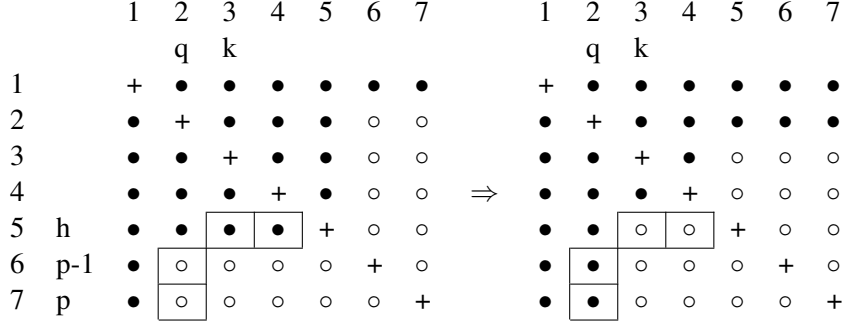

\centering
\begin{tabular}{lllllllllllllllll}
  &  &1          &  2 &  3  &  4 & 5 & 6 & 7 & &         1 &   2 &  3 &  4 &5  &6  & 7 \\
    &  &          &  q &  k  &   &  &  &  & &          &   q &  k &   &  &  &  \\
1 & & +           & $\bullet$   & $\bullet$   & $\bullet$   & $\bullet$ & $\bullet$ & $\bullet$  & \multirow{7}{*}{ $\Rightarrow $ } & +           & $\bullet$   & $\bullet$   & $\bullet$   & $\bullet$ & $\bullet$ & $\bullet$ \\
2 & & $\bullet$   & +           & $\bullet$   & $\bullet$   & $\bullet$ & $\circ$ & $\circ$  & &$\bullet$   & +           & $\bullet$   & $\bullet$   & $\bullet$ & $\bullet$   & $\bullet$  \\
3 & & $\bullet$   & $\bullet$   & +           & $\bullet$   & $\bullet$   & $\circ$   & $\circ$  & & $\bullet$   & $\bullet$   & +           & $\bullet$   & $\circ$ & $\circ$   & $\circ$ \\
4 & & $\bullet$   & $\bullet$   & $\bullet$   & +           & $\bullet$   & $\circ$   & $\circ$    & & $\bullet$   & $\bullet$   & $\bullet$   & +           & $\circ$ & $\circ$   & $\circ$ \\ \cline{5-6}  \cline{13-14}
5 &h & $\bullet$   & \multicolumn{1}{l|}{$\bullet$} & \multicolumn{1}{l|}{$\bullet$}   & \multicolumn{1}{l|}{$\bullet$}   & +         & $\circ$   & $\circ$  & & $\bullet$   & \multicolumn{1}{l|}{$\bullet$} & \multicolumn{1}{l|}{$\circ$} & \multicolumn{1}{l|}{$\circ$} & +         & $\circ$   & $\circ$  \\ \cline{4-6}  \cline{12-14}
6 &p-1 & \multicolumn{1}{l|}{$\bullet$} & \multicolumn{1}{l|}{$\circ$} & $\circ$     & $\circ$     & $\circ$   & +         & $\circ$  & & \multicolumn{1}{l|}{$\bullet$} & \multicolumn{1}{l|}{$\bullet$}   & $\circ$     & $\circ$     & $\circ$   & +         & $\circ$    \\ \cline{4-4}
\cline{12-12}
7 & p& \multicolumn{1}{l|}{$\bullet$} & \multicolumn{1}{l|}{$\circ$} & $\circ$     & $\circ$     & $\circ$   & $\circ$   & +    & & \multicolumn{1}{l|}{$\bullet$} & \multicolumn{1}{l|}{$\bullet$}   & $\circ$     & $\circ$     & $\circ$   & $\circ$   & +        \\ \cline{4-4}
\cline{12-12}
\end{tabular}
\caption{Transformation $(7, 2; 5, 3; 2,1)$ from $L_{7, 12}$ to $S_{7, 12}$}
%\end{table}
\label{tt3}
\end{figure}

\begin{lemma}\label{trg}
 Let $G'$ be the graph obtained from a connected threshold graph $G$ by a Transformation $(p,q;h,k;l+1,1)$. If $\alpha \in [1/2,1)$ and $k=q+1$, then $\rho_{\alpha}(G') \ge \rho_{\alpha}(G)$, with equality if and only if $\alpha=1/2$, $l=0$ and $p=h+1=q+3$.
\end{lemma}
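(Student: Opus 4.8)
The plan is to run a direct Perron-vector comparison, exactly parallel to the proof of Lemma~\ref{trij} (the case $l=0$) but carrying the multiplicity $l+1$ through the computation. Write $\rho_1=\rho_\alpha(G)$, $\rho_2=\rho_\alpha(G')$ and let $x=(x_1,\dots,x_n)^T$, $y=(y_1,\dots,y_n)^T$ be the corresponding Perron vectors, both non-increasing by Corollary~\ref{y1y2}(i). First I would extract the degree data forced by Definition~\ref{def} together with $k=q+1$. In $G$, the vertices $v_k,\dots,v_{k+l}$ share the neighbourhood $\{v_1,\dots,v_h\}$ (excluding themselves), hence common degree $h-1$, and $v_{p-l},\dots,v_p$ share the neighbourhood $\{v_1,\dots,v_{q-1}\}$, hence common degree $q-1$. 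In $G'$, the vertices $v_k,\dots,v_{k+l}$ share degree $h-2$, while $v_h,v_{h+1},\dots,v_p$ all have degree $q$: the untouched ``middle'' vertices $v_{h+1},\dots,v_{p-l-1}$ already have degree $q$, $v_h$ drops from $k$ to $k-1=q$, and $v_{p-l},\dots,v_p$ rise from $q-1$ to $q$. By Corollary~\ref{y1y2}(ii) this yields $x_k=\dots=x_{k+l}=:\xi$, $x_{p-l}=\dots=x_p=:\eta$, $y_k=\dots=y_{k+l}=:\xi'$, $y_{p-l}=\dots=y_p=:\eta'$, together with the crucial extra coincidence $y_h=\eta'$. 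Establishing these equalities from the (notation-heavy) Definition~\ref{def} is the first place to be careful.

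With these in hand I would compute $(\rho_2-\rho_1)x^Ty=x^T(A_\alpha(G')-A_\alpha(G))y$. Since $A(G')-A(G)=\sum_{j=0}^l(E_{p-j,q}-E_{h,k+j})$, while the diagonal change of $D$ is $+(l+1)$ at $q$, $-(l+1)$ at $h$, $-1$ at each of $k,\dots,k+l$ and $+1$ at each of $p-l,\dots,p$, every sum over $j$ collapses, by the equal-entry relations, to a single term times $l+1$; this gives $(\rho_2-\rho_1)x^Ty=(l+1)B$. Then $B$ regroups, exactly as in the array in the proof of Lemma~\ref{trij}, into four summands. Two of them are controlled directly: $(x_q-\xi)(\alpha\xi'+(1-\alpha)\eta')\ge0$ because $x_q\ge x_k=\xi$, and the summand with factor $(\eta'-y_h)$ vanishes because $y_h=\eta'$. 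What remains is the bound $B\ge (y_q-y_k)(\alpha x_q+(1-\alpha)x_p)+(x_p-x_h)(\alpha y_h+(1-\alpha)y_k)$.

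The third step feeds in the eigenvalue identities. Formula (2) of Lemma~\ref{rho} holds verbatim here, since the local structure at $v_q$ (degree $p-1$) and at $v_k$ (degree $h-2$) in $G'$ is unchanged; combined with $y_h=\dots=y_p=\eta'$ it reads $(\rho_2-p\alpha+1)(y_q-y_k)=(p-h+1)(\alpha y_k+(1-\alpha)y_h)$. Formula (1), however, must be re-derived: because now $d_G(v_h)=k+l$, subtracting the $v_p$-equation from the $v_h$-equation of $A_\alpha(G)x=\rho_1 x$ gives $(\rho_1-(k+l)\alpha)(x_h-x_p)=(l+2)\alpha x_p+(1-\alpha)(x_q+(l+1)\xi)$, whose right-hand side is at most $(l+2)(\alpha x_p+(1-\alpha)x_q)$ since $\xi=x_k\le x_q$. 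Substituting both identities into the leftover bound, using $\alpha\ge\tfrac12$ to replace the mismatched convex combinations (exactly as in \eqref{XX2} and \eqref{XX2-1}), and clearing the positive denominators $\rho_2-p\alpha+1$ and $\rho_1-(k+l)\alpha$, reduces the whole inequality to the sign of the numerator $(l+2)(\rho_1-\rho_2)+(l+2)\big((p-k-l)\alpha-1\big)+t\,(\rho_1-(k+l)\alpha)$, where $t:=p-h-l-1$. Here $\rho_1-(k+l)\alpha>0$ (from the $v_h$-equation), $t\ge0$ by condition (i) of Definition~\ref{def}, and $p-k-l\ge l+2\ge2\ge 1/\alpha$ forces $(p-k-l)\alpha-1\ge0$; moving the $(\rho_1-\rho_2)$ term to the left then gives $(\rho_2-\rho_1)\cdot(\text{positive})\ge0$, i.e. $\rho_2\ge\rho_1$.

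For the equality case I would track which inequalities became tight: $x_q=\xi=x_k$ (from the dropped first summand and from $\xi\le x_q$), the two $\alpha\ge\tfrac12$ replacements forcing $\alpha=\tfrac12$, and the numerator being zero, which needs both $t=0$ and $(p-k-l)\alpha=1$. The last identity is decisive: with $p-k-l\ge l+2$ and $1/\alpha\le2$ it forces $l+2\le 2$, hence $l=0$; then $p-k=2$, $\alpha=\tfrac12$ and $t=0$ give $p=h+1=q+3$, which is precisely the stated condition and reduces the situation to the equality case of Lemma~\ref{trij}. The main obstacle is thus not any single estimate but the bookkeeping: verifying the Perron-entry coincidences of the opening paragraph and the modified identity (the coefficient $l+2$ and the shift $(k+l)\alpha$ in place of $k\alpha$). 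Once these are in place, the inequality chain and the equality analysis follow the template of Lemma~\ref{trij}, with the factor $l+1$ carried harmlessly throughout.
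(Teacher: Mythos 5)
Your proposal is correct and follows essentially the same route as the paper's own proof: the identical collapse of $x^{T}\bigl(A_{\alpha}(G')-A_{\alpha}(G)\bigr)y$ into $(l+1)$ times the four-term expression, the same two eigen-equation identities (the second re-derived with $d_G(v_h)=k+l$, which the paper uses only implicitly through the denominator $\rho_1-\alpha(k+l)$ in \eqref{eqgl1}), and the same final numerator $(l+2)(\rho_1-\rho_2)+(l+2)\bigl(\alpha(p-k-l)-1\bigr)+t\bigl(\rho_1-\alpha(k+l)\bigr)$ with the same equality analysis. The only blemish is the remark in your first paragraph that $v_h$ ``drops from $k$ to $k-1$'' (in fact $d_G(v_h)=k+l$, as you yourself state and use correctly in your third paragraph), which does not affect the argument.
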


\begin{proof}
Let $\rho_1=\rho_{\alpha}(G)$ and $ \rho_2=\rho_{\alpha}(G')$.
By Definition \ref{def} and $k=q+1$,  we have $d_{G'}(v_h)=d_{G'}(v_p)$.
By Corollary \ref{y1y2}, we have  $x_1\ge x_2 \ge \cdots \ge x_n>0$ and $y_1 \ge y_2 \ge \cdots \ge y_n>0$.
By $d_G(v_k)=d_G(v_{k+1})=\cdots=d_G(v_{k+l})$,  we have  $x_{k}=x_{k+1}=\cdots=x_{k+l}$.
By $d_G(v_{p-l})=d_G(v_{p-l+1})=\cdots=d_G(v_{p})$, we have  $x_{p}=x_{p-1}=\cdots=x_{p-l}$.
Moreover, $d_{G'}(v_k)=d_{G'}(v_{k+1})=\cdots=d_{G'}(v_{k+l})$ implies that
$y_{k}=y_{k+1}=\cdots=y_{k+l}$.
$d_{G'}(v_{p})=d_{G'}(v_{p-1})=\cdots=d_{G'}(v_{h})$ implies that $y_{p}=y_{p-1}=\cdots=y_{h}$.

%In addition,
%\begin{equation}\label{eqgg2}
%x^T A_{\alpha}(G)y=\alpha \sum \limits_{v_iv_j \in E(G)}(x_iy_i+x_jy_j)+
%(1-\alpha)\sum\limits_{v_iv_j \in E(G)}(x_iy_j+x_jy_i),
%\end{equation}

Hence, by  \eqref{XXc},
\begin{equation}\label{eqgl1}
 \begin{array}{lll}
&&x^T(\rho_2-\rho_1)y\\
&=&x^T(A_{\alpha}(G')-A_{\alpha}(G))y\\
&=&\sum\limits_{j=0}^l[\alpha(x_{p-j}y_{p-j}+x_qy_q-x_hy_h-x_{k+j}y_{k+j})\\
&& +(1-\alpha)(x_{p-j}y_q+x_qy_{p-j}-x_hy_{k+j}-x_{k+j}y_h)]\\
&=&(l+1)[\alpha(x_py_p+x_qy_q-x_hy_h-x_ky_k)+(1-\alpha)(x_py_q+x_qy_p-x_hy_k-x_ky_h)]\\
&=&(l+1)[(x_q-x_k)(\alpha y_k+(1-\alpha)y_h)+(y_q-y_k)(\alpha x_q+(1-\alpha)x_p)\\
&&+(y_p-y_h)(\alpha x_p+(1-\alpha)x_q)+(x_p-x_h)(\alpha y_h+(1-\alpha)y_k)] \\
&\ge& (l+1)[(y_q-y_k)(\alpha x_q+(1-\alpha)x_p)+
(x_p-x_h)(\alpha y_h+(1-\alpha)y_k)]\\
&\ge& (l+1)[\frac{p-h+1}{\rho_2-\alpha p+1}(\alpha y_k+(1-\alpha)y_h)(\alpha x_q+(1-\alpha)x_p)\\
&&-\frac{k+l-q+1}{\rho_1-\alpha(k+l)}(\alpha x_p+(1-\alpha)x_q)(\alpha y_h+(1-\alpha)y_k)]\\
&\ge&  (l+1)(\alpha y_k+(1-\alpha)y_h)(\alpha x_q+(1-\alpha)x_p)(\frac{p-h+1}{\rho_2-\alpha p+1}-\frac{k+l-q+1}{\rho_1-\alpha(k+l)})\\
&=&  (l+1)(\alpha y_k+(1-\alpha)y_h)(\alpha x_q+(1-\alpha)x_p)\frac{(p-h+1)(\rho_1-\alpha(k+l))-(k+l-q+1)(\rho_2-\alpha p+1)}{(\rho_2-\alpha p+1)(\rho_1-\alpha(k+l))}\\
%&=&  (l+1)(\alpha y_k+(1-\alpha)y_h)(\alpha x_q+(1-\alpha)x_p)\frac{(p-h+1)(\rho_1-\alpha(k+l))-(k+l-q+1)(\rho_2-\alpha p+1)}{(\rho_2-\alpha p+1)(\rho_1-\alpha(k+l))}\\
&=&  (l+1)(\alpha y_k+(1-\alpha)y_h)(\alpha x_q+(1-\alpha)x_p)\\
&&\times \frac{(k+l-q+1)(\rho_1-\rho_2)+(k+l-q+1)(\alpha(p-k-l)-1)+(p+q-h-k-l)(\rho_1-\alpha(k+l))}{(\rho_2-\alpha p+1)(\rho_1-\alpha(k+l))}.
\end{array}
\end{equation}

%The right of Equation \eqref{eqgg1} is similar to Equation \eqref{eqgij}, hence
%the rest of the proof is similar to Lemma \ref{trij}.

Hence by \eqref{eqgl1}, we have
\begin{eqnarray*}
&&(\rho_2-\rho_1)[x^Ty\frac{(\rho_2-\alpha p+1)(\rho_1-\alpha(k+l))}{(l+1)(\alpha y_k+(1-\alpha)y_h)(\alpha x_q+(1-\alpha)x_p)}+k+l-q+1]\\
&\ge& (k+l-q+1)(\alpha(p-k-l)-1)+(p+q-h-k-l)(\rho_1-\alpha(k+l))\\
&\geq& 0,
\end{eqnarray*} which implies $\rho_2\geq\rho_1$.

Further, $\rho_2=\rho_1$ holds if and only if $x_q=x_k$, $p+q-h-k-l=0$, $\alpha(p-k-l)-1=0$ and $\alpha=\frac{1}{2}$, therefore, $l=0$, $p=k+2=h+1=q+3$ and $\alpha=\frac{1}{2}$.
\end{proof}

\noindent We also need the following transformation.
\begin{defi}\label{def2}
Let $A(G)=(a_{ij})_{n\times n}$ %\in \mathcal{A}^*(n,m)$
be the stepwise adjacency matrix of a connected threshold graph $G$.
We say the graph $G'$ is obtained from $G$ by a Transformation $(p,q;h,k;1,l+1)$, if there exist four positive integers $p$, $q$, $h$, $k$  and one nonnegative integer $l$ such that the following conditions holds:  \\
{\em (i).}  $2\le q-l \le q<k <h-l \le h<p$.\\
{\em (ii).}  $a_{ps}=0$, $a_{pj}=1$ whenever $j<q-l$ and $a_{is}=1$ whenever $s<i<p$, for all integers $q-l\le s\le q$.\\
{\em (iii).}  $a_{sk}=1$, $a_{sj}=0$ whenever $j>k$ and $a_{ik}=0$ whenever $i>h$, for all integers $h-l\le s\le h$.\\
{\em (iv).}  $$G'=G-\sum\limits_{j=0}^{l} v_{h-j}v_{k}+\sum\limits_{j=0}^{l} v_{p}v_{q-j}.$$
\end{defi}
Clearly, $G'$ is also a connected threshold graph and
$$A(G')=A(G)-\sum\limits_{j=0}^{l} E_{h-j,k}+\sum\limits_{j=0}^{l} E_{p,q-j}.$$
\noindent For example, $S_{9, 23}$ is the graph obtained from $G_{9, 23}$  by a Transformation $(9,3;8,4;1,2)$,  which is depicted in Figure  ~\ref{tt923}.

\begin{figure}[htbp]
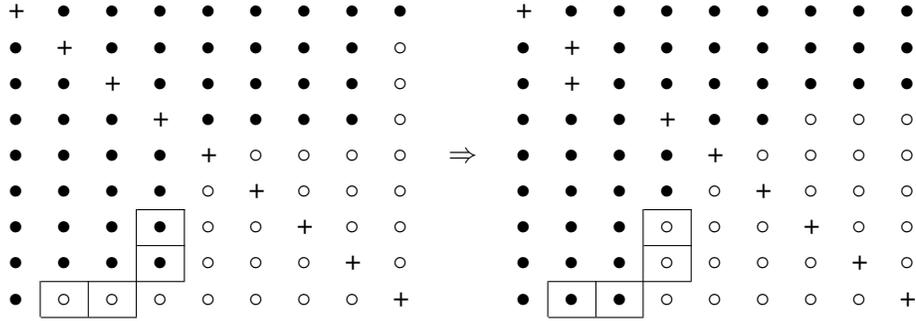

%\begin{table}[htbp]
\centering
\begin{tabular}{lllllllllllllllllll}
+           & $\bullet$   & $\bullet$   & $\bullet$   & $\bullet$ & $\bullet$ & $\bullet$ & $\bullet$ & $\bullet$
&\multirow{9}{*}{ $ \Rightarrow $ }  & +          & $\bullet$                      & $\bullet$   & $\bullet$   & $\bullet$ & $\bullet$ & $\bullet$ & $\bullet$ & $\bullet$  \\
$\bullet$   & +           & $\bullet$   & $\bullet$   & $\bullet$ & $\bullet$ & $\bullet$ & $\bullet$ &  $\circ$  & & $\bullet$  & +           & $\bullet$   & $\bullet$   & $\bullet$ & $\bullet$ & $\bullet$ & $\bullet$ & $\bullet$\\
$\bullet$   & $\bullet$   & +           & $\bullet$   & $\bullet$ & $\bullet$ & $\bullet$ & $\bullet$ & $\circ$  & &$\bullet$      & +           & $\bullet$   & $\bullet$ & $\bullet$ & $\bullet$ & $\bullet$ & $\bullet$  & $\bullet$\\
$\bullet$   & $\bullet$   & $\bullet$   & +           & $\bullet$ & $\bullet$ & $\bullet$   & $\bullet$   & $\circ$  & & $\bullet$   & $\bullet$   & $\bullet$   & +           & $\bullet$ & $\bullet$ & $\circ$  & $\circ$  & $\circ$\\
$\bullet$   & $\bullet$   & $\bullet$   & $\bullet$   & +         & $\circ$   & $\circ$   & $\circ$   & $\circ$  & & $\bullet$   & $\bullet$   & $\bullet$   & $\bullet$   & +         & $\circ$   & $\circ$   & $\circ$   & $\circ$   \\
$\bullet$   & $\bullet$   & $\bullet$   & $\bullet$   & $\circ$   & +         & $\circ$   & $\circ$   & $\circ$ & & $\bullet$   & $\bullet$   & $\bullet$   & $\bullet$   & $\circ$   & +         & $\circ$   & $\circ$   & $\circ$   \\ \cline{4-4} \cline{14-14}
$\bullet$   & $\bullet$   & \multicolumn{1}{l|}{$\bullet$} & \multicolumn{1}{l|}{$\bullet$}   & $\circ$   & $\circ$   & +         & $\circ$   & $\circ$  & & $\bullet$   & $\bullet$   & \multicolumn{1}{l|}{$\bullet$} & \multicolumn{1}{l|}{$\circ$} & $\circ$   & $\circ$   & +         & $\circ$   & $\circ$ \\ \cline{4-4} \cline{14-14}
$\bullet$   & $\bullet$   & \multicolumn{1}{l|}{$\bullet$} & \multicolumn{1}{l|}{$\bullet$}   & $\circ$   & $\circ$   & $\circ$   & +         & $\circ$  & &  $\bullet$   & $\bullet$   & \multicolumn{1}{l|}{$\bullet$} & \multicolumn{1}{l|}{$\circ$} & $\circ$   & $\circ$   & $\circ$   & +         & $\circ$  \\ \cline{2-4} \cline{12-14}
\multicolumn{1}{l|}{$\bullet$} & \multicolumn{1}{l|}{$\circ$} & \multicolumn{1}{l|}{$\circ$} & $\circ$     & $\circ$   & $\circ$   & $\circ$   & $\circ$   & +      & & \multicolumn{1}{l|}{$\bullet$} & \multicolumn{1}{l|}{$\bullet$}   & \multicolumn{1}{l|}{$\bullet$}   & $\circ$     & $\circ$   & $\circ$   & $\circ$   & $\circ$   & +        \\ \cline{2-3} \cline{12-13}
\end{tabular}
\caption{\mbox{Transformation} $ (9,3;8,4;1,2)$ from $G_{9,23}$ to $S_{9,23}$}
%\end{table}
\centering
\label{tt923}
\end{figure}

\begin{lemma}\label{trg1}
 Let $G'$ be obtained by a Transformation $(p,q;h,k;1,l+1)$ from a connected threshold graph $G$. If $\alpha \in [1/2,1)$ and $k=q+1$, then $\rho_{\alpha}(G') \ge \rho_{\alpha}(G)$, with equality if and only if $\alpha=1/2$, $l=0$, and $p=h+1=q+3$.
\end{lemma}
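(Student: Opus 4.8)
The plan is to mirror the proof of Lemma~\ref{trg}, with the roles of the ``source'' and ``sink'' vertices interchanged. Write $\rho_1=\rho_\alpha(G)$, $\rho_2=\rho_\alpha(G')$, and let $x=(x_1,\dots,x_n)^T$, $y=(y_1,\dots,y_n)^T$ be the Perron vectors of $A_\alpha(G)$ and $A_\alpha(G')$. By Corollary~\ref{y1y2} both are non-increasing, and equal degrees force equal entries. In $G$ the blocks $v_{q-l},\dots,v_q$ and $v_{h-l},\dots,v_h$ have constant degree, so $x_{q-l}=\cdots=x_q$ and $x_{h-l}=\cdots=x_h$. The decisive step is to pin down the matching block in $G'$: using $k=q+1$, the vertices $v_{h-l},\dots,v_h$ lose their edge to $v_k$ and become adjacent to exactly $v_1,\dots,v_q$ (degree $q$), while $v_p$ gains the edges to $v_{q-l},\dots,v_q$ and likewise ends adjacent to exactly $v_1,\dots,v_q$. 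Reading Definition~\ref{def2}(ii)--(iii) off the stepwise matrix, every intermediate vertex $v_{h+1},\dots,v_{p-1}$ is adjacent to $v_q$ but not to $v_k=v_{q+1}$, so it too has degree $q$, unchanged from $G$. Hence $y_{q-l}=\cdots=y_q$ and, crucially, $y_{h-l}=\cdots=y_h=\cdots=y_p$.

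With these equalities I would expand $x^T\!\left(A_\alpha(G')-A_\alpha(G)\right)y=(\rho_2-\rho_1)\,x^Ty$ exactly as in \eqref{XXc} and \eqref{eqgl1}. Since the $l+1$ removed edges collapse onto the representatives $x_h,x_k,y_h,y_k$ and the $l+1$ added edges onto $x_p,x_q,y_p,y_q$, the difference equals
\[
(l+1)\big[\alpha(x_py_p+x_qy_q-x_hy_h-x_ky_k)+(1-\alpha)(x_py_q+x_qy_p-x_hy_k-x_ky_h)\big],
\]
the same four-term form as for a single transformation. The algebraic identity rewriting this as
\[
(x_q-x_k)(\alpha y_k+(1-\alpha)y_h)+(y_q-y_k)(\alpha x_q+(1-\alpha)x_p)+(y_p-y_h)(\alpha x_p+(1-\alpha)x_q)+(x_p-x_h)(\alpha y_h+(1-\alpha)y_k)
\]
has a vanishing third summand (because $y_p=y_h$) and a nonnegative first summand (because $x_q\ge x_k$); dropping the latter leaves a lower bound in terms of $(y_q-y_k)$ and $(x_p-x_h)$.

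Next I would produce the two auxiliary identities as in Lemma~\ref{rho}. Subtracting the $v_k$-equation (degree $h-l-2$ in $G'$) from the $v_q$-equation (degree $p-1$) and using $y_{h-l}=\cdots=y_p$ gives $(\rho_2-p\alpha+1)(y_q-y_k)=(p-h+l+1)\big(\alpha y_k+(1-\alpha)y_h\big)$; subtracting the $v_p$-equation (degree $q-l-1$ in $G$) from the $v_h$-equation (degree $k$), using $x_{q-l}=\cdots=x_q$ and $x_k\le x_q$, gives $(\rho_1-k\alpha)(x_h-x_p)\le(l+2)\big(\alpha x_p+(1-\alpha)x_q\big)$. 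Both $\rho_2-p\alpha+1>0$ and $\rho_1-k\alpha>0$ follow by inspecting those equations. Substituting, and using $\alpha\ge\tfrac12$ with $x_q\ge x_p$, $y_k\ge y_h$ to replace $(\alpha x_p+(1-\alpha)x_q)(\alpha y_h+(1-\alpha)y_k)$ by the larger $C_1:=(\alpha x_q+(1-\alpha)x_p)(\alpha y_k+(1-\alpha)y_h)$, I obtain
\[
(\rho_2-\rho_1)\,x^Ty\ge (l+1)C_1\left(\frac{p-h+l+1}{\rho_2-p\alpha+1}-\frac{l+2}{\rho_1-k\alpha}\right).
\]
Writing $t=p-h-1\ge0$ and $p-h+l+1=(l+2)+t$, the bracketed numerator becomes $(l+2)(\rho_1-\rho_2)+(l+2)(\alpha(p-k)-1)+t(\rho_1-k\alpha)$, so moving the $(\rho_1-\rho_2)$ term across yields
\[
(\rho_2-\rho_1)\Big[x^Ty+\tfrac{(l+1)(l+2)C_1}{(\rho_2-p\alpha+1)(\rho_1-k\alpha)}\Big]\ge \tfrac{(l+1)C_1}{(\rho_2-p\alpha+1)(\rho_1-k\alpha)}\big[(l+2)(\alpha(p-k)-1)+t(\rho_1-k\alpha)\big].
\]
Since $p-k\ge2$ and $\alpha\ge\tfrac12$ give $\alpha(p-k)-1\ge0$, and $t\ge0$, the right side is nonnegative while the left bracket is positive, so $\rho_2\ge\rho_1$.

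For equality I would trace back each tight inequality: $x_q=x_k$ forces $\deg_G v_q=\deg_G v_k$, i.e. $p=h+1$ and $t=0$; the monotonicity replacement forces $\alpha=\tfrac12$; and $\alpha(p-k)=1$ with $p-k\ge2$ then forces $p-k=2$, i.e. $p=k+2=q+3$. Consequently $h=q+2$, $k=q+1$, and Definition~\ref{def2}(i) ($k<h-l$) forces $l<1$, hence $l=0$. Thus equality holds precisely when $\alpha=\tfrac12$, $l=0$, and $p=h+1=q+3$, as claimed. The main obstacle is the first step, namely verifying that the intermediate vertices $v_{h+1},\dots,v_{p-1}$ all have degree $q$ in $G'$, so that the block $y_{h-l}=\cdots=y_p$ is genuinely constant and the $y$-identity collapses to a clean multiple of $\alpha y_k+(1-\alpha)y_h$; once this is in place, the remaining work is the same bookkeeping as in Lemma~\ref{trg}, requiring only care with inequality directions when substituting the $x$-bound and performing the $\alpha\ge\tfrac12$ replacement.
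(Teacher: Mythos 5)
Your proposal is correct and follows essentially the same route as the paper's own proof: the same block-constancy of Perron entries (in particular $y_{h-l}=\cdots=y_p$ from the equal degrees $q$ in $G'$), the same collapse of the $(l+1)$-fold edge exchange to the four-term form, the same two eigen-equation identities giving $(\rho_2-p\alpha+1)(y_q-y_k)=(p-h+l+1)(\alpha y_k+(1-\alpha)y_h)$ and $(\rho_1-k\alpha)(x_h-x_p)\le(l+2)(\alpha x_p+(1-\alpha)x_q)$, the same $\alpha\ge\tfrac12$ product replacement, and the same final rearrangement and equality analysis. All constants and inequality directions check out, so no gap.
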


\begin{proof}
Let $\rho_1=\rho_{\alpha}(G), \rho_2=\rho_{\alpha}(G')$.
By Definition \ref{def2} and $k=q+1$, $d_{G'}(v_h)=d_{G'}(v_p)$.
By Corollary \ref{y1y2}, $x_1\ge x_2 \ge \cdots \ge x_n>0$ and $y_1 \ge y_2 \ge \cdots \ge y_n>0$.
By $d_G(v_{q-l})=d_G(v_{q-l+1})=\cdots=d_G(v_{q})$, $x_{q}=x_{q-1}=\cdots=x_{q-l}$.
By $d_G(v_{h-l})=d_G(v_{h-l+1})=\cdots=d_G(v_{h})$, $x_{h}=x_{h-1}=\cdots=x_{h-l}$.
Moreover, $d_{G'}(v_{q-l})=d_{G'}(v_{q-l+1})=\cdots=d_{G'}(v_{q})$ implies that
$y_{q-l}=y_{q-l+1}=\cdots=y_{q}$.
$d_{G'}(v_{p})=d_{G'}(v_{p-1})=\cdots=d_{G'}(v_{h-l})$ implies that $y_{p}=y_{p-1}=\cdots=y_{h-l}$. By \eqref{XXc} and similar to Lemma \ref{trij}, we have
\begin{equation}\label{eqg1l}
 \begin{array}{lll}
& &x^T(\rho_2-\rho_1)y\\
&=&x^T(A_{\alpha}(G')-A_{\alpha}(G))y\\
&=&\sum\limits_{j=0}^l[\alpha(x_{q-j}y_{q-j}+x_py_p-x_ky_k-x_{h-j}y_{h-j})\\
&&+(1-\alpha)(x_{q-j}y_p+x_py_{q-j}-x_ky_{h-j}-x_{h-j}y_k)]\\
&=&(l+1)[\alpha(x_py_p+x_qy_q-x_hy_h-x_ky_k)+(1-\alpha)(x_py_q+x_qy_p-x_hy_k-x_ky_h)]\\
&=&(l+1)[(x_q-x_k)(\alpha y_k+(1-\alpha)y_h)+(y_q-y_k)(\alpha x_q+(1-\alpha)x_p)\\
&&+(y_p-y_h)(\alpha x_p+(1-\alpha)x_q)+(x_p-x_h)(\alpha y_h+(1-\alpha)y_k)] \\
&\ge& (l+1)[(y_q-y_k)(\alpha x_q+(1-\alpha)x_p)+
(x_p-x_h)(\alpha y_h+(1-\alpha)y_k)]\\
&\ge& (l+1)[\frac{p-h+l+1}{\rho_2-\alpha p+1}(\alpha y_k+(1-\alpha)y_h)(\alpha x_q+(1-\alpha)x_p)\\
&&-\frac{k-q+l+1}{\rho_1-\alpha k}(\alpha x_p+(1-\alpha)x_q)(\alpha y_h+(1-\alpha)y_k)]\\
&\ge&(l+1)(\alpha y_k+(1-\alpha)y_h)(\alpha x_q+(1-\alpha)x_p)(\frac{p-h+l+1}{\rho_2-\alpha p+1}-\frac{k+l-q+1}{\rho_1-\alpha k})\\
&=&(l+1)(\alpha y_k+(1-\alpha)y_h)(\alpha x_q+(1-\alpha)x_p)\frac{(p-h+l+1)(\rho_1-\alpha k)-(k+l-q+1)(\rho_2-\alpha p+1)}{(\rho_2-\alpha p+1)(\rho_1-\alpha k)}\\
&=&(l+1)(\alpha y_k+(1-\alpha)y_h)(\alpha x_q+(1-\alpha)x_p)\\
&&\times \frac{(k+l-q+1)(\rho_1-\rho_2)+(k+l-q+1)(\alpha p-\alpha k -1)+(p+q-h-k)(\rho_1-\alpha k)}{(\rho_2-\alpha p+1)(\rho_1-\alpha k)}.\\
\end{array}
\end{equation}
Multiply both sides by $\frac{(\rho_2-\alpha p+1)(\rho_1-\alpha k)}{(l+1)(\alpha y_k+(1-\alpha)y_h)(\alpha x_q+(1-\alpha)x_p)}$, \eqref{eqg1l} can be rearranged to the following form:%A little calculation yields
\begin{equation}\label{egeg4}
 \begin{array}{lll}
& &(\rho_2-\rho_1)(\frac{x^Ty(\rho_2-\alpha p+1)(\rho_1-\alpha k)}{(l+1)(\alpha y_k+(1-\alpha)y_h)(\alpha x_q+(1-\alpha)x_p)}+k+l-q+1)\\
&\ge&(k+l-q+1)(\alpha p-\alpha k -1)+(p+q-h-k)(\rho_1-\alpha k)\\
&\ge & 0.
\end{array}
\end{equation}

Further, $\rho_2=\rho_1$ holds if and only if $x_q=x_k$, $p+q-h-k=0$, $\alpha(p-k)-1=0$ and $\alpha=\frac{1}{2}$, therefore, $l=0$, $p=k+2=h+1=q+3$ and $\alpha=\frac{1}{2}$.
%Using the similar method in Lemma \ref{trg}, we can obtain the results, and we omit the procedure here.
%%增加与引理3.4类似的讨论
\end{proof}

\section{Proof of Theorems \ref{t2n-2-1} and \ref{tknc}}

In this section, we first prove the following theorem, which extends the main results of  Li et.al \cite{lity} for $n-1\le m\le 2n-3$ and  Chang and Tam \cite{ct2} for $n-1\le m\le 2n-3$ and $\alpha=1/2$.

\begin{theo}\label{t2n-2c}
Let $n-1 \le m \le 2n-2$.\\
{\em (i).} If $\alpha \in (\frac{1}{2},1)$  or $m \neq n+2$ and $\alpha=\frac{1}{2}$, then $S_{n,m}$ is the only extremal graph that maximizes the $A_{\alpha}$-spectral radius in $\mathcal{H}_{n,m}$.\\
{\em (ii).} If $m = n+2$ and $\alpha=\frac{1}{2}$, then $S_{n,n+2}$ and $\tilde{S}_{n,n+2}$ are all extremal graphs that maximize the $A_{\frac{1}{2}}$-spectral radius in $\mathcal{H}_{n,n+2}$.
\end{theo}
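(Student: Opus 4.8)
The plan is to reduce the problem to threshold graphs and then identify $S_{n,m}$ as the terminal object of the spectral-radius-nondecreasing transformations built in Section 3. First I would invoke Lemma~\ref{threshold}: any $G$ maximizing $\rho_\alpha$ over $\mathcal{H}_{n,m}$ is a connected threshold graph, so by Lemma~\ref{lemma-step} I may take $A(G)$ stepwise with non-increasing degrees $d_1\ge\cdots\ge d_n$. The task then becomes purely a comparison among threshold graphs: show that among connected threshold graphs with $n$ vertices and $m$ edges, $S_{n,m}$ uniquely maximizes $\rho_\alpha$, with the single extra tie $\tilde{S}_{n,n+2}$ when $\alpha=\tfrac12$ and $m=n+2$.

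Second, I would show that every connected threshold graph $G\ne S_{n,m}$ in $\mathcal{H}_{n,m}$ can be carried to $S_{n,m}$ by a finite sequence of the transformations of Definitions~\ref{def0}, \ref{def}, \ref{def2}, each of which does not decrease $\rho_\alpha$ by Lemmas~\ref{trij}, \ref{tr24}, \ref{trg}, \ref{trg1} (the move $L_{7,12}\to S_{7,12}$ of Figure~\ref{tt3} is a typical instance). Reading the stepwise matrix of $G$, whenever $G\ne S_{n,m}$ I would locate an edge $v_hv_k$ lying lower or farther right than the staircase of $S_{n,m}$ permits, together with a corresponding non-edge $v_pv_q$, and apply the transformation relocating that cell. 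The key structural point is that each such move strictly raises the non-increasing degree sequence in lexicographic order, while $S_{n,m}$ realizes the lexicographically largest degree sequence attainable by a threshold graph in $\mathcal{H}_{n,m}$ (its unique dominating vertex forces $d_1=n-1$, and the star centre then forces the maximal $d_2=a+1$; for $m\in\{2n-3,2n-2\}$ one instead gets $d_2=n-1$). Since a threshold graph is determined by its degree sequence (Lemma~\ref{54}) and only finitely many occur, the procedure terminates at $S_{n,m}$, whence $\rho_\alpha(G)\le\rho_\alpha(S_{n,m})$. The genuinely new input beyond Theorem~\ref{t2n-3c} is the top value $m=2n-2$.

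Third, for uniqueness I would run the chain backwards. If $\alpha\in(\tfrac12,1)$ the equality clauses of the transformation lemmas cannot hold, so every step is strict and $S_{n,m}$ is the unique maximizer; the delicate analysis concerns $\alpha=\tfrac12$. There, if $G\ne S_{n,m}$ is also a maximizer then $\rho_\alpha$ is constant along $G\to\cdots\to S_{n,m}$, so each step realizes equality, which by the equality clauses forces $l=0$ and the step to be the elementary transformation $(q+3,q;q+2,q+1)$ swapping $v_{q+1}v_{q+2}$ for $v_qv_{q+3}$ on four consecutive vertices, with the added requirement $x_q=x_k$, i.e. $d_q=d_{q+1}$ in the lower graph (Corollary~\ref{y1y2}). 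Thus every maximizer is obtained from $S_{n,m}$ by iterating the reverse elementary step. Enumerating the admissible reverse steps, writing $S_{n,m}=K_1\vee\bigl(K_{1,a}\cup(n-a-2)K_1\bigr)$ when $k=1$, the constraints ($v_qv_{q+3}$ an edge, $v_{q+1}v_{q+2}$ a non-edge, and $d_q=d_{q+1}$) are simultaneously satisfiable only for $a=3$, where the reverse step yields $\tilde{S}_{n,n+2}=K_1\vee\bigl(K_3\cup(n-4)K_1\bigr)$, and that graph admits no further admissible reverse step; for $a\ne 3$ the star-centre degree exceeds $3$, making $x_q>x_k$ and every candidate step strict, while for $m=2n-2$ (so $k=2$) no admissible reverse step exists at all. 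A direct application of Lemma~\ref{trij} with $\alpha=\tfrac12$ and $(p,q;h,k)=(5,2;4,3)$ then confirms $\rho_{1/2}(\tilde{S}_{n,n+2})=\rho_{1/2}(S_{n,n+2})$, completing part (ii).

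The main obstacle will be the reachability statement of the second step: proving that from every non-extremal threshold graph an admissible transformation is genuinely available and that the lexicographic potential certifies termination without getting stuck. Closely tied to it is the equality bookkeeping of the third step, where one must verify that the only surviving equalities are the single elementary swap on four consecutive vertices and that the degree condition $d_q=d_{q+1}$ isolates precisely $\tilde{S}_{n,n+2}$, ruling out spurious ties for every other $m$ in $[n-1,2n-2]$, in particular at the new endpoint $m=2n-2$.
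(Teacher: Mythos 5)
Your overall strategy (reduce to threshold graphs, then move toward $S_{n,m}$ by radius-nondecreasing transformations and analyze equality) is in the spirit of the paper, but your proof has a genuine gap at exactly the step you flag as "the main obstacle": the existence of an admissible transformation at every non-extremal threshold graph. This is not a routine verification, because the monotonicity lemmas only cover moves with $k=q+1$ (Lemmas~\ref{trij}, \ref{trg}, \ref{trg1}) or with $k=q+2$ \emph{and} $p>h+1$ (Lemma~\ref{tr24}), and a threshold graph $G\ne S_{n,m}$ need not admit any single-cell relocation of this restricted kind. Concretely, take $n=7$, $m=12=2n-2$ and $G=K_1\vee (K_4\cup 2K_1)$, with stepwise degree sequence $(6,4,4,4,4,1,1)$. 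Its stepwise matrix has exactly one eligible edge corner, $(h,k)=(5,4)$, and exactly one eligible non-edge corner, $(p,q)=(6,2)$; the unique single-cell move $(6,2;5,4)$ has $k=q+2$ and $p=h+1$, which is precisely the case excluded by Lemma~\ref{tr24}, so none of the lemmas certifies that this move does not decrease $\rho_\alpha$. One is forced to use the block move $(7,2;5,3;2,1)$ of Definition~\ref{def}, which relocates \emph{two} cells at once, and the availability of such a block move in general hinges on a quantitative fact: there must be at least $\theta$ vertices of degree one to receive the $\theta$ edges stripped from the last vertex of degree $\ge 3$. That is exactly the counting the paper extracts from the hypothesis $m\le 2n-2$ (namely $e(G')\ge 2n-3-\delta_1+\theta(\theta+1)/2$, hence $\delta_1\ge\theta$), and it appears nowhere in your argument; without it the reachability claim, and with it your termination and backward-chain analysis, never gets off the ground.

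It is also worth noting that once this counting is available, your remaining machinery (lexicographic termination, running the chain backwards, enumerating reverse elementary swaps) becomes unnecessary. The paper applies a \emph{single} transformation $(n-\delta_1+\theta,\,2;\,n-\delta_1-\delta_2,\,3;\,\theta,1)$ to the assumed maximizer $G'$, uses maximality of $G'$ to convert the inequality of Lemma~\ref{trg} into an equality, and then reads off the equality conditions ($\alpha=\tfrac12$, $l=\theta-1=0$, $p=q+3$, $h=q+2$), which force $\theta=1$, $\delta_2=0$, $\delta_1=n-4$, hence $m=n+2$ and $G'=\tilde S_{n,n+2}$ directly. So if you want to salvage your route, the counting argument is the piece you must supply; having supplied it, the one-step argument of the paper supersedes the iteration entirely.
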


\begin{proof}
Note that  $\rho_\alpha(\widetilde{S}_{n,n+2})=\rho_{\alpha}(S_{n,n+2})$.
It is sufficient to prove that if  $G'\neq S_{n,m}$  is a graph which has the maximum $A_{\alpha}$-spectral radius in $\mathcal{H}_{n,m}$ then $m=n+2$, $\alpha=1/2$ and $G'=\tilde{S}_{n,m}$.

 If $n\le 5, $ it is easy to see that the assertion holds. So we assume that $n\ge 6$.   By Lemma \ref{threshold},  we assume that $G'\neq S_{n,m}$ is  a connected
threshold graph with degree sequence %$(d_{G'}(v_1), d_{G'}(v_2), \ldots, d_{G'}(v_n))$ and
$d_{G'}(v_1)\geq d_{G'}(v_2)\geq \ldots\geq d_{G'}(v_n)$.  Since there is only one  threshold graph $S_{n,m}$ in $\mathcal{H}_{n,m}$ for $n-1\le m\le n+1$, we have $m\ge n+2$. Further, since  $G'\neq S_{n,m}$ is a threshold graph,  it is easy to see that $d_{G'}(v_1)=n-1$, $n-2\ge d_{G'}(v_2)\ge d_{G'}(v_3)\ge 3$ and  $d(v_{n})=1$.  Let  $\delta_1=|\{i: d_{G'}(v_i)=1\}|$ and $\delta_2=|\{i: d_{G'}(v_i)=2, i>2 \}|$. Let $s$ be the largest positive number such that $d_{G'}(v_{s+2})\ge 3$ and $d_{G'}(v_{s+3})\le 2$. Denote by $\theta=d_{G'}(v_{s+2})-2>0$. Then the number $e(G')$ of $G'$ is at least $(n-1)+(n-\delta_1-2)+\theta+ (\theta-1)+\ldots+1=2n-3-\delta_1+\theta(\theta+1)/2.$ Hence   $2n-3-\delta_1+\theta(\theta+1)/2\le 2n-2$, which implies that $\delta_1 \ge \theta(\theta+1)/2-1$.  So $\delta_1\ge \theta$.  Let $G_1$ be obtained from $G'$ by a Transformation $(n-\delta_1+\theta,2;n-\delta_1-\delta_2,3;\theta,1)$ of Definition \ref{def}. By Lemma \ref{trg}, $\rho_{\alpha}(G_1) \ge \rho_{\alpha}(G')$. On the other hand, by the definition of $G'$ and $G_1\in  \mathcal{H}_{n,m}$,  $\rho_{\alpha}(G_1) \le \rho_{\alpha}(G')$. Hence  $\rho_{\alpha}(G_1) =\rho_{\alpha}(G')$. Hence by Lemma \ref{trg} again, we have $\alpha=\frac{1}{2}$, $\theta-1=0$, $n-\delta_1+\theta=2+3$, $n-\delta_1-\delta_2=4$, which implies  $\delta_2=0$ and  $\theta=1$, Hence $m=n+2$.   Furthermore, it is easy to see that $G'=\tilde{S}_{n,n+2}$.
So we finish our proof.
%if $\alpha \in (\frac{1}{2},1)$ or $\alpha=\frac{1}{2}$ and $m \neq n+2$, $\rho_{\alpha}(G_1) > \rho_{\alpha}(G')$, a contradiction to the choice of $G'$. Therefore, $G' = S_{n,m}$.
%If $m = n+2$ and $\alpha=\frac{1}{2}$, then $\rho_{\alpha}(G_1) = \rho_{\alpha}(G')$, i.e. $\rho_{\alpha}(\tilde{S}_{n, m})=\rho_{\alpha}(S_{n,m})$. Therefore, $S_{n,n+2}$ and $\tilde{S}_{n,n+2}$ are the extremal graphs that maximize the $A_{\frac{1}{2}}$-spectral radius in $\mathcal{H}_{n,n+2}$.
%Using the above result, If $\alpha \in (\frac{1}{2},1)$ or $\alpha=\frac{1}{2}$ and $m \neq n+2$, $S_{n,m}$ is the unique extremal graph maximizing $A_{\alpha}-$spectral radius, if $\alpha=\frac{1}{2}$ and $m = n+2$, there exist two extremal graph $\tilde{S}_{n, m}$ and $S_{n,m}$ that maximize $A_{\alpha-}$spectral radius.
%This completes the proof.
\end{proof}

%By Theorem \ref{t2n-2c}, we can easily prove that the graph $S_{n,2n-2}$ is the unique graph which maximizes the $Q$-spectral radius over $\mathcal{H}_{n,2n-2}$.\\

%by Theorem \ref{t2n-2c},  $G'\cong S_{n,2n-2}$.
%\begin{coro}\label{t2n-2}
%Let $n\ge 4$ and $m = 2n-2$. If  $G'$  is any graph which maximizes  $Q$-spectral radius of all graphs of order $n$ with size $m$, then $G'\cong K_5 \cup K_1$ for $n=6$; and $G'\cong S_{n,2n-2}$ for $n \neq 6$.
%\end{coro}
Now we are ready to present
{ \textbf{\mbox{Proof of Theorem \ref{t2n-2-1}}}}.
\begin{proof} It is easy to see that assertion holds for $n\le 16$ with the help of Python programming. So we assume that $n > 16$.
Let $G'$ be any graph which maximizes the spectral signless radius of all graphs of order $n$ with size $m=2n-2$.
  Then  $G'$ is a threshold graph. Suppose that  $G'$ is disconnected. Note that any threshold graph has at most one non-trivial component. So we assume that $G'= G^\ast \bigcup \bar{K}_{s}$, where $G^*$ is a connected threshold graph of order $n^*$ with size $m^*=2n-2$. Then by $n^*(n^*-1)/2\ge m^*=2n-2$, we have
   $\frac{1+\sqrt{16n-15}}{2}\le n^*\le n-1$   and  $q(G')=q(G^\ast)$.
%First we prove the following result holds: $q(G^\ast) \le n+1.5$.
By Lemma \ref{ue},
$$q(G^*) \le \frac{2(2n-2)}{n^*-1}+n^*-2.$$
Let $f(x)=\frac{4n-4}{x}+x-1$ for $x \in [ \frac{-1+\sqrt{16n-15}}{2},n-2]$.  By $f^{\prime\prime}(x)>0$,  we have
  $f(x)\le \max\{ f(\frac{-1+\sqrt{16n-15}}{2}),  f(n-2)\}.$ Hence by   $\frac{-1+\sqrt{16n-15}}{2}\le n^*-1\le n-2$, we have
   $$q(G^*) \le \max\{\sqrt{16n-15}-1, n+\frac{4}{n-2}\}< n+1.6\le q(S_{n,2n-2}),$$
which is a contradiction. Hence $G'$ must be a connected graph. Therefore, by Theorem \ref{t2n-2c},  $G'= S_{n,2n-2}$.
 %  occurs if and only if  either $x=n-2$ or $x=\frac{1+\sqrt{16n-15}}{2}$. A simple calculation yields that $f(n-2)=n+1+\frac{4}{n-2}$ and
%$f(\frac{1+\sqrt{16n-15}}{2})=\sqrt{16n-15}-1$.
%Since $n > 16$, then $f(1)<n+1.5$ and $f(n-2\sqrt{n-1})<n+1.5$. Hence $q(G^\ast) \le n+1.5$.
%With the help of Python programming, when $7 \le n \le 13$,  $\sigma(G^\ast) \le n+1.5$.
%On the other hand, by Lemma \ref{2n-2}, $q(S_{n,2n-2}) \ge n+1.6>q(G')$,  which  is a contradiction. Hence, $G'$ is connected. Moreover, by Theorem %\ref{t2n-2c},  $G'\cong S_{n,2n-2}$.
\end{proof}

%\begin{rem}
%By Corollary \ref{t2n-2}, $S_{n,2n-2}$ is the unique graph  which  maximizes the $Q$-spectral radius of all graphs of order $n$ and size $m=2n-2$ except for $n=6$,  which solves Problem \ref{que}.
%\end{rem}

In order to prove Theorem~\ref{tknc}, we firstly prove the following key theorem.
\begin{theo}\label{tknc-1}
Let $r\ge3$, $n$ and $m$  be three positive number with $n >\frac{30r-63+5\sqrt{32r^2-136r+137}}{2}$ and $(r-1)n-\frac{r(r-1)}{2}< m \le rn-\frac{r(r+1)}{2}$. \\

 (i). If $\alpha \in (\frac{1}{2}, 1)$ or $\alpha=\frac{1}{2}$ and $m \neq (r-1)n-\frac{r(r-1)}{2}+3$, then $S_{n, m}$ is the unique  extremal graph that maximizes the $A_{\alpha}$-spectral radius in $\mathcal{H}_{n,m}$. \\

  (ii). If $\alpha=\frac{1}{2}$ and $m = (r-1)n-\frac{r(r-1)}{2}+3$, then $S_{n, m}$ and $\tilde{S}_{n, m}$ are the only two extremal graphs that maximize the $A_{\frac{1}{2}}$-spectral radius in $\mathcal{H}_{n,m}$.
\end{theo}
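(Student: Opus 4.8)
The plan is to generalise the argument of Theorem~\ref{t2n-2c}, replacing the single dominating vertex there by a clique of $r-1$ universal vertices. Since $\mathcal H_{n,m}$ consists of connected graphs, a maximiser $G'$ is automatically connected, and by Lemma~\ref{threshold} it is a connected threshold graph; by Lemma~\ref{54} it is determined by its non-increasing degree sequence $d_1\ge\cdots\ge d_n$, and I may take its adjacency matrix to be stepwise, so $d_1=n-1$. If $G'$ had $r$ vertices of degree $n-1$ then $m\ge rn-\frac{r(r+1)}2$, forcing $m=rn-\frac{r(r+1)}2$ and $G'=K_r\vee\bar K_{n-r}=S_{n,m}$; hence I may assume $G'$ has at most $r-1$ universal vertices and $G'\neq S_{n,m}$, and the goal is to reach a contradiction unless $\alpha=\tfrac12$, $a=3$ and $G'=\tilde S_{n,m}$, where $a:=m-\big((r-1)n-\frac{r(r-1)}2\big)\in\{1,\dots,n-r\}$. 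For reference, $S_{n,m}=K_{r-1}\vee\big(K_{1,a}\cup(n-r-a)K_1\big)$ has degree sequence $(n-1)^{\,r-1},\,a+r-1,\,r^{\,a},\,(r-1)^{\,n-r-a}$, while $\tilde S_{n,m}$ (defined when $a=3$) has $(n-1)^{\,r-1},\,(r+1)^{3},\,(r-1)^{\,n-r-2}$.

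The core of the argument is a single application of one of the generalised transformations of Definition~\ref{def} or Definition~\ref{def2}. Reading the stepwise matrix of $G'$ against that of $S_{n,m}$, I would record the number $\delta_1$ of vertices of degree $r-1$, the number $\delta_2$ of ``extra'' vertices of degree $r$, and a transition height $\theta$ exactly as in the proof of Theorem~\ref{t2n-2c}, but now anchored at column $q=r$ with $k=q+1=r+1$, so that the $r-1$ universal vertices stay untouched. A size count against $m\le rn-\frac{r(r+1)}2$ then yields a lower bound of the form $\delta_1\ge\theta(\theta+1)/2-1\ge\theta$, guaranteeing that there are enough degree-$(r-1)$ vertices to receive the moved edges and that the index inequalities $q<k\le k+l<h<p-l$ of Definition~\ref{def} remain valid. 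Applying the resulting Transformation $(p,r;h,r+1;\theta,1)$ produces a connected threshold graph $G_1\in\mathcal H_{n,m}$ with $\rho_\alpha(G_1)\ge\rho_\alpha(G')$ by Lemma~\ref{trg} (with Lemma~\ref{tr24} disposing of the sub-cases in which the deviation forces $k=q+2$).

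Since $G'$ is a maximiser, $\rho_\alpha(G_1)=\rho_\alpha(G')$, so the equality clause of Lemma~\ref{trg} applies and forces $\alpha=\tfrac12$, $l=0$, and $p=h+1=q+3$. Feeding these local constraints back through the degree-sequence bookkeeping in the Remarks following Definition~\ref{def} collapses $\delta_2=0$ and $\theta=1$, whence $a=3$, $m=(r-1)n-\frac{r(r-1)}2+3$, and $G'=\tilde S_{n,m}$; in every remaining case the inequality $\rho_\alpha(G_1)>\rho_\alpha(G')$ is strict, contradicting maximality, so $S_{n,m}$ is the unique maximiser. To finish part (ii) I would separately verify $\rho_{1/2}(\tilde S_{n,m})=\rho_{1/2}(S_{n,m})$, which is immediate from running the same transformation at $\alpha=\tfrac12$ in its equality case, so that both graphs are genuinely extremal.

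I expect two places to be delicate. The first is purely combinatorial: establishing that for \emph{every} connected threshold graph $G'\neq S_{n,m}$ in this band there is an admissible transformation of the stated shape landing back in $\mathcal H_{n,m}$, which needs a careful case analysis of where the staircase of $G'$ first departs from that of $S_{n,m}$, together with the elimination of the ``fewer than $r-1$ universal vertices'' possibility. The second, and the genuinely quantitative obstacle, is pinning down the exact threshold on $n$: one must compare a lower bound for $\rho_\alpha(S_{n,m})$, obtained from an equitable quotient matrix as in Lemma~\ref{2n-2} and Lemma~\ref{quot}, against an upper bound for the $A_\alpha$-radius of the most competitive (``too balanced'') alternative threshold graph, and solving the resulting polynomial inequality in $n$ is what produces the stated root $\tfrac{30r-63+5\sqrt{32r^2-136r+137}}2$.
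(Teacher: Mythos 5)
Your frame (maximizer is a connected threshold graph; assume $G'\neq S_{n,m}$; use the equality clauses of Lemmas~\ref{trg} and \ref{trg1} to force $\alpha=\tfrac12$ and $G'=\tilde S_{n,m}$) agrees with the paper, but the engine you propose --- a \emph{single} transformation anchored at $(q,k)=(r,r+1)$, justified by the count $\delta_1\ge\theta(\theta+1)/2-1\ge\theta$ --- is a verbatim lift of the proof of Theorem~\ref{t2n-2c}, and it breaks down for $r\ge 3$. That argument works when $m\le 2n-2$ because the tiny edge budget forces any deviating threshold graph to differ from $S_{n,m}$ in one localized block, so the three parameters $(\delta_1,\delta_2,\theta)$ describe the whole graph and the equality case of one transformation pins everything down. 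In the band $(r-1)n-\frac{r(r-1)}{2}<m\le rn-\frac{r(r+1)}{2}$ a maximizer $G'\neq S_{n,m}$ can have fewer than $r-1$ universal vertices and can deviate in many widely separated degree blocks (many indices $j$ with $\delta_j=|\{i>j:\ d_{G'}(v_i)=j\}|>0$); a transformation anchored at $(r,r+1)$ need not even be admissible there (conditions (ii)--(iii) of Definitions~\ref{def} and \ref{def2} can fail), your size count is no longer valid without first proving facts like $d_{G'}(v_{r+1})\ge r+1$ (the paper's Lemma~\ref{lemma11}, which already requires a two-case transformation argument), and when a transformation is admissible its equality case constrains only the two blocks it touches, leaving all other blocks free. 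This is exactly the ``delicate point'' you defer, and it is not a loose end but the bulk of the proof: the paper needs Lemmas~\ref{lemma12}, \ref{lemma13}, \ref{lemma14} (no three consecutive nonzero $\delta_j$'s; blocks flanking a gap of length $l\ge2$ have size $O(l)$), Lemma~\ref{lemma17} (an edge bound on the subgraph induced by $\{v_{r+1},\dots,v_n\}$), the counting Lemma~\ref{lemma18} (some gap of length exactly one must occur), and only then Lemmas~\ref{lemma15} and \ref{lemma19} to get $\alpha=\tfrac12$ and collapse $G'$ to $\tilde S_{n,m}$ via one final transformation.

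Your second delicate point also misidentifies where the hypothesis $n>\frac{30r-63+5\sqrt{32r^2-136r+137}}{2}$ enters. It is not obtained by comparing an equitable-quotient lower bound for $\rho_\alpha(S_{n,m})$ against an upper bound for a single ``most competitive'' rival; that device (Lemmas~\ref{quot}, \ref{ue}, \ref{2n-2}) is used only in Theorem~\ref{t2n-2-1} to exclude disconnected maximizers in $\mathcal G_{n,2n-2}$, and for Theorem~\ref{tknc-1} no two-graph spectral comparison is available because the rivals form a large family. Instead the bound on $n$ is used purely combinatorially inside Lemma~\ref{lemma18}: assuming no gap of length one, the block bounds give $\sum_{i=1}^{\kappa-2}\delta_i\le 2\kappa-7$, squeezing the induced edge count between $\frac{(\kappa-r)(\kappa-r+1)}{2}$ and $(2r-3)(\kappa-r)+2r^2-10r+11$ yields $\kappa-r\le\frac{4r-7+\sqrt{32r^2-136r+137}}{2}$, and hence $n\le 5\kappa-14\le\frac{30r-63+5\sqrt{32r^2-136r+137}}{2}$, contradicting the hypothesis. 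Without this counting mechanism your proposal has no way to exclude threshold graphs whose deviation is spread over several separated blocks, so as written the proof has a genuine gap.
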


{\it{Proof}}.
Note that $\rho_{\alpha}(S_{n, m})=\rho_{\alpha}(\tilde{S}_{n, m})$ for $m = (r-1)n-\frac{r(r-1)}{2}+3$. Hence it is sufficient to prove that if
$G'\neq S_{n.m}$  is  any graph which has the maximal $A_\alpha$-spectral radius of  $ \mathcal{H}_{n,m}$, then $\alpha=\frac{1}{2}$, $m = (r-1)n-\frac{r(r-1)}{2}+3$ and $G'=\tilde{S}_{n, m}$.
%\end{proof}

%In order to prove Theorem~\ref{tknc} for $r\ge 3$, we  need to prove some properties of  extremal graphs  which attain the maximal $A_\alpha$-spectral radius of  $ \mathcal{H}$ which is the set of  all connected graphs of order $n$ with size $m$.

Let $G'\neq S_{n,m}$  be any graph which has the maximal $A_\alpha$-spectral radius of  $ \mathcal{H}_{n,m}$. By Lemma~\ref{threshold}, we assume that $G'$  is a connected threshold graph with the $n\times n$ stepwise adjacency matrix $A(G')=(a_{ij})_{n\times n}$ and the degree sequence
 %$(d_{G'}(v_1), d_{G'}(v_2), \ldots, d_{G'}(v_n))$, where
 $d_{G'}(v_1)\geq d_{G'}(v_2)\geq \ldots\geq d_{G'}(v_n)$.  Moreover,  denote by
 $$\kappa=\kappa(G')=\max\{j:a_{j+1,j}=1, 1\le j\le n-1 \}$$ and
 $$\delta_j=\delta_j(G')=|\{i: d_{G'}(v_i)=j, n\ge i>j\}|$$
  for $j=1,\cdots,\kappa$. Then we will prove a series of lemmas.
% \end{proof}

\begin{lemma}\label{lemma11}
%Let $G'\neq S_{n,m}$  be a  graph having the maximal $A_\alpha$-spectral radius of  $ \mathcal{H}_{n,m}$.  If $(r-1)n-\frac{r(r-1)}{2}<m\le rn-\frac{r(r+1)}{2}$, then
  $d_{G'}(v_n)\le \kappa(G')-2$, $d_{G'}(v_{r+1})\ge  r+1$ and $\kappa(G')\ge r+1$.
\end{lemma}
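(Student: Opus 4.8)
The plan is to combine the stepwise form of $A(G')$ with an edge count governed by $\kappa(G')$ and with the spectral-radius-increasing transformations of Section 3. First I would record what $\kappa(G')=\kappa$ means structurally: since $a_{\kappa+1,\kappa}=1$, the stepwise property forces $\{v_1,\dots,v_{\kappa+1}\}$ to induce a clique $K_{\kappa+1}$, while $a_{j+1,j}=0$ for every $j>\kappa$ forces each $v_i$ with $i\ge\kappa+2$ to be adjacent only to a prefix of $\{v_1,\dots,v_\kappa\}$; in particular $d_{G'}(v_{\kappa+1})=\kappa$ and $d_{G'}(v_i)\le\kappa$ for $i\ge\kappa+2$. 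Summing degrees gives $m\le\kappa n-\frac{\kappa(\kappa+1)}{2}$, with equality only for $K_\kappa\vee\bar{K}_{n-\kappa}=S_{n,\kappa n-\kappa(\kappa+1)/2}$.

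To prove $\kappa(G')\ge r+1$ I would argue by contradiction, assuming $\kappa\le r$. Then $a_{j+1,j}=0$ for all $j\ge r+1$, so by the stepwise property $\{v_{r+1},\dots,v_n\}$ is an independent set and every edge meets $\{v_1,\dots,v_r\}$; hence $G'\subseteq K_r\vee\bar{K}_{n-r}$. The claim is that inside this regime the quasi-star $S_{n,m}$ is the unique maximizer of $\rho_{\alpha}$ among $m$-edge threshold graphs: starting from $G'$ and repeatedly applying the Transformations $(p,q;h,k)$ with $k=q+1$ or $k=q+2$ (Lemmas \ref{trij} and \ref{tr24}) moves the graph toward $S_{n,m}$ without decreasing $\rho_{\alpha}$, and each step is \emph{strict} because the only equality configuration ($\alpha=\tfrac12$, $p=h+1=q+3$) is the triangle block of $\tilde{S}_{n,m}$, whose parameter equals $r+1$ and so cannot occur while the graph stays inside $K_r\vee\bar{K}_{n-r}$. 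Thus $\rho_{\alpha}(G')<\rho_{\alpha}(S_{n,m})$ unless $G'=S_{n,m}$, contradicting that $G'\ne S_{n,m}$ is a maximizer; so $\kappa(G')\ge r+1$. Then $v_{r+1}$ lies in the clique $\{v_1,\dots,v_{\kappa+1}\}$ of size at least $r+2$, which immediately yields $d_{G'}(v_{r+1})\ge\kappa\ge r+1$. This middle step is the main obstacle: making rigorous that some admissible transformation always exists for $G'\ne S_{n,m}$ and that its equality case is genuinely excluded inside the regime $\kappa\le r$ is where the real work lies.

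Finally, for $d_{G'}(v_n)\le\kappa-2$ I would use a clean counting estimate, now exploiting $\kappa\ge r+1$. Suppose instead $d_{G'}(v_n)\ge\kappa-1$; then every vertex has degree at least $\kappa-1$, so together with the clique $K_{\kappa+1}$ we get $m\ge\binom{\kappa+1}{2}+(\kappa-1)(n-\kappa-1)$. A direct rearrangement shows the right-hand side equals $\bigl(rn-\frac{r(r+1)}{2}\bigr)+\phi(\kappa)$, where $\phi(\kappa)=(\kappa-1-r)n-\frac{\kappa^2-\kappa}{2}+\frac{r(r+1)}{2}+1$. Since $\binom{\kappa+1}{2}\le m\le rn$ forces $\kappa<n$, the derivative $\phi'(\kappa)=n-\kappa+\tfrac12>0$, so $\phi$ is increasing and $\phi(\kappa)\ge\phi(r+1)=1$. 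Hence $m>rn-\frac{r(r+1)}{2}$, contradicting the hypothesis $m\le rn-\frac{r(r+1)}{2}$, and therefore $d_{G'}(v_n)\le\kappa-2$. This last part is purely combinatorial and needs only that $n$ is large enough to keep $\kappa<n$, which is guaranteed by the standing lower bound on $n$.
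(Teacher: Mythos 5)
Your structural setup (the clique on $\{v_1,\dots,v_{\kappa+1}\}$, the prefix adjacency for $i\ge\kappa+2$, and the bound $m\le\kappa n-\frac{\kappa(\kappa+1)}{2}$) is correct, and your final counting argument for $d_{G'}(v_n)\le\kappa-2$ is also correct \emph{given} $\kappa\ge r+1$; it is a genuinely different route from the paper, which proves $d_{G'}(v_n)\le\kappa-2$ first and purely structurally (if $d_{G'}(v_n)=\kappa$ or $\kappa-1$, the degree sequence forces $G'=K_{\kappa}\vee(n-\kappa)K_1$ or $G'=K_{\kappa-1}\vee(K_{1,\delta_\kappa}\cup(n-\kappa-\delta_\kappa)K_1)$, i.e.\ $G'=S_{n,m}$, a contradiction), independently of the other two claims. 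But the entire weight of the lemma rests on your middle step, $\kappa(G')\ge r+1$, and there your argument has a genuine gap --- one you acknowledge yourself.

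Two things fail there. First, you never show that for a maximizer $G'\ne S_{n,m}$ with $\kappa\le r$ an admissible transformation exists, nor that iterating such moves reaches $S_{n,m}$; that existence is exactly what needs proof, and in the paper it is manufactured by a counting step (from $m>(r-1)n-\frac{r(r-1)}{2}$ one gets $\delta_r>r-\varsigma-1$, which is what makes the chosen transformation legal). Second, and more seriously, your strictness claim is false: the equality case of Lemmas \ref{trij}, \ref{trg}, \ref{trg1} is $\alpha=\tfrac12$ together with the \emph{relative} index condition $p=h+1=q+3$ (and $l=0$); nothing forces the absolute position $q$ to be near $r$, so equality configurations can occur with $q\le r-1$, i.e.\ inside the regime $G'\subseteq K_r\vee\bar{K}_{n-r}$. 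Indeed, the paper's treatment of the hard case (in its notation $d_{G'}(v_{r+1})=r$, which is equivalent to $\kappa=r$) does \emph{not} derive the contradiction from strictness: in its Case 1 ($\varsigma<r-1$) equality $\rho_\alpha(G_1)=\rho_\alpha(G')$ does occur, and the contradiction comes from the structural consequences of the equality characterization of Lemma \ref{trg1} ($\alpha=\tfrac12$, $r=\varsigma+2$, $n-\sum_{j\le\varsigma}\delta_j=r+1$), which force $m\le(r-1)n-\frac{r(r-1)}{2}$, against the hypothesis on $m$; only in Case 2 ($\varsigma=r-1$) is the increase strict, because there $p-h=\delta_{r-1}+1\ge2$ rules the equality configuration out. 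Moreover the transformations needed are the general ones of Definition \ref{def2} (type $(p,q;h,k;1,l+1)$), not only the simple ones of Lemmas \ref{trij} and \ref{tr24}. To close your gap you would need to (i) extract an explicit admissible transformation from the edge-count hypothesis, and (ii) run the equality analysis to a numerical contradiction with the range of $m$, rather than appeal to strictness inside the regime.
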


\begin{proof}
%If $r=1$, then $m\le n-1$, there exists only one connected threshold graph $G'=S_{n,m}$, a contradiction to $G'\neq S_{n,m}$, so the result holds.
%If $r=2$, by Theorem \ref{t2n-2c}, we have $G'=\widetilde{S}_{n,m}$, then $d_{G'}(v_{3})=3$ and $\kappa(G')=3$, the result holds. In the following we
%only consider $r \ge 3$.

 By the definition of $\kappa(G')$,  we have $a_{\kappa+2, \kappa+1}=0$. By  $G'\neq S_{n,m}$ being a connected threshold graph,  we have $a_{n, \kappa+1}=0$ which implies that  $d_{G'}(v_n)\le \kappa$. Further, if $d_{G'}(v_n)= \kappa$, then the degree sequence of $G'$ is $(n-1, \ldots, n-1, \kappa, \ldots, \kappa)$  which implies $G'=K_{\kappa}\bigvee (n-\kappa)K_1=S_{n,m}$, a contradiction. If $d_{G'}(v_n)= \kappa-1$,  then
      the degree sequence of $G'$ is $(n-1, \ldots, n-1, \kappa, \ldots, \kappa, \kappa-1, \ldots, \kappa-1))$ which implies that
$G'=K_{\kappa-1}\bigvee (K_{1,\delta_\kappa}\bigcup(n-\kappa-\delta_\kappa)K_1)=S_{n,m}$, a contradiction. Hence  $d_{G'}(v_n)\le \kappa(G')-2$.

Suppose that  $d_{G'}(v_{r+1})\le r-1$.  Since $A(G')=(a_{ij})$ is a stepwise adjacency matrix of $G'$ with the degree sequence  $d_{G'}(v_1)\geq d_{G'}(v_2)\geq \ldots\geq d_{G'}(v_n)$,  we have $a_{r+1,r}=0$ and  $a_{r,r+1}=a_{r+1,r}=0$. So  $d_{G'}(v_r)\le r-1$,  which implies  $\sum_{j=r}^n d_{G'}(v_j)\le(r-1)(n-r+1)$. Hence,
\begin{equation*}
 \begin{array}{lll}
2m&=&\sum_{j=1}^n d_{G'}(v_j)\\
&=&\sum_{j=1}^{r-1} d_{G'}(v_j)+\sum_{j=r}^{n} d_{G'}(v_j)\\
&\le&(n-1)(r-1)+(r-1)(n-r+1)\\
&=&2(r-1)n-(r-1)r.
\end{array}
\end{equation*}
So $m\le (r-1)n-\frac{r(r-1)}{2}$, which contradicts to the assumption of $m$.
Hence $d_{G'}(v_{r+1})\ge r$.

Furthermore, suppose that $d_{G'}(v_{r+1})= r$.
  By $r+1 \in \{i: d_{G'}(v_i)=r, i>r\} $,  we have $\delta_{r}>0$ and $d_{G'}(v_{n})\le \cdots \le d_{G'}(v_{r+2})\le r$.  In addition, by
  $m\le rn-\frac{r(r+1)}{2}$, we have $d_{G'}(v_{n})<r$.  Let $\varsigma=\max\{j:\delta_j \neq 0,j<r \}$. We consider the following two cases:

{\bf Case 1: $\varsigma<r-1$. }

%If $\varsigma<r-1$, by $(r-1)n-\frac{r(r-1)}{2}< m $, we have
%$r-\varsigma-1 \le \delta_{r}$,
Since $A(G')$ is the stepwise adjacency  symmetric matrix, we  have $a_{ij}=0, \mbox{ for all } i>j>r$. Hence
\begin{equation*}
 \begin{array}{lll}
 (r-1)n-\frac{r(r-1)}{2} &<& m\\
&=&\sum_{j=1}^n \sum_{i>j} a_{ij}\\
&=&\sum_{j=1}^r \sum_{i>j} a_{ij}+\sum_{j=r+1}^n \sum_{i>j} a_{ij}\\
&=&\sum_{j=1}^r \sum_{i>j} a_{ij}\\
&=&\sum_{j=1}^r \sum_{i=1}^n a_{ij}-\sum_{j=1}^r \sum_{i\le j} a_{ij}\\
&=&\sum_{j=1}^{\varsigma} \sum_{i=1}^n a_{ij}+\sum_{j=\varsigma+1}^{r-1} \sum_{i=1}^n a_{ij}+ \sum_{i=1}^n a_{ir}-\frac{r(r-1)}{2}\\
&\le&\varsigma(n-1)+(r-\varsigma-1)(n-2)+\delta_r+r-1-\frac{r(r-1)}{2}\\
&=&(r-1)n-\frac{r(r-1)}{2}+\delta_r-(r-1)+\varsigma.
\end{array}
\end{equation*}
So $r-\varsigma-1 <\delta_{r}$.
Let $G_1$ be obtained from $G'$ by a Transformation $(n-\sum_{j=1}^{\varsigma}\delta_j+1,r-1;n-\sum_{j=1}^{\varsigma}\delta_j,r;1,r-\varsigma-1)$ of Definition \ref{def2}. By Lemma \ref{trg1}, $\rho_{\alpha}(G_1) \ge \rho_{\alpha}(G')$.  Hence  $\rho_{\alpha}(G_1) =\rho_{\alpha}(G')$.
           By Lemma \ref{trg1} again, we have  $n-\sum_{j=1}^{\varsigma}\delta_j=r+1$, $\alpha=1/2$, $r=\varsigma+2$. Hence $m\le(r-1)n-\frac{r(r-1)}{2}$ which contradicts to the assumption
$m>(r-1)n-\frac{r(r-1)}{2}$.  It is impossible.
%Therefore, $\rho_{\alpha}(G_1) > \rho_{\alpha}(G')$, a contradiction to the choice of $G'$. \\

{\bf Case 2: $\varsigma=r-1$. }

 Since $G'\neq S_{n,m}$, it is easy to see that  $d_{G'}(v_{n})<r-1$ (otherwise, $G'=K_{r-1}\vee (K_{1,\delta_{r}} \cup (n-r-\delta_{r})K_1)$, so $G'=S_{n,m}$).   Let $\varsigma_1=\max\{j:\delta_j \neq 0,j<r-1\}$. Clearly,
 \begin{equation*}
 \begin{array}{lll}
 (r-1)n-\frac{r(r-1)}{2} &<& m\\
&=&\sum_{j=1}^n \sum_{i>j} a_{ij}\\
&=&\sum_{j=1}^r \sum_{i=1}^n a_{ij}-\sum_{j=1}^r \sum_{i\le j} a_{ij}\\
&=&\sum_{j=1}^{\varsigma_1} \sum_{i=1}^n a_{ij}+\sum_{j=\varsigma_1+1}^{r-1} \sum_{i=1}^n a_{ij}+ \sum_{i=1}^n a_{ir}-\frac{r(r-1)}{2}\\
&\le&\varsigma_1(n-1)+(r-\varsigma_1-1)(n-2)+\delta_r+r-1-\frac{r(r-1)}{2}\\
&=&(r-1)n-\frac{r(r-1)}{2}+\delta_r-(r-1)+\varsigma_1.
\end{array}
\end{equation*}
Then   $r-\varsigma_1-1< \delta_r$. Let $G_1$ be obtained from $G'$ by a Transformation $(n-\sum_{j=1}^{\varsigma_1}\delta_j+1,r-1;n-\sum_{j=1}^{r-1}\delta_j,r;1,r-\varsigma_1-1)$ of Definition \ref{def2}. By $n-\sum_{j=1}^{\varsigma_1}\delta_j+1-n+\sum_{j=1}^{r-1}\delta_j=\delta_{r-1}+1\ge 2$ and  Lemma \ref{trg1}, $\rho_{\alpha}(G_1) > \rho_{\alpha}(G')$,  which is a contradiction to the choice of $G'$. It is impossible.
Hence $d_{G'}(v_{r+1})\ge r+1$. Then we have $a_{r+2,r+1}=a_{r+1,r+2}=1$, so $\kappa \ge r+1$.
This completes the proof.
\end{proof}

\begin{lemma}\label{lemma12}
%Let $G'\neq S_{n,m}$  be a  graph having the maximal $A_\alpha$-spectral radius of  $ \mathcal{H}_{n,m}$.
%Denote by  $\kappa=\kappa(G')=\max\{j:a_{j+1,j}=1 \}\ge 3$ and $\delta_j=\delta_j(G')=|\{i: d_{G'}(v_i)=j, i>j\}|$ for $j=1,\cdots,\kappa$.
 %Then
  $\delta_{j}\times\delta_{j+1}\times\delta_{j+2}=0$, for $j=1, 2, , \ldots, \kappa-2$.
\end{lemma}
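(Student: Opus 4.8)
The plan is to argue by contradiction using the single-edge switch of Lemma~\ref{trij}. Suppose that for some $j$ with $1\le j\le \kappa-2$ all three of $\delta_j,\delta_{j+1},\delta_{j+2}$ are positive, so that $G'$ has non-dominating vertices of degrees $j$, $j+1$ and $j+2$. Since the degree sequence is non-increasing, the vertices of degree $j+2$ occur (by position) before those of degree $j+1$, which in turn occur before those of degree $j$. I would let $h$ be the position of the \emph{last} vertex of degree $j+2$ and $p$ be the position of the \emph{first} vertex of degree $j$, and set $q=j+1$ and $k=j+2$, so that $k=q+1$.

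First I would check that $(p,q;h,k)$ is a legitimate Transformation in the sense of Definition~\ref{def0}. Condition (i) holds because $q=j+1\ge 2$, $q<k=j+2$, and, the chosen vertices being non-dominating, $k=j+2<h<p$ (here $k=j+2\le\kappa$ uses $j\le\kappa-2$). For (ii): $v_p$ has degree $j$, so in the stepwise matrix it is adjacent exactly to $v_1,\dots,v_j$, giving $a_{pq}=a_{p,j+1}=0$ and $a_{pj'}=1$ for $j'<q$; moreover every vertex strictly between positions $q$ and $p$ has degree $\ge j+1$ and is therefore adjacent to $v_{j+1}$, so $a_{iq}=1$ for $q<i<p$. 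For (iii): $v_h$ has degree $j+2$, so $a_{hk}=a_{h,j+2}=1$ and $a_{hj'}=0$ for $j'>k$, while every vertex after $v_h$ has degree $\le j+1$ and hence is not adjacent to $v_{j+2}$, giving $a_{ik}=0$ for $i>h$. Thus $G_1=G'-v_hv_k+v_pv_q$ is a valid Transformation with $k=q+1$ and $G_1\in\mathcal{H}_{n,m}$. Now Lemma~\ref{trij} (applicable since $\alpha\in[1/2,1)$ and $k=q+1$) gives $\rho_\alpha(G_1)\ge\rho_\alpha(G')$; as $G'$ is extremal this is forced to be an equality, which by Lemma~\ref{trij} requires $\alpha=\tfrac12$ and $p=h+1=q+3$. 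The crucial point is that the hypothesis $\delta_{j+1}>0$ places at least one vertex of degree $j+1$ strictly between $v_h$ and $v_p$ in position, whence $p\ge h+2$, contradicting $p=h+1$. Therefore $\rho_\alpha(G_1)>\rho_\alpha(G')$, contradicting maximality, and so $\delta_j\delta_{j+1}\delta_{j+2}=0$.

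The entire spectral inequality is outsourced to Lemma~\ref{trij}, and the contradiction is essentially free: the switch balances the outer degrees $j$ and $j+2$ into two vertices of degree $j+1$, and it is exactly the positivity of the \emph{middle} value $\delta_{j+1}$ that opens the positional gap $p\ge h+2$ and upgrades the weak inequality of Lemma~\ref{trij} to a strict one. Consequently I expect the only delicate part to be the combinatorial bookkeeping in verifying conditions (ii) and (iii) of Definition~\ref{def0} — in particular confirming that the ``last vertex of degree $j+2$'' and ``first vertex of degree $j$'' are precisely the endpoints for which the stepwise adjacency pattern matches the transformation's requirements — rather than any analytic estimate.
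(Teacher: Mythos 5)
Your proof is correct and is essentially the paper's own argument: the paper performs exactly the same single-edge switch between the last vertex of degree $j+2$ (your $v_h$) and the first vertex of degree $j$ (your $v_p$), written there as a Transformation of Definition \ref{def} with $l=0$ (i.e., your Transformation $(p,q;h,k)$ of Definition \ref{def0}), and obtains strictness in the same way, from $p-h=\delta_{j+1}+1\ge 2$, which rules out the equality case $p=h+1$ of Lemma \ref{trg}/Lemma \ref{trij} and contradicts maximality. Incidentally, your choice $(q,k)=(j+1,j+2)$ is the careful one: the paper writes $(q,k)=(j,j+1)$, an off-by-one slip under which conditions (ii)--(iii) of the transformation would fail as literally stated (the first vertex of degree $j$ is already adjacent to $v_j$), so your bookkeeping in effect corrects a misprint while leaving the argument unchanged.
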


\begin{proof}
Suppose there exists an $1\leq h\leq \kappa-2$ such that $\delta_{h}\times\delta_{h+1}\times\delta_{h+2}\neq 0$. Let $G_1$ be the threshold graph
 obtained from $G'$ by a Transformation $(n-\sum_{j=1}^{h}\delta_j+1,h;n-\sum_{j=1}^{h+1}\delta_j,h+1;1,1)$ of Definition \ref{def}. Since $n-\sum_{j=1}^{h}\delta_j+1-(n-\sum_{j=1}^{h+1}\delta_j)=\delta_{h+1}+1\ge 2$,  we have  $\rho_{\alpha}(G_1) > \rho_{\alpha}(G')$ by Lemma \ref{trg}, which contradicts to the choice of $G'$. So the assertion holds.
\end{proof}

\begin{lemma}\label{lemma16}
  Let $s$ be the largest positive number such that $d(v_{r+s})\ge r+1$ and  $d(v_{r+s+1})\le r$ with  $\theta=d(v_{r+s})-r$. Then  $ \kappa\ge r+1$, $\delta_{\kappa}>0$,  $r+s\ge \kappa+1$ and  $s\ge \theta+1$.
 \end{lemma}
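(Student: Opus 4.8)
The plan is to read off every assertion from the stepwise structure of $A(G')$ together with the definition of $\kappa=\kappa(G')$. The inequality $\kappa\ge r+1$ is already contained in Lemma~\ref{lemma11}, so I concentrate on determining the degree of $v_{\kappa+1}$ and on a uniform upper bound for the degrees of the high-index vertices.

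First I would pin down $d_{G'}(v_{\kappa+1})$. By the definition of $\kappa$ we have $a_{\kappa+1,\kappa}=1$; applying the stepwise property with $h=\kappa+1$, $k=\kappa$ shows that $a_{ij}=1$ for all $1\le j<i\le \kappa+1$, so $\{v_1,\dots,v_{\kappa+1}\}$ is a clique and in particular $v_{\kappa+1}$ is adjacent to each of $v_1,\dots,v_\kappa$. On the other hand the maximality of $\kappa$ gives $a_{\kappa+2,\kappa+1}=0$, and the stepwise property then forbids $v_{\kappa+1}$ from having any neighbour of index exceeding $\kappa+1$. Hence $d_{G'}(v_{\kappa+1})=\kappa$. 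Since $\kappa+1>\kappa$, the vertex $v_{\kappa+1}$ is counted by $\delta_\kappa$, so $\delta_\kappa>0$. Moreover $d_{G'}(v_{\kappa+1})=\kappa\ge r+1$ by Lemma~\ref{lemma11}, so the index $\kappa+1$ satisfies the defining condition for $s$; as $r+s$ is by definition the largest index whose degree is at least $r+1$, this yields $r+s\ge \kappa+1$.

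Next I would establish the bound $d_{G'}(v_i)\le i-1$ for every $i\ge \kappa+1$. If some $v_i$ with $i\ge\kappa+1$ had a neighbour $v_j$ with $j>i$, then $a_{ji}=1$ with $j>i$, and the stepwise property (taking $h=j$, $k=i$, and reading off the entry in position $(i+1,i)$) would force $a_{i+1,i}=1$, contradicting $i\ge\kappa+1>\kappa$. Thus every neighbour of $v_i$ has index smaller than $i$, whence $d_{G'}(v_i)\le i-1$. Applying this with $i=r+s$ (legitimate since $r+s\ge\kappa+1$) gives $d_{G'}(v_{r+s})\le r+s-1$, i.e.\ $\theta=d_{G'}(v_{r+s})-r\le s-1$, which is exactly $s\ge\theta+1$.

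The one point that needs a little care is that $s$ is well defined, i.e.\ that some vertex has degree at most $r$ so that $v_{r+s+1}$ exists. I would argue this by edge counting: the clique on $\{v_1,\dots,v_{\kappa+1}\}$ together with the fact that each $v_i$ with $i\ge\kappa+2$ joins only to earlier vertices gives $m=\binom{\kappa+1}{2}+\sum_{i=\kappa+2}^n d_{G'}(v_i)$; if every degree were at least $r+1$, this lower bound, viewed as a function of $\kappa\ge r+1$, is strictly increasing in $\kappa$ and already equals $(r+1)n-\tfrac{(r+1)(r+2)}{2}>rn-\tfrac{r(r+1)}{2}\ge m$ at $\kappa=r+1$, a contradiction. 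I expect this well-definedness step, rather than the three displayed inequalities, to be the only place where the hypothesis on $m$ is genuinely used; the remaining assertions are immediate consequences of the stepwise structure and the value of $d_{G'}(v_{\kappa+1})$.
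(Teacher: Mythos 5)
Your proof is correct, and it takes a noticeably different (and in places more complete) route than the paper's. For $r+s\ge\kappa+1$ the paper argues in the opposite direction: from $d_{G'}(v_{r+s+1})\le r$ it gets $a_{r+s+1,r+s}=0$, and the stepwise property then forces $\kappa\le r+s-1$; you instead compute $d_{G'}(v_{\kappa+1})=\kappa$ exactly and compare with the defining property of $s$. Your computation has the added benefit of proving $\delta_\kappa>0$ explicitly, a claim of the lemma that the paper's proof never addresses (it is implicit there in the same observation $d_{G'}(v_{\kappa+1})=\kappa$). For $s\ge\theta+1$ the paper argues by contradiction: assuming $s\le\theta$, it chases matrix entries ($a_{r+s,r+\theta+1}=1$, $a_{r+s+1,r+\theta+1}=0$, then symmetry) to conclude $d_{G'}(v_{r+\theta+1})=r+s$, contradicting the definition of $s$; your direct route through the uniform bound $d_{G'}(v_i)\le i-1$ for all $i\ge\kappa+1$ is cleaner and isolates the one structural fact actually being used, namely that no vertex of index above $\kappa$ has a later neighbour. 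Finally, you verify that $s$ is well defined via the identity $m=\binom{\kappa+1}{2}+\sum_{i=\kappa+2}^{n}d_{G'}(v_i)$ together with the hypothesis $m\le rn-\frac{r(r+1)}{2}$, a point the paper takes for granted; this is also the only place where the numerical hypotheses enter beyond their use in Lemma~\ref{lemma11}. Both arguments rest on the same stepwise-matrix structure, but yours is more self-contained and fills two gaps ($\delta_\kappa>0$ and the existence of $s$) that the paper leaves implicit.
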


 \begin{proof}
 %By the definition of $\kappa$, $a_{\kappa+2, \kappa+1}=0$ which implies $a_{\kappa+1, \kappa+2}=0$. In addition, $a_{\kappa+1, \kappa}=1$. So
% $d(v_{\kappa+1})=\kappa$ and $\delta_\kappa>0$.
%Furthermore, $d(v_{i})\le \kappa$ for $i=\kappa+2, \ldots, n$. Hence  $(r-1)n-\frac{r(r-1)}{2}<m\le \kappa(n-\kappa)+\frac{\kappa(\kappa-1)}{2}=\kappa n-\frac{\kappa(\kappa+1)}{2}$, which implies $\kappa> r+1$.

 By the definition of $s$, $a_{r+s+1,r+s}=0$. Hence by the definition of $\kappa$, we have  $a_{\kappa+1, \kappa}=1$ and $r+s\ge \kappa+1$.

 Suppose that $s\le \theta$.  By the definition of the stepwise adjacency matrix $A(G')$, we have $a_{r+s,r+\theta+1}=1$ and $a_{r+s,r+\theta+2}=0$. By the definition of $s$, we have $a_{r+s+1,r+\theta+1}=0$. In addition, since $A(G')$ is symmetric, we have $a_{r+\theta+1,r+s}=1$ and $a_{r+\theta+1,r+s+1}=0$, which implies that
$d_{G'}(v_{r+\theta+1})=r+s< r+\theta$, it is a contradiction. So the assertion holds.
 \end{proof}

\begin{lemma}\label{lemma17}
The edge number of the induced subgraph by vertex set $U_2=\{v_{r+1}, \ldots, v_n\}$ is at most $\sum_{j=1}^{r-1}(r-j)\delta_j$.
\end{lemma}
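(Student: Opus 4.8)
The plan is to count the edges inside $U_2=\{v_{r+1},\dots,v_n\}$ by exploiting the single elementary consequence of the stepwise structure that, for $i>j$, one has $v_i\sim v_j$ if and only if $d_{G'}(v_i)\ge j$. I would then classify each edge of $G'$ by the \emph{smaller} index of its two endpoints: an edge lies inside $U_2$ precisely when its smaller endpoint has index at least $r+1$. Writing $U_1=\{v_1,\dots,v_r\}$ and letting $E_1$ denote the number of edges having their smaller endpoint in $U_1$ (equivalently, the edges incident with $U_1$, since an edge $v_iv_j$ with $i>j>r$ cannot meet $U_1$), this gives $e(G'[U_2])=m-E_1$.

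The first real step is to evaluate $E_1$. By Lemma~\ref{lemma11} we have $\kappa(G')\ge r+1$, and the stepwise property (applied to $a_{\kappa+1,\kappa}=1$) forces $v_1,\dots,v_{\kappa+1}$ to form a clique; hence each $v_j$ with $j\le r$ is adjacent to all of $v_1,\dots,v_{j-1}$. Therefore the number of neighbours of $v_j$ with index larger than $j$ equals $d_{G'}(v_j)-(j-1)$, and summing this ``forward degree'' over $j=1,\dots,r$ counts every edge incident with $U_1$ exactly once, at its smaller endpoint. This yields $E_1=\sum_{j=1}^{r}\bigl(d_{G'}(v_j)-j+1\bigr)$, and hence $e(G'[U_2])=m-\sum_{j=1}^{r}\bigl(d_{G'}(v_j)-j+1\bigr)$.

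Next I would invoke the size restriction $m\le rn-\frac{r(r+1)}{2}=\sum_{j=1}^{r}(n-j)$ to obtain $e(G'[U_2])\le \sum_{j=1}^{r}\bigl(n-1-d_{G'}(v_j)\bigr)$, after which it remains to prove the identity $\sum_{j=1}^{r}\bigl(n-1-d_{G'}(v_j)\bigr)=\sum_{j=1}^{r-1}(r-j)\delta_j$. The key observation is that $n-1-d_{G'}(v_j)$ is exactly the number of non-neighbours of $v_j$; by the adjacency rule above, for $j\le r$ these non-neighbours are precisely the vertices of degree at most $j-1$. Since $d_{G'}(v_{r+1})\ge r+1$ by Lemma~\ref{lemma11}, every vertex of degree $\ell\le r-1$ has index exceeding $r+1>\ell$, so it is counted by $\delta_\ell$, and in fact $\delta_\ell$ equals the \emph{total} number of degree-$\ell$ vertices. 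Consequently $n-1-d_{G'}(v_j)=\sum_{\ell=1}^{j-1}\delta_\ell$ (the case $j=1$ being the trivial fact $n-1-d_{G'}(v_1)=0$, i.e. that a connected threshold graph has a universal vertex), and interchanging the order of summation gives $\sum_{j=1}^{r}\sum_{\ell=1}^{j-1}\delta_\ell=\sum_{\ell=1}^{r-1}(r-\ell)\delta_\ell$, which is the claimed bound.

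The two summation rearrangements are routine; the step that needs care is the bookkeeping that turns both $E_1$ and each $n-1-d_{G'}(v_j)$ into clean degree counts. The main obstacle will be aligning the degree classes with the index ranges: one must know that all low-degree vertices (degree $\le r-1$) sit beyond position $r+1$, and that $v_1,\dots,v_r$ are adjacent to every earlier vertex, so that each edge and each non-edge is assigned to a unique degree class. Both facts are supplied by Lemma~\ref{lemma11} together with the stepwise (threshold) structure, and once they are in place the inequality follows immediately by combining the exact edge count with the size bound $m\le rn-\frac{r(r+1)}{2}$.
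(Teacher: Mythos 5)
Your proof is correct and is essentially the paper's argument: both decompose $m$ into the edges meeting $U_1=\{v_1,\dots,v_r\}$ plus $e(U_2)$, use the clique structure of $U_1$ and the threshold/stepwise adjacency rule to express the non-edges at $U_1$ through the quantities $\delta_j$, and then apply the size bound $m\le rn-\frac{r(r+1)}{2}$. The only difference is bookkeeping: the paper counts the $r-j$ missing edges from each degree-$j$ vertex of $U_2$ to get $e(U_1,U_2)=r(n-r)-\sum_{j=1}^{r-1}(r-j)\delta_j$, whereas you count the $\sum_{\ell<j}\delta_\ell$ non-neighbours of each $v_j$ and swap the order of summation --- the same double count of non-adjacent pairs, read by columns instead of rows.
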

\begin{proof}
Let $U_1=\{v_1, \ldots, v_r\}$ and $U_2=\{v_{r+1}, \ldots, v_n\}$.  Denote by $e(U_1,U_2)$ the edge number  between vertex sets $U_1$ and $U_2$, and $e(U_2)$ the edge number in vertex set $U_2$, respectively.
 %Denote by
%$\Phi_0=r(n-r)-e(U_1, U_2)$ and $\Phi_1=e(U_2)$,respectively.   Then
%$\Phi_0=\sum_{j=1}^{r-1}(r-j)\delta_j$ and $\Phi_0\ge \Phi_1$.
 Since $G'\neq S_{n,m}$ is a connected threshold graph with $m> (r-1)n-\frac{r(r-1)}{2}$,   $d(v_{r+1})\ge r$ and   $U_1$ is a clique of order $r$ by Lemma~\ref{lemma11}. Let $W_j=\{u\in U_2:\ d(u)=j\}$ for $j=1, \ldots, r-1.$
 By the definition of $\delta_j$,  we have $|W_j|=\delta_j$. Further, by $G'$ being a threshold graph, each vertex in $W_j$ is not adjacent to $v_{j+1}, \ldots, v_r$  for $j=1, \ldots, r-1$ and each vertex in $U_2\backslash \bigcup_{j=1}^{r-1}W_j$ is adjacent to $v_1, \ldots, v_r$. Hence
 $$e(U_1, U_2)=r(n-r)-\sum_{i=1}^{r-1}(r-j)\delta_j.$$
   Furthermore, by $m\le rn-\frac{r(r+1)}{2}$, we have
 $$e(U_1)+e(U_1, U_2)+e(U_2)=\frac{r(r-1)}{2}+r(n-r)-\sum_{i=1}^{r-1}(r-j)\delta_j+e(U_2)=m\le rn-\frac{r(r+1)}{2},$$  which implies $e(U_2)\le \sum_{j=1}^{r-1}(r-j)\delta_j$.
\end{proof}

\begin{lemma}\label{lemma13}
%Let $G'\neq S_{n,m}$  be a  graph having the maximal $A_\alpha$-spectral radius of  $ \mathcal{H}_{n,m}$.
%Denote by $\kappa=\kappa(G')=\max\{j:a_{j+1,j}=1 \}$ and $\delta_j=\delta_j(G')=|\{i: d_{G'}(v_i)=j, i>j\}|$ for $j=1,\cdots,\kappa$.
If there exist two positive integers $1\leq h\leq \kappa-4$ and $3\leq l\leq \kappa-h-1$
such that $\delta_h> 0$, $\delta_{h+1}>0$, $\delta_{h+2}=\cdots=\delta_{h+l}=0$, $\delta_{h+l+1}> 0$, which is depicted in Figure \ref{fig:fhl}, then

(i).  $\delta_h \le l-1$ and  $\delta_{h+1} \le l-2$. %, $\sum_{j=h}^{h+l} \delta_j \le 2l-3 $;

(ii).  $\delta_{h+l+1} \le l-2$ and  $\delta_{h+l+2} \le l-1$ if $h+l+2\le \kappa$.  %, $\sum_{j=h+l+1}^{h+l+2} \delta_j \le 2(l+1)-5$, if $\delta_{h+l+2}$ exists.

(iii). $\sum_{j=h}^{h+l} \delta_j \le 2l-3 $\  and\ $\sum_{j=h+l+1}^{h+l+2} \delta_j \le 2(l+1)-5$\ if $h+l+2\le \kappa$.

\begin{figure}[htbp]
\centering
\begin{tabular}{cccccccc}
  & & & &\multicolumn{4}{c}{\textbf{\begin{tabular}[c]{@{}c@{}}$l$\end{tabular}}} \\
  & & & &\multicolumn{4}{c}{\downbracefill}  \\
 \hline
 & & h &h+1          & h+2 &  $\cdots$  &  h+$l$ & h+$l$+1  \\
 \hline
     & $n-\sum_{j=1}^{h+1}\delta_j$
      & 1 &1          &  1 &  $\cdots$   & 1   &   1 \\
 %\multirow{4}{*}{\emph{$\delta_{h+1}$}\Bigg\{ }
  & $n-\sum_{j=1}^{h+1}\delta_j+1$  & 1 &1          &   &     &    &    \\
  &$\vdots$ & $\vdots$ &$\vdots$          &   &     &    &    \\
  & $n-\sum_{j=1}^{h}\delta_j$   & 1 &1          &   &     &    &    \\
%\multirow{4}{*}{\emph{\mbox{$\delta_{h}$  } }\Bigg\{ }
   & $n-\sum_{j=1}^{h}\delta_j+1$  & 1 &0          &   &     &    &    \\
 & $\vdots$ & $\vdots$ &$\vdots$         &   &     &    &    \\
 & $n-\sum_{j=1}^{h-1}\delta_j$  & 1 &   0      &   &     &    &    \\
\end{tabular}%
\caption{$\delta_{h}\neq 0$, $\delta_{h+1}\neq 0$, $\delta_{h+l+1}\neq 0$, $\delta_{h+2}=\cdots=\delta_{h+l}=0$}
\label{fig:fhl}
\end{figure}
%By Claim \ref{h0}, $l>1$.
\end{lemma}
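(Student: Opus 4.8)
The plan is to prove (i) and (ii) by contradiction, each time exhibiting an edge-switching transformation (Definition~\ref{def} or Definition~\ref{def2}) that would strictly increase $\rho_\alpha$, contradicting the extremality of $G'$; part (iii) then follows by pure arithmetic. Indeed, since the hypothesis forces $\delta_{h+2}=\cdots=\delta_{h+l}=0$, we have $\sum_{j=h}^{h+l}\delta_j=\delta_h+\delta_{h+1}$ and $\sum_{j=h+l+1}^{h+l+2}\delta_j=\delta_{h+l+1}+\delta_{h+l+2}$; combined with (i) and (ii) these give $\delta_h+\delta_{h+1}\le(l-1)+(l-2)=2l-3$ and $\delta_{h+l+1}+\delta_{h+l+2}\le(l-2)+(l-1)=2(l+1)-5$, which is exactly (iii). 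So the entire content lies in (i) and (ii), and these are mirror images of one another: (i) bounds the two degree-classes sitting immediately below the block of $l-1$ absent degrees $h+2,\dots,h+l$, while (ii) bounds the two classes sitting immediately above it, with the class adjacent to the gap bounded by $l-2$ on each side and the next class out bounded by $l-1$.

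To prove (i) I would suppose a bound fails and exploit the empty slots of the gap. Consider first $\delta_{h+1}\ge l-1$. The relevant geometry, visible in Figure~\ref{fig:fhl}, is that the bottom vertex of the degree-$(h+l+1)$ block, at position $n-\sum_{j=1}^{h+1}\delta_j$, is adjacent to $v_1,\dots,v_{h+l+1}$ and in particular to all of the ``gap columns'' $v_{h+2},\dots,v_{h+l}$, whereas the $\delta_{h+1}$ vertices of degree $h+1$ below it are not. If $\delta_{h+1}\ge l-1$ there are enough such low vertices to occupy all $l-1$ empty slots, so one may apply a single transformation of Definition~\ref{def} with $k=q+1$ and block-length $l_T=l-2$, which lowers that degree-$(h+l+1)$ vertex and raises $l-1$ of the degree-$(h+1)$ vertices into the gap. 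By Lemma~\ref{trg} this does not decrease $\rho_\alpha$, and since $l\ge3$ gives $l_T=l-2\ge1$, the equality case (which forces $l_T=0$) is excluded, so $\rho_\alpha$ strictly increases --- a contradiction. Hence $\delta_{h+1}\le l-2$; the bound $\delta_h\le l-1$ is obtained by the same device, relaxed by one because the class $\delta_h$ lies one step further from the gap (with strictness coming, when only one edge is moved, from the position condition $p>h'+1$, exactly as in Lemma~\ref{lemma12}).

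Part (ii) is proved by the transpose construction applied above the gap, using Definition~\ref{def2} and Lemma~\ref{trg1}. Here the side-condition $h+l+2\le\kappa$ is precisely what guarantees that a degree-$(h+l+2)$ class exists (and, through Lemma~\ref{lemma11}, that $\kappa\ge r+1$, so the clique structure near the top is available) to play the role that the degree-$(h+l+1)$ block played in (i); a transformation of the complementary type then pushes $\delta_{h+l+1}$ and $\delta_{h+l+2}$ into the gap from above, with strictness again following from $l_T\ge1$. Assembling the four inequalities as in the first paragraph yields (iii).

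The step I expect to be the main obstacle is the index bookkeeping required to certify that each chosen transformation actually meets conditions (i)--(iii) of Definition~\ref{def}/\ref{def2}. One must pin down the block boundaries $n-\sum_{i\le j}\delta_i$ to within an off-by-one, verify the exact pattern of $0$'s and $1$'s in the rows and columns straddling the gap (that the entries in columns $h+2,\dots,h+l$ vanish where the definition demands and equal $1$ where it demands), and confirm that the parameters lie strictly inside the admissible range $q<k\le k+l_T<h'<p-l_T$ and away from the equality regime of Lemmas~\ref{trg}/\ref{trg1}. Once the transformation is correctly set up the monotonicity inequalities are automatic; it is the combinatorial verification of the hypotheses, together with choosing $l_T$ so as to realize exactly the $l-1$ gap slots, that carries all the difficulty.
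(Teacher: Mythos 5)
Your proposal is correct and takes essentially the same route as the paper: parts (i) and (ii) are proved by contradiction using the transformations of Definitions~\ref{def} and~\ref{def2} together with Lemmas~\ref{trg} and~\ref{trg1} (with strictness obtained by ruling out their equality cases), and part (iii) follows by summing the four bounds exactly as you describe. The only deviation is the bound $\delta_{h+1}\le l-2$, where you apply a single Definition~\ref{def} transformation and conclude strict increase directly from $l_T=l-2\ge 1$, whereas the paper applies that same transformation followed by a second one and extracts a contradiction from the equality conditions of Lemma~\ref{trg}; both arguments are valid, yours being slightly more direct.
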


\begin{proof} (i).   Suppose  that $\delta_h \ge l$. Let  $G_1$ be obtained from $G'$ by a Transformation $(n-\sum_{j=1}^{h}\delta_j+l,h+1;n-\sum_{j=1}^{h+1}\delta_j,h+2;l,1)$ of Definition \ref{def}.  Clearly $G_1\in \mathcal{H}_{n,m}$. By Lemma \ref{trg}, $\rho_{\alpha}(G_1) > \rho_{\alpha}(G')$, which contradicts to $G'$ having the maximal $A_{\alpha}$-spectral radius in  $\mathcal{H}_{n,m}$. Hence $\delta_h \leq l-1$.

Furthermore, suppose that $\delta_{h+1} \ge l-1$. Let   $G_2$  be the threshold graph of order $n$ with size $m$ obtained from $G'$ by a Transformation $(n-\sum_{j=1}^{h+1}\delta_j+l-1,h+2;n-\sum_{j=1}^{h+1}\delta_j,h+3;l-1,1)$ of Definition \ref{def} and $G_3$  be the graph obtained from $G_2$ by a Transformation $(n-\sum_{j=1}^{h}\delta_j+1,h+1;n-\sum_{j=1}^{h+1}\delta_j+l-1,h+2;1,1)$ of Definition \ref{def}. By Lemma \ref{trg}, $\rho_{\alpha}(G_2) \geq \rho_{\alpha}(G')$ and $\rho_{\alpha}(G_3) \geq \rho_{\alpha}(G_2)$.  On the other hand, by $G_3\in \mathcal{H}_{n,m}$, we have $\rho_{\alpha}(G_3)\le \rho_{\alpha}(G')$. Hence $ \rho_{\alpha}(G')=\rho_{\alpha}(G_2) =\rho_{\alpha}(G_3) $. By Lemma \ref{trg} again,  we have $n-\sum_{j=1}^{h+1}\delta_j+l-1=h+5$ and $n-\sum_{j=1}^{h+1}\delta_j+l-1=h+3$, which is a contradiction. Therefore,  $\delta_{h+1} \leq l-2$.

%Since $\delta_h \le l-1$, $\delta_{h+1} \le l-2$, $\delta_{h+2}=\cdots=\delta_{h+l}=0$, we have $\sum_{j=h}^{h+l} \delta_j \le 2(l+1)-5$.

{(ii).} Suppose that  $\delta_{h+l+1} \ge l-1$. Let  $G_1$ be the threshold graph obtained from $G'$ by a Transformation $(n-\sum_{j=1}^{h+1}\delta_j+l,h+l;n-\sum_{j=1}^{h+1}\delta_j,h+l+1;1,l-1)$ of Definition \ref{def2}.  By Lemma \ref{trg1}, $\rho_{\alpha}(G_1) > \rho_{\alpha}(G')$, which contradicts  to $G'$ having the maximal $A_{\alpha}$-spectral radius in $\mathcal{H}_{n,m}$. Hence $\delta_{h+l+1} \leq l-2$.

Further, assume that $h+l+2\le \kappa$. If $\delta_{h+l+2} \ge l$, let $G_2$  be the graph obtained from $G'$ by a Transformation $(n-\sum_{j=1}^{h+1}\delta_j+1,h+l+1;n-\sum_{j=1}^{h+l+1}\delta_j,h+l+2;1,l)$ of Definition \ref{def2}. By Lemma \ref{trg1}, $\rho_{\alpha}(G_2) > \rho_{\alpha}(G')$, which contradicts to $G'$ having the maximal $A_{\alpha}$-spectral radius in $\mathcal{H}_{n,m}$. Hence  $\delta_{h+l+2} \leq l-1$.

(iii). It is easy to see that (iii) follows from (i) and (ii).
\end{proof}
%\end {document}

\begin{lemma}\label{lemma14}
%Let $G'\neq S_{n,m}$  be a  graph having the maximal $A_\alpha$-spectral radius of  $ \mathcal{H}_{n,m}$.
%Denote by $\kappa=\kappa(G')=\max\{j:a_{j+1,j}=1 \}$ and $\delta_j=\delta_j(G')=|\{i: d_{G'}(v_i)=j, i>j\}|$ for $j=1,\cdots,\kappa$.
If there exist two positive integers $1\leq h\le \kappa-3$ and $2\leq l\leq \kappa-h-1$ such that $\delta_h>0$, $\delta_{h+1}=\delta_{h+2}=\cdots=\delta_{h+l}=0$, $\delta_{h+l+1}>0$, then

(i). $\delta_h \le l-1$ % $\sum_{j=h}^{h+l} \delta_j < 2(l+1)-5$,
 and $\delta_{h+l+1} \le l-1$.

(ii). If $h+l+2\le \kappa$, then  $\delta_{h+l+2} \le l$.
\end{lemma}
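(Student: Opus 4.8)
The plan is to establish all three bounds by contradiction, following the same pattern as Lemma~\ref{lemma13}: if a bound were violated I would build a graph $G_1\in\mathcal{H}_{n,m}$ from $G'$ by a single Transformation of Definition~\ref{def} or Definition~\ref{def2} with $k=q+1$, and then apply Lemma~\ref{trg} or Lemma~\ref{trg1} to deduce $\rho_\alpha(G_1)>\rho_\alpha(G')$, contradicting the maximality of $G'$. It is convenient to write $P_j:=n-\sum_{i=1}^{j}\delta_i$ for the number of vertices of degree exceeding $j$. The gap hypothesis $\delta_{h+1}=\cdots=\delta_{h+l}=0$ then gives $P_h=P_{h+1}=\cdots=P_{h+l}$, so that $v_{P_h}$ is the highest-indexed vertex of degree $h+l+1$ (hence adjacent to exactly $v_1,\ldots,v_{h+l+1}$), while the $\delta_h$ vertices of degree $h$ occupy the indices $P_h+1,\ldots,P_{h-1}$.

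For $\delta_h\le l-1$ I would assume $\delta_h\ge l$ and apply the Transformation $(P_h+l,\,h+1;\,P_h,\,h+2;\,l,1)$ of Definition~\ref{def}. Here $k=h+2=q+1$, the removed edges are those joining $v_{P_h}$ to $v_{h+2},\ldots,v_{h+l+1}$ (which exist precisely because $v_{P_h}$ has degree $h+l+1$), and the added edges join $v_{h+1}$ to the $l$ degree-$h$ vertices $v_{P_h+1},\ldots,v_{P_h+l}$, whose existence is guaranteed by $\delta_h\ge l$; since this transformation moves $l\ge 2$ edges its internal length parameter $l-1\ge 1$ is nonzero, so the equality clause of Lemma~\ref{trg} is excluded and the inequality is strict. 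The two remaining bounds are handled symmetrically with Definition~\ref{def2}. For $\delta_{h+l+1}\le l-1$ I assume $\delta_{h+l+1}\ge l$ and use the Transformation $(P_h+1,\,h+l;\,P_h,\,h+l+1;\,1,l)$, which deletes the $l$ edges from the top $l$ degree-$(h+l+1)$ vertices to $v_{h+l+1}$ and reroutes them, across the gap, onto a single degree-$h$ vertex $v_{P_h+1}$; Lemma~\ref{trg1} and its (again excluded) equality case give the strict increase. Finally, for $\delta_{h+l+2}\le l$ with $h+l+2\le\kappa$, I assume $\delta_{h+l+2}\ge l+1$ and move $l+1$ edges off the degree-$(h+l+2)$ block by the analogous Transformation $(P_h+1,\,h+l+1;\,P_{h+l+1},\,h+l+2;\,1,l+1)$ of Definition~\ref{def2}, concluding once more by Lemma~\ref{trg1}.

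The conceptual work is light; the real labour is verifying, for each candidate transformation, that the structural conditions (ii)--(iii) of Definition~\ref{def}/\ref{def2} hold, i.e.\ that the prescribed entries of the stepwise matrix $A(G')$ are exactly the $0$'s and $1$'s required. This is where the hypotheses enter decisively: the absence of degrees $h+1,\ldots,h+l$ forces the neighborhood of the chosen source vertex to terminate exactly at the top of the gap, so the block of edges to be removed is well-defined, while $\delta_h>0$, $\delta_{h+l+1}>0$ (together with the clique structure of the top vertices from Lemma~\ref{lemma11}) ensure the relocated edges land on genuine vertices of the intended degree. One must also check $G_1\in\mathcal{H}_{n,m}$ (immediate, since each transformation preserves size and keeps the graph a connected threshold graph) and, crucially, that each transformation moves at least two edges so that its internal length parameter is positive; this last point is exactly what rules out the equality cases of Lemmas~\ref{trg} and~\ref{trg1} and thereby upgrades $\rho_\alpha(G_1)\ge\rho_\alpha(G')$ to the strict inequality needed for the contradiction. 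The hypothesis $h+l+2\le\kappa$ in part (ii) serves only to keep the degree-$(h+l+2)$ vertices in the tail region where this transformation geometry is valid.
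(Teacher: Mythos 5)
Your argument is essentially the paper's own proof: each of the three bounds is established by contradiction via exactly the same transformations (Definition \ref{def} for $\delta_h\le l-1$, Definition \ref{def2} for the other two bounds) followed by Lemmas \ref{trg} and \ref{trg1}, with strictness obtained precisely as you say, from the internal parameter being positive ($l-1\ge 1$, resp.\ $l\ge 2$). The only discrepancy is in the bound $\delta_{h+l+1}\le l-1$: writing $P_h=n-\sum_{j=1}^{h}\delta_j$, you take the receiving vertex to be $v_{P_h+1}$, whereas the paper uses $p=P_h+l$; your choice is in fact the sound one, since condition (ii) of Definition \ref{def2} forces $v_p$ to have degree exactly $h$, and only $\delta_h>0$ (indeed $\delta_h\le l-1$ by the first bound) is available, so the paper's ``$+l$'' appears to be a slip that your version silently corrects.
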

\begin{proof}
%The proof of $\delta_h \le l-1$ is similar to Claim \ref{h1}.
(i). Suppose that $\delta_h \ge l$. Let $G_1$ be obtained from $G'$ by a Transformation $(n-\sum_{j=1}^{h}\delta_j+l,h+1;n-\sum_{j=1}^{h}\delta_j,h+2;l,1)$ of Definition \ref{def}. By Lemma \ref{trg} and $l\ge 2>1$, $\rho_{\alpha}(G_1) > \rho_{\alpha}(G')$, which contradicts to $G'$ having the maximal $A_{\alpha}$-spectral radius in $\mathcal{H}_{n,m}$. Hence, $\delta_h \le l-1$.

Suppose that $\delta_{h+l+1} \ge l$.  Let  $G_2$ be obtained from $G'$ by a Transformation $(n-\sum_{j=1}^{h}\delta_j+l,h+l;n-\sum_{j=1}^{h}\delta_j,h+l+1;1,l)$ of Definition \ref{def2}.  By Lemma \ref{trg1} and $l\ge 2>1$, $\rho_{\alpha}(G_1) > \rho_{\alpha}(G')$, which contradicts  to $G'$ having the maximal $A_{\alpha}$-spectral radius in $\mathcal{H}_{n,m}$.  Hence $\delta_{h+l+1} \leq l-1$.
 Therefore (i) holds.

 (ii). Suppose  that $\delta_{h+l+2} \ge l+1$. Let $G_2$  be the graph obtained from $G'$ by a Transformation $(n-\sum_{j=1}^{h}\delta_j+1,h+l+1;n-\sum_{j=1}^{h+l+1}\delta_j,h+l+2;1,l+1)$ of Definition \ref{def2}. By Lemma \ref{trg1} and $l+1\ge 3>1$, we have $\rho_{\alpha}(G_2) > \rho_{\alpha}(G')$, which contradicts to $G'$ having the maximal $A_{\alpha}$-spectral radius in $\mathcal{H}_{n,m}$.  Hence $\delta_{h+l+2} \leq l$.  So (ii) holds.
 \end{proof}

\begin{lemma}\label{lemma15}
%Let $G'\neq S_{n,m}$  be a  graph having the maximal $A_\alpha$-spectral radius of  $ \mathcal{H}_{n,m}$.
%Denote by $\kappa=\kappa(G')=\max\{j:a_{j+1,j}=1 \}$ and $\delta_j=\delta_j(G')=|\{i: d_{G'}(v_i)=j, i>j\}|$ for $j=1,\cdots,\kappa$.
If there exists one positive integer $1\leq h\leq \kappa-2$ such that $\delta_h> 0$, $\delta_{h+1}=0$ and $\delta_{h+2}> 0$, then  $\alpha =\frac{1}{2}$, $h=\kappa-2$,  $\delta_{\kappa}=1$ and $n-\sum_{j=1}^{\kappa-2}\delta_j=\kappa+1$.
\end{lemma}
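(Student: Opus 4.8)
The plan is to dispatch this as the residual single-gap case left open by Lemma~\ref{lemma14}. For a gap $\delta_{h+1}=\cdots=\delta_{h+l}=0$ of length $l\ge 2$ one can always exhibit a strictly $\rho_\alpha$-increasing transformation and contradict the maximality of $G'$; a gap of length one is precisely the boundary case where the relevant transformation is only weakly increasing, so instead of a contradiction it forces $G'$ into a rigid extremal shape. Accordingly, I would construct a single explicit edge-transformation of the type in Definition~\ref{def0} (with $k=q+1$), apply Lemma~\ref{trij} to get $\rho_\alpha(G_1)\ge\rho_\alpha(G')$, invoke maximality of $G'$ to force equality, and then read the three asserted identities off the very restrictive equality clause of Lemma~\ref{trij}.

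Concretely, write $b=n-\sum_{j=1}^{h}\delta_j$. Since $\delta_{h+1}=0$, the degree-$h$ vertices occupy positions $b+1,\dots,b+\delta_h$, while the block of degree-$(h+2)$ vertices counted by $\delta_{h+2}>0$ ends exactly at position $b$ (position $b$ must have degree $h+2$: it has degree $>h$, it cannot be $h+1$ because $\delta_{h+1}=0$ and $b>h+1$, and it cannot exceed $h+2$ or else no degree-$(h+2)$ vertex would lie at a position $>h+2$). I would apply the Transformation $(b+1,\,h+1;\,b,\,h+2)$, deleting $v_bv_{h+2}$ and inserting $v_{b+1}v_{h+1}$; here $q=h+1$ and $k=h+2=q+1$, so Lemma~\ref{trij} applies. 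The substance of the verification is conditions (ii)–(iii) of Definition~\ref{def0}: that $v_{b+1}$ is adjacent to $v_1,\dots,v_h$ but not $v_{h+1}$, that $v_b$ has $v_{h+2}$ as its last neighbour with no vertex of larger index adjacent to $v_{h+2}$, and — the one genuinely non-trivial point — that every $v_i$ with $h+2\le i\le b$ is adjacent to $v_{h+1}$. The last of these follows from nestedness of threshold neighbourhoods (Lemma~\ref{1}): since $h+1\le\kappa-1$ the stepwise property gives $v_{h+1}\sim v_{h+2}$, and for $i\ge h+3$ one has $d_i\ge d_b=h+2$, whence $N(v_i)\supseteq N(v_b)\setminus\{v_i\}=\{v_1,\dots,v_{h+2}\}\ni v_{h+1}$. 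I would also record the a priori bound $b\ge h+3$ (positions $1,\dots,h+2$ all have degree $>h$ by nestedness against $v_b$, and $\delta_{h+2}>0$ supplies a further degree-$(h+2)$ vertex at a position $>h+2$), which guarantees condition (i), namely $q<k<b<b+1$.

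With the transformation valid, $G_1\in\mathcal H_{n,m}$ is again a connected threshold graph, so Lemma~\ref{trij} yields $\rho_\alpha(G_1)\ge\rho_\alpha(G')$, and maximality forces $\rho_\alpha(G_1)=\rho_\alpha(G')$. The equality clause of Lemma~\ref{trij} requires $\alpha=\tfrac12$ and that the four transformation indices be consecutive; with $q=h+1$, $k=h+2$ and $p=b+1$ this forces $b=h+3$. It remains to convert $b=h+3$ into the stated conclusions. Computing $\kappa$ directly: $v_b=v_{h+3}\sim v_{h+2}$ gives $a_{h+3,h+2}=1$, so $\kappa\ge h+2$; while $v_{b+1}=v_{h+4}$ has degree $h$ and so is not adjacent to $v_{h+3}$, and every vertex beyond position $h+3$ has degree $\le h$, so $a_{j+1,j}=0$ for all $j\ge h+3$. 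Hence $\kappa=h+2$, i.e. $h=\kappa-2$. Then $b=h+3=\kappa+1$ reads $n-\sum_{j=1}^{\kappa-2}\delta_j=\kappa+1$; and since the $\delta_\kappa$-counted degree-$\kappa$ block ends at position $b=\kappa+1$ but only positions exceeding $\kappa$ are counted, self-consistency of the definition of $\delta_\kappa$ leaves room for exactly one such vertex, forcing $\delta_\kappa=1$.

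The main obstacle is concentrated in the first two steps rather than in any spectral estimate: one must choose the transformation whose unique equality case happens to encode precisely the target configuration, and then check its defining adjacency conditions of Definition~\ref{def0} (especially the ``middle'' condition $a_{i,h+1}=1$ for $h+2\le i\le b$, and the a priori separation $b>h+2$). Once the transformation is in place and equality is forced, Lemma~\ref{trij} carries all the analytic weight and the three equalities $\alpha=\tfrac12$, $h=\kappa-2$, $\delta_\kappa=1$, $n-\sum_{j=1}^{\kappa-2}\delta_j=\kappa+1$ are purely combinatorial consequences of $b=h+3$.
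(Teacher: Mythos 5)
Your proposal is correct and takes essentially the same route as the paper: the identical Transformation $(b+1,\,h+1;\,b,\,h+2)$ of Definition \ref{def0} (the paper writes $b=n-\sum_{j=1}^{h}\delta_j$ explicitly), followed by Lemma \ref{trij} plus maximality of $G'$ to force equality, whose equality clause yields $\alpha=\frac{1}{2}$ and $b=h+3$. The only (cosmetic) difference is in the final bookkeeping: the paper converts $b=h+3$ into $h=\kappa-2$ and $\delta_{\kappa}=1$ via the counting identity $n=\sum_{j=1}^{\kappa}\delta_j+\kappa$ and nonnegativity, whereas you compute $\kappa=h+2$ directly from the stepwise adjacency structure; both are routine and valid.
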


\begin{proof} Let $G_1$  be the connected threshold graph  obtained from $G'$ by a Transformation $(n-\sum_{j=1}^{h}\delta_j+1,h+1;n-\sum_{j=1}^{h}\delta_j,h+2)$ of Definition \ref{def0}. By  Lemma \ref{trij}, $\rho_{\alpha}(G_1) \ge \rho_{\alpha}(G')$. On the other hand, by the definition of $G'$, we have  $\rho_{\alpha}(G_1) \le \rho_{\alpha}(G')$. Hence  $\rho_{\alpha}(G_1) =\rho_{\alpha}(G')$. Therefore  by  Lemma \ref{trij} again, we have $\alpha =\frac{1}{2}$  and  $n-\sum_{j=1}^{h}\delta_j=h+3$.
In addition, by the definition of $\kappa$, it is easy to see that $n=\sum_{j=1}^\kappa\delta_j+\kappa$. Hence by $\delta_{\kappa}\ge 1$,
$$n= \sum_{j=1}^\kappa\delta_j+\kappa=n+( \sum_{j=h+1}^\kappa\delta_j-1)+(\kappa-h-2)\ge n.$$
Therefore   $\kappa-h-2=0$ and $\sum_{j=h+1}^\kappa\delta_j=1$. So the assertion holds.
\end{proof}

 In order to present more properties of extremal graphs in $\mathcal{H}_{n,m}$, we also need the following symbol.
Let $G'\neq S_{n,m}$ be a connected threshold graph with the $n\times n$ stepwise adjacency matrix $A(G')=(a_{ij})$ and the degree sequence  $d_{G'}(v_1)\geq d_{G'}(v_2)\geq \ldots\geq d_{G'}(v_n)$. Let $s$ be the largest positive number such that $d(v_{r+s})\ge r+1$ and  $d(v_{r+s+1})\le r$. Moreover denote by $\theta=d(v_{r+s})-r$.
%Furthermore, let $n_0=n$,  $h_0=d_{G'}(v_{n_0})<\kappa-1$, $n_i=n_{i-1}-\delta_{h_{i-1}}-\delta_{h_{i-1}+1}$, $h_i=d_{G'}(v_{n_i})<\kappa-1$, $l_i=h_i-h_{i-1}-1\ge 1$,  where $i=1, \ldots, t-1$, and $n_t= n_{t-1}-\delta_{h_{t-1}}-\delta_{h_{t-1}+1}$, $h_t=d_{G'}(v_{n_t})\ge \kappa-1$.

%Let $G'\neq S_{n,m}$ be a connected threshold graph with the $n\times n$ stepwise adjacency matrix $A(G')=(a_{ij})$ and the degree sequence  $d_{G'}(v_1)\geq d_{G'}(v_2)\geq \ldots\geq d_{G'}(v_n)$.  Then
% it is easy to see that $d_{G'}(v_n)\le \kappa-2$ (otherwise  $d_{G'}(v_n)\ge \kappa-1$  which implies that $m\ge rn-\frac{r(r+1)}{2}$ by $\kappa\ge r+1$. Hence $m\ge rn-\frac{r(r+1)}{2}$ and $G=S_{n,m}$).
 %it is easy to see that $d_{G'}(v_n)\le \kappa-2$ (If $d_{G'}(v_n)= \kappa$, then the degree sequence of $G'$ is $(n-1, \ldots, n-1, \kappa, \ldots \kappa)$  which implies $G'=K_{\kappa}\vee (n-\kappa)K_1=S_{n,m}$. If $d_{G'}(v_n)= \kappa-1$, then  the degree sequence of $G'$ is $(n-1, \ldots, n-1, \kappa, \ldots \kappa, \kappa-1, \ldots, \kappa-1))$ which implies that
%$G'=K_{\kappa-1}\vee (K_{1,\delta_\kappa}\cup(n-\kappa-\delta_\kappa)K_1)=S_{n,m}$).
 %Therefore,  let $n_0=n$,  $h_0=d_{G'}(v_{n_0})\le \kappa-2$, $n_i=n_{i-1}-\delta_{h_{i-1}}-\delta_{h_{i-1}+1}$, $h_i=d_{G'}(v_{n_i})\le \kappa-2$, $l_i=h_i-h_{i-1}-1\ge 1$,  where $i=1, \ldots, t-1$, and $n_t= n_{t-1}-\delta_{h_{t-1}}-\delta_{h_{t-1}+1}$, $h_t=d_{G'}(v_{n_t})\ge \kappa-1$.

\begin{lemma}\label{lemma18}
There exists a $1\le h\le \kappa-2$ such that $\delta_h>0$, $\delta_{h+1}=0$ and $\delta_{h+2}>0$.
\end{lemma}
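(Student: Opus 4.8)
The plan is to argue by contradiction: I assume that no index $h$ with $1\le h\le \kappa-2$ satisfies $\delta_h>0$, $\delta_{h+1}=0$, $\delta_{h+2}>0$, and derive a contradiction with $n$ being large. First I record the shape of the sequence $(\delta_1,\dots,\delta_\kappa)$. By Lemma~\ref{lemma12} no three consecutive entries are positive, so every maximal run of positive entries (a \emph{block}) has length at most $2$; by the contradiction hypothesis no single zero is flanked by two positive entries, so every maximal run of zeros lying strictly between two blocks (an \emph{internal gap}) has length at least $2$. By Lemma~\ref{lemma16} we have $\kappa\ge r+1$ and $\delta_\kappa>0$, so position $\kappa$ lies in a block. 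Moreover $v_{r+1},\dots,v_{\kappa+1}$ form a clique of order $\kappa+1-r\ge 2$ inside $U_2=\{v_{r+1},\dots,v_n\}$, whence $e(U_2)\ge\binom{\kappa+1-r}{2}>0$; Lemma~\ref{lemma17} then gives $\sum_{j=1}^{r-1}(r-j)\delta_j\ge e(U_2)>0$, so some $\delta_{j_0}>0$ with $j_0\le r-1$. Thus the support of $(\delta_j)$ reaches from a position $\le r-1$ up to $\kappa$, forcing at least one internal gap of length $\ge 2$ between the bottom block and the top block.

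Next I bound the positive entries. Applying Lemma~\ref{lemma14} (and Lemma~\ref{lemma13} to the lower endpoint of a length-$2$ block) to each internal gap, every positive $\delta_j$ with $j<\kappa$ that is adjacent to an internal gap is at most the length of that gap minus one, hence at most $\kappa-1$; charging each gap to its at most two neighbouring entries then bounds $\sum_{j<\kappa}\delta_j$ by roughly twice the total gap length, which is at most $\kappa$. In particular $\delta_{j_0}\le\kappa-1$. Feeding these bounds back into $\binom{\kappa+1-r}{2}\le\sum_{j=1}^{r-1}(r-j)\delta_j$ bounds $\kappa$ in terms of $r$, after which $n-\kappa=\sum_{j=1}^{\kappa}\delta_j$ is bounded by a function of $r$ alone, contradicting $n>\tfrac{30r-63+5\sqrt{32r^2-136r+137}}{2}$.

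It is instructive to see where the forbidden length-$1$ gap actually tends to appear, namely at the degree transition isolated in Lemma~\ref{lemma16}. Since $d(v_{r+s})=r+\theta$ and $d(v_{r+s+1})\le r$ with the degrees non-increasing, no vertex has degree in $\{r+1,\dots,r+\theta-1\}$, so $\delta_{r+1}=\dots=\delta_{r+\theta-1}=0$ while $\delta_{r+\theta}>0$ and some $\delta_j>0$ with $j\le r$. When $\theta=2$ this already reads off $\delta_r>0,\ \delta_{r+1}=0,\ \delta_{r+2}>0$; the counting argument above is what rules out the remaining configurations in which the transition band and the low block are separated by gaps of length $\ge 2$.

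The main obstacle is the quantitative bookkeeping needed to reach the stated threshold on $n$. The crude estimates above only yield $n$ polynomial in $r$; matching the sharp bound requires tracking the individual gap lengths and the weights $(r-j)$ precisely, and separately controlling the boundary data, namely $\delta_\kappa$ at the top, the lowest block, and the awkward case of a length-$2$ block capped by a length-$2$ gap, which Lemma~\ref{lemma13} does not cover directly and must be handled by an auxiliary transformation.
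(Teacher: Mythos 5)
Your strategy is the same as the paper's: assume no $h\le\kappa-2$ has $\delta_h>0$, $\delta_{h+1}=0$, $\delta_{h+2}>0$; use Lemma~\ref{lemma12} plus this hypothesis to force blocks of length $\le 2$ separated by gaps of length $\ge 2$; bound the positive $\delta_j$ via Lemmas~\ref{lemma13} and~\ref{lemma14}; sandwich $e(U_2)$ between the clique bound and Lemma~\ref{lemma17}; and conclude that $n$ is bounded in terms of $r$. But there is a genuine gap, and you name it yourself: your estimates ``only yield $n$ polynomial in $r$,'' and you defer the sharp bookkeeping. That deferral is fatal. The standing hypothesis is only $n>\frac{30r-63+5\sqrt{32r^2-136r+137}}{2}$, which is linear in $r$ (roughly $29r$), so a bound $n\le p(r)$ with an unspecified polynomial $p$ --- or even a linear bound with a worse constant, which is what a loose charging scheme produces (charging each gap to up to four neighbouring entries gives $\sum_j\delta_j\le 4\kappa$ rather than $2\kappa-7$) --- yields no contradiction at all. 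The paper closes exactly this gap with a telescoping count: it sets $n_0=n$, $h_0=d_{G'}(v_n)$, and recursively $n_i=n_{i-1}-\delta_{h_{i-1}}-\delta_{h_{i-1}+1}$, $h_i=d_{G'}(v_{n_i})$, so that each block-plus-gap segment satisfies $\sum_{j=h_{i-1}}^{h_i-1}\delta_j\le 2(h_i-h_{i-1})-5$ by Lemmas~\ref{lemma13} and~\ref{lemma14}; summing gives $\sum_{i=1}^{\kappa-2}\delta_i\le 2\kappa-7$, which with $\delta_\kappa\le\kappa-3$, $\delta_{\kappa-1}\le\kappa-4$ yields $n\le 5\kappa-14$. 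Separately, the weighted estimate $\sum_{i=1}^{r-1}(r-i)\delta_i\le 2\delta_{h_0}+\delta_{h_0+1}+(r-3)\sum_{i=1}^{\kappa-2}\delta_i\le(2r-3)(\kappa-r)+2r^2-10r+11$, combined with $e(U_2)\ge\frac{(\kappa-r)(\kappa-r+1)}{2}$ and Lemma~\ref{lemma17}, gives a quadratic inequality whose solution is $\kappa-r\le\frac{4r-7+\sqrt{32r^2-136r+137}}{2}$; substituting into $n\le 5\kappa-14$ reproduces the threshold exactly. Without carrying these constants through, no contradiction is reached; this computation is the substance of the proof, not an afterthought.

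Two further points. First, your ``awkward case'' of a length-$2$ block capped by a length-$2$ gap is in fact covered by Lemma~\ref{lemma13}: there the gap is $\{h+2,\ldots,h+l\}$, of length $l-1$, so $l=3$ is precisely a length-$2$ gap and no auxiliary transformation is needed --- you misread the parameterization (the same misreading would also corrupt your ``gap length minus one'' bounds if carried through literally). Second, in your digression, $\theta=2$ does not ``read off'' $\delta_r>0$: you only know some $\delta_{j_0}>0$ with $j_0\le r$, and the degree sequence may skip the value $r$ entirely. On the positive side, your observation that $v_{r+1},\ldots,v_{\kappa+1}$ form a clique, giving $e(U_2)\ge\binom{\kappa+1-r}{2}$ directly, is a cleaner route to the paper's lower bound on $e(U_2)$ than the paper's explicit vertex-by-vertex count.
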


\begin{proof}

By Lemma \ref{lemma11}, we $d_{G'}(v_n)\le \kappa-2$. We define a sequence of numbers by recursive methods.

 $$n_0=n, \ \ h_0=d_{G'}(v_{n_0})\le \kappa-2,\ \ l_0=0;$$
  $$n_i=n_{i-1}-\delta_{h_{i-1}}-\delta_{h_{i-1}+1},\ \ h_i=d_{G'}(v_{n_i})\le \kappa-2, \ \ l_i=h_i-h_{i-1}-1\ge 1,$$
    for $i=1, \ldots, t-1$; and
    $$n_t= n_{t-1}-\delta_{h_{t-1}}-\delta_{h_{t-1}+1},\ \ h_t=d_{G'}(v_{n_t})\ge \kappa-1.$$

%Since $G'\neq S_{n,m}$ be a connected threshold graph, .
Suppose that there does not exist any  $1\le h\le \kappa-2$ such that $\delta_h>0$, $\delta_{h+1}=0$ and $\delta_{h+2}>0$.

Since $d_{G'}(v_n)\le \kappa-2$, $h_0=d_{G'}(v_{n_0})\le \kappa-2$ and $h_t=d_{G'}(v_{n_t})\ge \kappa-1$,
we have  $t\ge 1$ and $h_t \le \kappa$. Moreover,
\begin{equation}\label{lemma18-0}
n_t\le \kappa+\delta_\kappa+\delta_{\kappa-1}.
\end{equation}
Further by Lemmas \ref{lemma13} and \ref{lemma14},  we have
  \begin{equation}\label{lemma18-1}
  n_{i-1}-n_i=\sum_{j=h_{i-1}}^{h_i-1}\delta_j\le 2(h_i-h_{i-1})-5, \ \ \ \ \ i=1, \ldots, t.
  \end{equation}
  Hence by $h_t\ge\kappa-1$, $h_0\ge 1$, $t\ge 1$ and (\ref{lemma18-1}), we have
  \begin{equation}\label{lemma18-2}
  n_0-n_t=\sum_{i=1}^t(n_{i-1}-n_i)\le \sum_{i=1}^t[2(h_i-h_{i-1})-5]=2(h_t-h_0)-5t\le 2\kappa-7.
  \end{equation}
In addition,  by $h_t \ge \kappa-1$, (\ref{lemma18-2}) and the definition of $h_i$, we have
\begin{equation}  \label{lemma18-3}
\sum_{i=1}^{\kappa-2}\delta_i\le \sum_{i=1}^{h_t-1}\delta_i=\sum_{i=1}^t(\delta_{h_{i-1}}+\delta_{h_{i-1}+1})=\sum_{i=1}^t(n_{i-1}-n_i)=n_0-n_t\le 2\kappa-7.
\end{equation}
  In particular, by $h_1 \le \kappa$,  Lemmas ~\ref{lemma13} and \ref{lemma14}, we have
  \begin{equation}\label{lemma18-31}
  \delta_{h_0}\le h_1-h_0-1-1\le \kappa-3, \ \ \ \delta_{h_0+1}\le (h_1-h_0-1)-2 \le \kappa-4.
  \end{equation}

 %Let $\Phi_0$ and $\Phi_1$ be definition in Lemma \ref{lemma17}. Then
By (\ref{lemma18-3}), (\ref{lemma18-31})  and $\kappa\ge r+1$, we have

\begin{eqnarray*}
%\Phi_0 &=& \sum_{i=1}^{r-1}(r-i)\delta_i\\
 \sum_{i=1}^{r-1}(r-i)\delta_i
&\le &  (r-1)\delta_1+(r-2)\delta_2+(r-3)\sum_{i=1}^{r-1}\delta_i\\
&= & 2\delta_1+\delta_2+(r-3)\sum_{i=3}^{r-1}\delta_i\\
&\le & 2\delta_{h_0}+\delta_{h_0+1}+(r-3)\sum_{i=1}^{r-1}\delta_i\\
&\le & 2(\kappa-3)+(\kappa-4)+(r-3)\sum_{i=1}^{\kappa-2}\delta_i\\
&\le & 3\kappa-10+(r-3)(2\kappa-7)\\
&=& (2r-3)(\kappa-r)+2r^2-10r+11.
\end{eqnarray*}
 On the other hand, by $r+s\ge \kappa+1$, the edge number of the induced subgraph by $U_2$ is
 \begin{eqnarray*}
 e(U_2) &\geq& 1+2+\ldots +(\theta-1)+\theta(s-\theta)+1+2+\ldots+[\kappa-(r+\theta)]\\
 &=& \frac{\theta(\theta-1)}{2}+\theta s-\theta^2+\frac{(\kappa-r+1-\theta)(\kappa-r-\theta)}{2}\\
 &=&\frac{(\kappa-r)(\kappa-r+1)}{2}+(r+s-\kappa-1)\theta\\
  &\ge & \frac{(\kappa-r)(\kappa-r+1)}{2}.
    \end{eqnarray*}
 By Lemma \ref{lemma17}, we have
$$  \frac{(\kappa-r)(\kappa-r+1)}{2}\le e(U_2)\le (2r-3)(\kappa-r)+2r^2-10r+11,$$
 which implies
  \begin{equation}\label{lemma18-4}
  \kappa-r\le \frac{4r-7+\sqrt{32r^2-136r+137}}{2}.
  \end{equation}
In addition, by Lemma~\ref{lemma13} and \ref{lemma14},  we have $\delta_\kappa\le \kappa-3$ and $\delta_{\kappa-1}\le \kappa-4$.
 Therefore, by (\ref{lemma18-0}), (\ref{lemma18-2}), (\ref{lemma18-31}) and (\ref{lemma18-4}),
 \begin{eqnarray*}
 n&=& (n_0-n_t)+n_t\\
 &\le & 2\kappa-7  +\kappa+\delta_\kappa+\delta_{\kappa-1}\\
 &\le & 5\kappa-14\\
 &\le & \frac{30r-63+5\sqrt{32r^2-136r+137}}{2},
 \end{eqnarray*}
 which contradicts to the condition of $n$.
Hence the assertion  holds.
 \end{proof}

\begin{lemma}\label{lemma19}
$\alpha=\frac{1}{2}$, $m=(r-1)n-\frac{(r-1)r}{2}+3$ and $G'=\widetilde{S}_{n,m}$.
\end{lemma}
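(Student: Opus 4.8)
The overall plan is to push the accumulated structural lemmas until they pin down the entire degree sequence of $G'$, and then identify $G'$ via Lemma~\ref{54}. First I would combine Lemmas~\ref{lemma18} and \ref{lemma15}: Lemma~\ref{lemma18} produces an index $h$ with $\delta_h>0,\ \delta_{h+1}=0,\ \delta_{h+2}>0$, and Lemma~\ref{lemma15} then forces $\alpha=\tfrac12$, $h=\kappa-2$, $\delta_{\kappa-1}=0$, $\delta_\kappa=1$, and $n-\sum_{j=1}^{\kappa-2}\delta_j=\kappa+1$. Using the identity $n=\sum_{j=1}^{\kappa}\delta_j+\kappa$ (noted in the proof of Lemma~\ref{lemma15}) this rewrites as $\sum_{j=1}^{\kappa-2}\delta_j=n-\kappa-1$. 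Thus $\alpha=\tfrac12$ is settled, and the bulk of order $n$ of the low-degree vertices lives among $\delta_1,\dots,\delta_{\kappa-2}$, with the single missing degree $\kappa-1$ sitting just below the lone vertex counted by $\delta_\kappa$.

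Next I would locate this bulk precisely. Because $\delta_{\kappa-2}$ carries a count of order $n$, while Lemma~\ref{lemma12} forbids three consecutive nonzero $\delta$'s and Lemmas~\ref{lemma13}--\ref{lemma14} bound any $\delta$ flanking a gap of length $\ge 2$ by that length, no positive $\delta_j$ can be separated from $\delta_{\kappa-2}$ by a gap (such a gap would be forced so long as to make $\delta_{\kappa-2}$ small); moreover a single-zero gap below $\kappa-2$ would be a second occurrence of the pattern $\delta_i>0,\delta_{i+1}=0,\delta_{i+2}>0$, excluded by Lemma~\ref{lemma15}. Hence $\delta_j=0$ for $j\le\kappa-4$, and the bulk is either concentrated entirely in $\delta_{\kappa-2}$ (case I) or split across a length-two run $\delta_{\kappa-3},\delta_{\kappa-2}$ with $\delta_{\kappa-4}=0$ (case II). Now the edge-count sandwich $\binom{\kappa-r+1}{2}\le e(U_2)\le\sum_{j=1}^{r-1}(r-j)\delta_j$ of Lemmas~\ref{lemma16} and \ref{lemma17} does most of the work: once $\kappa\ge r+2$ the surviving nonzero $\delta$'s of case~I (and of case~II when $\kappa\ge r+3$) sit at degrees $\ge r$, so the right-hand sum vanishes while the left-hand side is $\ge\binom{3}{2}=3$, a contradiction. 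This leaves only $\kappa=r+1$ together with two ``run'' families: $\kappa=r+2$ in case~II, and $\kappa=r+1$ in case~II.

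The main obstacle is eliminating these two run families, since they satisfy all of Lemmas~\ref{lemma11}--\ref{lemma18} yet are not extremal; here the counting is tight and one must genuinely invoke the maximality of $G'$. The plan is to apply a strict transformation across the degree gap at $\kappa-1$: the length-two run $\delta_{\kappa-3},\delta_{\kappa-2}$ supplies exactly the multiplicity ($l\ge1$, or the $k=q+2$ hypothesis with $p>h+1$) that forces strict inequality in Lemma~\ref{tr24} (respectively Lemmas~\ref{trg} and \ref{trg1}), whose equality case requires $l=0$ and $p=h+1=q+3$. This produces a graph $G_1\in\mathcal H_{n,m}$ with $\rho_\alpha(G_1)>\rho_\alpha(G')$, contradicting the choice of $G'$ and ruling out case~II.

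It then remains to read off the answer from $\kappa=r+1$ in case~I. Here $\delta_{\kappa-2}=\delta_{r-1}=n-r-2$ is the only nonzero $\delta$ below degree $r+1$; one computes $s=2$, $\theta=1$, and $e(U_2)=1$, and the edge identities behind Lemma~\ref{lemma17} give $m=(r-1)n-\binom{r}{2}+3$. The degree sequence is thereby forced to be $\big((n-1)^{r-1},(r+1)^3,(r-1)^{n-r-2}\big)$, so by Lemma~\ref{54} $G'$ is the unique threshold graph realizing it, namely $\widetilde S_{n,m}$, which completes the proof.
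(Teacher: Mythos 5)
Your first step (Lemma~\ref{lemma18} plus Lemma~\ref{lemma15} giving $\alpha=\tfrac12$, $\delta_{\kappa-2}>0$, $\delta_{\kappa-1}=0$, $\delta_\kappa=1$ and $\sum_{j\le\kappa-2}\delta_j=n-\kappa-1$) coincides with the paper's, but after that you take a genuinely different and much longer route. The paper's own proof is a short bootstrap: it applies the Transformation $(\kappa+2,\kappa-1;\kappa+1,\kappa)$ to $G'$, observes that the resulting graph $G_2$ is again extremal with $\kappa(G_2)=\kappa-1$ and $\widetilde\delta_{\widetilde\kappa}=3$, and then re-applies Lemmas~\ref{lemma18} and~\ref{lemma15} \emph{to $G_2$}: if $G_2\neq S_{n,m}$ these lemmas would force $\widetilde\delta_{\widetilde\kappa}=1$, a contradiction; hence $G_2=S_{n,m}$, which immediately yields $\widehat a=3$, $m=(r-1)n-\binom{r}{2}+3$, and $G'=\widetilde S_{n,m}$ by undoing the transformation. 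This sidesteps entirely the classification of the $\delta$-profile that occupies your cases I and II. Parts of your route do check out: the exclusion of a single-zero gap below $\kappa-2$ (since Lemma~\ref{lemma15} forces any such pattern to sit at $h=\kappa-2$), the elimination of $\kappa\ge r+2$ in case~I via the sandwich $\binom{\kappa-r+1}{2}\le e(U_2)\le\sum_{j=1}^{r-1}(r-j)\delta_j$, the killing of case~II by Lemma~\ref{tr24} (e.g.\ the Transformation $(\kappa+2+\delta_{\kappa-2},\,\kappa-2;\,\kappa+1,\,\kappa)$ has $k=q+2$ and $p>h+1$, so it strictly increases $\rho_\alpha$, contradicting extremality), and the case~I read-off of the degree sequence $\bigl((n-1)^{r-1},(r+1)^3,(r-1)^{n-r-2}\bigr)$ with Lemma~\ref{54}.

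The genuine gap is the step you dispose of in a parenthesis: ``no positive $\delta_j$ can be separated from $\delta_{\kappa-2}$ by a gap (such a gap would be forced so long as to make $\delta_{\kappa-2}$ small).'' This is not a routine consequence of Lemmas~\ref{lemma13}--\ref{lemma14}; it is exactly the kind of two-sided counting that occupies the entire proof of Lemma~\ref{lemma18}, and the paper's hypothesis on $n$ is calibrated precisely to the constants of that proof (namely $n\le 5\kappa-14$ combined with $\kappa-r\le\frac{4r-7+\sqrt{32r^2-136r+137}}{2}$). If one argues the way your parenthesis suggests --- every block flanking a gap is bounded by the gap length, so under your supposition $n-\kappa-1=\sum_{j\le\kappa-2}\delta_j\le 4\kappa$ roughly, while $\binom{\kappa+1}{2}\le m<rn$ gives $\kappa<\sqrt{2rn}$ --- one only obtains $n\lesssim 50r$, which does \emph{not} contradict the hypothesis $n>\frac{30r-63+5\sqrt{32r^2-136r+137}}{2}\approx 29.1r$. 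To close this step you must rerun the refined bookkeeping of Lemma~\ref{lemma18} inside your supposition: treat $\kappa\le r+1$ separately (where $n\le 5\kappa+O(1)$ already contradicts $n>29r$), and for $\kappa\ge r+2$ recover a block-sum bound of the form $\sum_{i\le r-1}\delta_i\le 2\kappa+O(1)$ on the low-degree blocks so that the edge sandwich again yields $\kappa-r=O(r)$ with constants at least as good as the paper's --- and then verify the final constant still beats the hypothesis. None of this is in your proposal, and it cannot be waved away, because the paper's transformation lemmas only cover the jumps $k=q+1$ and $k=q+2$, so there is no one-shot transformation that directly destroys a far-away low block; the quantitative argument is the only available tool. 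This is the load-bearing step of your route (your step~B presupposes it), so as written the proof is incomplete.
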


 \begin{proof}   By Lemma~\ref{lemma18}, there exists a $1\le h\le \kappa-2$ such that  $\delta_h>0$, $\delta_{h+1}=0$ and $\delta_{h+2}>0$.
 Hence by Lemma \ref{lemma15}, $\alpha=\frac{1}{2}$, $h=\kappa-2$ , $\delta_{\kappa-2}>0$, $\delta_{\kappa-1}=0$ and $\delta_\kappa=1$.
 Let $G_2$ be the connected threshold graph obtained from $G'$ by a Transformation $(\kappa+2, \kappa-1; \kappa+1, \kappa)$ of Definition \ref{def0}. By Lemma \ref{trij}, $\rho_\alpha(G_2)\ge \rho_\alpha(G')$. Note that $G_2\in \mathcal{H}_{n,m}$, which implies that $\rho_\alpha(G_2)\le \rho_\alpha(G')$. Hence  $\rho_\alpha(G_2)= \rho_\alpha(G')$, i.e., $G_2$ is  a connected threshold graph having the maximal $A_\alpha$-spectral radius of  $ \mathcal{H}_{n,m}$  with  $(r-1)n-\frac{r(r-1)}{2}<m\le rn-\frac{r(r+1)}{2}$.  Denote by the adjacency stepwise matrix $A(G_2)=(\widetilde{a}_{ij})$,  $\widetilde{\kappa}=\kappa(G_2)=\max\{j:\widetilde{a}_{j+1,j}=1 \}$ and $\widetilde{\delta}_j=\widetilde{\delta}_j(G')=|\{i: d_{G_2}(v_i)=j, i>j\}|$ for $j=1,\cdots,\widetilde{\kappa}$. Then $\widetilde{\kappa}=\kappa-1$, $\widetilde{\delta}_{\widetilde{\kappa}}=3$ by $\delta_\kappa=1$.

 Suppose that $G_2\neq S_{n,m}$.  Since $G_2$ is  a connected threshold graph having the maximal $A_\alpha$-spectral radius of  $ \mathcal{H}_{n,m}$  with  $(r-1)n-\frac{r(r-1)}{2}<m\le rn-\frac{r(r+1)}{2}$, by Lemma \ref{lemma18}, there exists a $1\le \widetilde{h}\le \widetilde{\kappa}-2$ such that  $\widetilde{\delta}_{\widetilde{h}}>0$, $\widetilde{\delta}_{\widetilde{h}+1}=0$ and $\widetilde{\delta}_{\widetilde{h}+2}>0$. Hence by
  by Lemma \ref{lemma15},  $\widetilde{\delta}_{\widetilde{\kappa}}=1$.  It is impossible.
  Therefore $G_2=S_{n,m}=K_{r-1}\vee (K_{1, \widehat{a}}\cup (n-r-\widehat{a})K_1)$. So $\widetilde{\kappa}=r$ and $\widehat{a}=\widetilde{\delta}_{\widetilde{\kappa}}=3,$  which implies  $e(G_2)=(r-1)n-\frac{r(r-1)}{2}+3$. Then
  $G'=\widetilde{S}_{n,m}$ with  $m=e(G')=(r-1)n-\frac{r(r-1)}{2}+3$.
 Hence we finish the proof of Theorem~\ref{tknc-1}.
\end{proof}

%\begin{proof} Hence we finish the proof of Theorem~\ref{tknc-1}.
%It follows from Lemma \ref{lemma19} that the assertion holds.
%\end{proof}

%\begin{rem}
%For $n+3 \le m \le 2n-3$, if $\alpha \in [\frac{1}{2},1)$, each threshold graph $G$ which is not $S_{n,m}$ contains three vertices of degree at least three. Applying the transformations in Lemmas \ref{trg} repeatedly, we can transform $G$ into $S_{n,m}$.
%\end{rem}

Now we are ready to present
{ \textbf{\mbox{Proof of Theorem \ref{tknc}}}}.
\begin{proof}
Notice that the function $f(r)=\frac{30r-63+5\sqrt{32r^2-136r+137}}{2}$ is an increasing
function with respect to $r$.  It follows from Theorem~\ref{tknc-1}  for $r\ge 3$ and $ (r-1)n-\frac{r(r-1)}{2}< m \le rn-\frac{r(r+1)}{2}$, and
Theorem~\ref{t2n-2c} for $ n-1<m \le rn-\frac{r(r+1)}{2}$ that the assertion holds.
\end{proof}

%\begin{coro}\label{t2n-1}
%Let $n\ge 5$, $m = 2n-1$, and $G'$ be the extremal graph with the maximum $Q$-spectral radius in  $\mathcal{G}_{n,m}$.\\
%{\em (1).} If $n=5$, then $G'\cong S_{n,2n-1}$.\\
%{\em (2).} If $n=6$, then $G' \cong L_{n,2n-1} $.\\
%{\em (3).} If $n=7$, then $G'\cong S_{n,2n-1}$ or $G'\cong L_{n-1,2n-1}\cup K_1$.\\
%{\em (4).} If $n=8$, then $G'\cong K_6 \cup 2K_1$.\\
%{\em (5).} If $n \ge 9$, then $G'\cong S_{n,2n-1}$.
%\end{coro}

%\begin{proof}
%The proof is similar to Corollary \ref{t2n-2}, and we omit the procedure here.
%\end{proof}

\end {document}